\documentclass[11pt]{article}
\usepackage[a4paper, margin=0.9in]{geometry}
\usepackage{float}
\usepackage{amsmath}
\usepackage{amssymb}
\usepackage{graphicx} 
\usepackage{setspace}
\usepackage{xcolor}
\usepackage{algorithm,algpseudocode}
\usepackage{amsthm}
\usepackage{caption}
\usepackage{subcaption}
\usepackage{comment}
\usepackage{booktabs}
\usepackage{multirow}
\usepackage{dsfont}
\usepackage{hyperref}
\hypersetup{
colorlinks=true,
linkcolor=blue,
citecolor=blue,
urlcolor=blue
}

\usepackage{url}

\usepackage{etoolbox}
\makeatletter
\undef{\c@author}
\undef{\theauthor}
\makeatother

\usepackage[american]{babel}

\usepackage{chngcntr}
\counterwithout{figure}{section}  
\counterwithout{table}{section}

\newtheorem{theorem}{Theorem}[section]    
\newtheorem{proposition}[theorem]{Proposition}
        
\newtheorem{assumption}{Assumption}   
\newtheorem{definition}[theorem]{Definition}

\newtheorem{example}[theorem]{Example}

\def\mb{\mathbb}

\def\bs{\boldsymbol}

\def\ind{\mathbb{I}}
\def\Re{\mathbb{R}}

\def\conv{\mathrm{conv}}
\def\diam{\mathrm{diam}}
\def\dist{\mathrm{dist}}

\def\txi{\tilde{\xi}}

\def\A{\mathcal{A}}
\def\C{\mathcal{C}}
\def\I{\mathcal{I}}
\def\J{\mathcal{J}}
\def\M{\mathcal{M}}
\def\N{\mathcal{N}}
\def\T{\mathcal{T}}
\def\U{\mathcal{U}}

\def\X{\mathcal{X}}

\def\kyfan{\mathrm{T}}

\def\0{\bs{0}}
\def\1{\bs{1}}

\usepackage{soul} 

\title{Solving Chance Constrained Programs via a Penalty based Difference of Convex Approach}

\date{March 13, 2026}

\author{Zhiping Li\textsuperscript{1} \quad\quad\quad\quad Nan Jiang\textsuperscript{2}\quad\quad\quad\quad Rujun Jiang\textsuperscript{3, $\dag$} \\
}

\begin{document}
\maketitle

    \begingroup
    \begin{NoHyper}
    \renewcommand\thefootnote{1}\footnotetext{IEDA, HKUST and School of Economics, Fudan University. Email: lizhiping13624@gmail.com}
    \renewcommand\thefootnote{2}\footnotetext{IEDA, HKUST. Email: nanjiang@ust.hk}
    \renewcommand\thefootnote{3}\footnotetext{School of Data Science, Fudan University. Email: rjjiang@fudan.edu.cn}
    \renewcommand\thefootnote{$\dag$}
    \footnotetext{Corresponding author.}
    \end{NoHyper}
    \endgroup

	\onehalfspacing

\begin{abstract}
We develop two penalty based difference of convex (DC) algorithms for solving chance constrained programs. First, leveraging a rank-based DC decomposition of the chance constraint, we propose a proximal penalty based DC algorithm in the primal space that does not require a feasible initialization. Second, to improve numerical stability in the general nonlinear settings, we derive an equivalent lifted formulation with complementary constraints and show that, after minimizing primal variables, the penalized lifted problem admits a tractable DC structure in the dual space over a simple polyhedron. We then develop a penalty based DC algorithm in the lifted space with a finite termination guarantee. We establish exact penalty and stationarity guarantees under mild constraint qualifications
and identify the relationship of the local minimizers between the two formulations. Numerical experiments demonstrate the efficiency and effectiveness of our proposed methods compared with state-of-the-art benchmarks.
\end{abstract}

\section{Introduction}

As a powerful tool for addressing uncertainty in decision-making, the chance constrained program (CCP) has been extensively studied in the literature. It has been widely applied across fields such as finance, energy systems, and supply chain management~\cite{ahmed2008solving,kuccukyavuz2022chance}. In general, a chance constrained program can be written as follows:
\begin{equation} \label{p:ccp}
\min_{\bs{x} \in \X} \left\{ f(\bs{x}): \Pr\{g(\bs{x}, \txi) \leq 0\} \geq 1-\alpha \right\}, 
\end{equation}
where $\txi$ is a random vector supported on $\Xi\subseteq\Re^{d_\xi}$ and $\alpha\in(0,1)$ is a prescribed violation risk level. The feasible region $\X$ is a deterministic set contained in an open set $\U\subseteq\Re^d$, the functions $f:\U\to\Re$, $g:\U\times\Xi \to \Re$ are real-valued functions. We use $\txi$ to denote the random vector and $\xi$ as its realization. The goal of Problem~\eqref{p:ccp} is to minimize the objective function $f(\bs{x})$ subject to the requirement that the constraint $g(\bs{x}, \txi) \leq 0$ be satisfied with probability at least $(1-\alpha)$. 

Problem~\eqref{p:ccp} is generally difficult to solve, especially when the distribution information of the random variable $\txi$ is unknown~\cite{ahmed2013probabilistic}. First, verifying whether a candidate solution satisfies the chance constraint can be computationally challenging. Moreover, the feasible region of a CCP is generally nonconvex even when $\X$ is convex, implying that finding an optimal solution with a provable guarantee can be elusive. 

Given the challenges mentioned above, there are two major strategies to address the CCP. The first strategy uses tractable conservative approximations, where the CCP is formulated as a convex optimization problem that can be efficiently solved and yields a feasible solution to the original CCP~\cite{rockafellar2002conditional, ahmed2018relaxations,cheung2012linear}. The second approach is the Sample Average Approximation (SAA), also sometimes known as the scenario approach, and has been extensively studied in \cite{ahmed2008solving, luedtke2008sample, nemirovski2007convex}. It is a popular method to approximately solve Problem~\eqref{p:ccp}, especially when the distribution of $\txi$ is unknown, and we can only access a sample of $S$ i.i.d.~realizations $\{\xi^s\}^S_{s=1}$. 

In this paper, we consider the SAA approximation of Problem~\eqref{p:ccp} as follows:

\begin{equation}\label{p:ccp-saa} 
v^* = \min_{\bs x\in\X}\left\{ f(\bs x): \frac{1}{S} \sum^S_{s=1}\ind\{g(\bs x, \xi^s) \leq 0\} \geq 1 - \alpha \right\},
\end{equation}
where the indicator function $\ind\{g(\bs x, \xi^s) \leq 0\} = 1$ if $g(\bs x, \xi^s) \leq 0$ and $0$ otherwise. Moreover, we focus on the convex setting and make the following assumptions. 

\begin{assumption}\label{ass:X-convex}
The feasible region $\X\subseteq \Re^d$ is compact, convex, and has a nonempty relative interior.
\end{assumption}

\begin{assumption}\label{ass:f-g-convex}
The function $f: \U \to \Re$ is continuously differentiable and convex. The function $g:\U \times \Xi \to \Re$ takes the form $g(\bs{x}, \xi) = \max_{i \in [I]} h_i(\bs{x}, \xi)$, where $h_i : \Re^d \times \Xi \to \Re$ is continuously differentiable and convex in $\bs{x}$ for every $\xi \in \Xi$ and $i \in [I]$. Moreover, $\U \subseteq\Re^d$ is an open set that contains $\X$.
\end{assumption}

Assumptions~\ref{ass:X-convex} and~\ref{ass:f-g-convex} are common in the existing literature~\cite{wang2025proximal, jiang2022also, xie2020bicriteria, bai2021augmented} and cover several common settings, e.g., the objective function $f(\bs{x})$ is linear, and the feasible region $\X$ is a polyhedron~\cite{luedtke2010integer}. Note that Assumptions~\ref{ass:X-convex} and ~\ref{ass:f-g-convex} imply that $\nabla f(\bs{x})$ is bounded on $\X$ and thus $f$ is Lipschitz continuous on $\X$. That is, there exists $L_f > 0$ such that 
\[
|f(\bs{x}) - f(\bs{x}')| \leq L_f \|\bs{x} - \bs{x}'\|, \quad \forall \bs{x}, \bs{x}' \in \X.
\]
Problem~\eqref{p:ccp-saa} is known as a single chance constrained program if $I=1$ and a joint chance constrained program otherwise. 
Other assumptions will be introduced if needed. Given a sample of $S$ realizations $\{\xi^s\}^S_{s=1}$, we use $g_s(\bs{x})$ to denote $g(\bs{x}, \xi^s)$ and use $h_{i, s}(\bs{x})$ to denote $h_i(\bs{x}, \xi^s)$ for all $s\in[S]$ and $i \in [I]$ in the rest of the paper.

\subsection{Relevant Literature}

\noindent\textbf{Chance Constrained Programs.}
Chance constrained programming has a long history as a framework for incorporating uncertainty directly into decision models. Since its early development \cite{charnes1958cost,charnes1963deterministic}, it has become a central tool in applications where decisions must remain feasible with high probability. In energy systems, for instance, chance constrained programs provide a mechanism for coping with operational randomness such as renewable generation variability, fluctuating demand profiles, and unexpected network contingencies \cite{cho2023exact,porras2024unifying}. Such models enable system operators to enforce reliability targets while acknowledging the stochastic nature of real-world power systems. Similar ideas appear in financial decision-making, where probabilistic constraints are used to balance portfolio performance against the risk generated by volatile market conditions \cite{deng2021scenario}. By controlling the likelihood of unacceptable losses, these models support investment strategies that remain robust in uncertain environments. In logistics and supply chain management \cite{dinh2018exact,ghosal2020distributionally}, chance constraints play a comparable role: they allow planners to ensure service, routing, or inventory requirements are satisfied with prescribed confidence levels despite uncertain demand or transportation disruptions. We refer interested readers to \cite{ahmed2008solving,kuccukyavuz2022chance} for a comprehensive review.

A central difficulty in working with chance constrained programs \eqref{p:ccp} is that the feasible region is typically nonconvex, which limits the applicability of standard convex optimization tools. To address this issue, a commonly used strategy is the sample average approximation (SAA), in which the probabilistic constraint is replaced with a finite set of scenarios. This yields a deterministic mixed-integer formulation that can be solved by modern mixed-integer optimization solvers. While this provides a viable computational framework, its practical applicability is generally limited to instances of moderate scale \cite{luedtke2008sample,ruszczynski2002probabilistic}. Another line of research focuses on constructing convex inner approximations of the original chance constraint \cite{nemirovski2007convex}. These methods replace the probabilistic requirement with a tractable approximation that can be handled using standard convex optimization techniques. A commonly used example is the conditional value-at-risk (CVaR) reformulation, which provides a convex upper bound on the probability of violation. While this approximation preserves feasibility with respect to the chance constraint, it can be conservative and generally does not recover the optimal objective value of the original problem.
More recent developments aim to mitigate the conservatism of single-shot convex approximations by employing iterative refinement procedures \cite{jiang2022also,jiang2025also}. The ALSO-X method is a representative example of this direction. When the deterministic feasible region $\X$ is convex, it has been shown that ALSO-X can yield solutions that improve upon those obtained from the CVaR approximation, due to its iterative updates. 

Building on ALSO-X, an enhanced variant, ALSO-X+, has been proposed to improve solution quality further. In numerical experiments, ALSO-X+ has been observed to achieve better objective values than ALSO-X and CVaR approximation; see, for example, \cite{xu2025nonparametric,wen2024stochastic,sun2025network}. At the same time, these works also document that the computational time required by ALSO-X+ is substantially higher than that of ALSO-X and the CVaR approximation, especially on larger instances. A further limitation is that the theoretical properties of the solutions produced by ALSO-X+ are not fully understood. In particular, current analyses in \cite{jiang2022also} do not establish convergence to a stationary point of the underlying chance constrained problem. Hence, the quality of the limiting solutions lacks a rigorous optimality guarantee.

\noindent\textbf{Difference of Convex (DC) Program.} 
DC programs are optimization problems that aim to minimize a DC function subject to DC constraints. DC programs play an important role in nonconvex optimization and have been extensively studied for decades~\cite{horst1999dc, tao1997convex}. One of the most important methods for solving DC programs is the DC algorithm (DCA)~\cite{hong2011sequential, lu2012sequential}, which solves a sequence of convex subproblems by replacing the concave part with its first-order approximation at the current iterate. Recently, \cite{wang2025proximal} proposed a DCA to solve the SAA of chance constrained programs, with subproblems that are easy to implement and are amenable to the off-the-shelf solvers. Another popular method for solving the DC program is exact penalty~\cite{lipp2016variations,gotoh2018dc, jara2018study}, which penalizes the DC constraint in the objective function and solves a sequence of DC programs using DCA with increasing penalty parameters. Compared with the DCA, the exact penalty method does not require an initial feasible solution. It allows constraints to be violated in the initial updates, which can sometimes lead to a lower objective value~\cite{lipp2016variations}. 

\subsection{Summary of Contributions}

In this paper, we study the penalty based DC algorithms for solving chance constrained programs. Our contributions can be summarized as follows:
\begin{itemize}
\item[(i)]
Building on the rank-based DC decomposition of the SAA chance constraint proposed in~\cite{wang2025proximal}, we first propose a penalty based DC method in the primal space. The method does not require an initial feasible solution and can alleviate over-conservatism caused by infeasible subproblems. This algorithm improves both solution quality and running time in the polyhedral setting, where both $\X$ and $g$ are polyhedral, but exhibits pronounced oscillations in the nonlinear setting.

\item[(ii)]
To mitigate the aforementioned oscillations that may arise in the primal space under nonlinear settings, we derive an equivalent lifted formulation with complementary constraints, inspired by the Toland duality framework for DC programs~\cite{toland1978duality}. This leads to a tractable DC structure over a simple polyhedron and yields a proximal DC algorithm in the lifted space. The subproblem admits finite termination, is amenable to effective warm starts, and exhibits substantially improved numerical stability.

\item[(iii)]
We establish an exact penalty for both the primal and lifted formulations under mild constraint qualification assumptions, including global exactness, local exactness for the primal formulation, and equivalence between strongly stationary points of the lifted formulation and stationary points of the penalized counterpart. We further analyze the relationship between lifted and primal local minimizers, identify the possibility of spurious lifted local minimizers, and provide a sufficient condition under which such spurious solutions are ruled out.

\item[(iv)]
Numerical experiments demonstrate that the proposed methods substantially reduce over-conservatism. Moreover, to the best of our knowledge, the algorithm in the lifted space remains computationally efficient and outperforms state-of-the-art baselines in both solution quality and runtime.
\end{itemize}

\noindent\textbf{Organization.}
The remainder of the paper is organized as follows. In Section~\ref{sec:alg}, we develop the algorithmic framework: we first review the DC decomposition of the chance constraint and the exact penalty framework in the primal space. Then we derive an equivalent lifted formulation with complementary constraints and the algorithm in the lifted space. Section~\ref{sec:exact-penalty-framework} establishes the exact penalty and stationarity guarantees for both formulations, including global and local results. Section~\ref{sec:iden-localmin} investigates the relationship between local minimizers of the lifted and primal formulations, characterizes when spurious lifted local minimizers may arise, and provides conditions ensuring that lifted local optimality implies primal local optimality. Section~\ref{sec:numerical} reports numerical experiments on real and synthetic instances and compares against state-of-the-art baselines. In Section~\ref{sec:conclusion}, we conclude the paper and provide some future directions.

\subsection{Notations and Preliminaries}
\label{sec:priliminaries}
We first introduce the notation used throughout the paper.
For a positive integer $n$, we write $[n]:=\{1,2,\ldots,n\}$. We use $\Re^n_+$ to denote the nonnegative orthant. For a vector $\bs{x}\in\Re^d$, $\|\bs{x}\|$ denotes the Euclidean norm and $\langle \cdot,\cdot\rangle$ denotes the standard inner product. We use $\|\bs{x}\|_1$ to denote the $\ell_1$-norm and $\|\bs{x}\|_0$ to denote the $\ell_0$ pseudo norm, i.e., the number of nonzero elements. For a scalar $t\in\Re$, define $[t]_+:=\max\{t,0\}$. For $\bar{\bs{x}}\in\Re^d$ and $\varepsilon>0$, $B(\bar{\bs{x}},\varepsilon):=\{\bs{x}\in\Re^d:\ \|\bs{x}-\bar{\bs{x}}\|<\varepsilon\}$ denotes the open neighborhood with radius $\varepsilon$. Given a set $\mathcal{S}\subseteq\Re^d$, the indicator function is
$ \delta_{\mathcal{S}}(\bs{x})
:= \begin{cases}
0, & \bs{x}\in\mathcal{S},\\
+\infty, & \bs{x}\notin\mathcal{S},
\end{cases}$
and the Euclidean projection operator is
$
\Pi_{\mathcal{S}}(\bs{x})\ \in\ \arg\min_{\bs{y}\in\mathcal{S}}\ \|\bs{x}-\bs{y}\|.
$
Note that the projection is unique if $\mathcal{S}$ is convex.
The distance from a point to a set is
$\dist(\bs{x},\mathcal{S}):=\inf\{\|\bs{x}-\bs{y}\|:\ \bs{y}\in\mathcal{S}\}$, and $\diam(\mathcal{S}):=\sup\{\|\bs{x}-\bs{y}\|:\ \bs{x},\bs{y}\in\mathcal{S}\}$. For a closed convex set $\X$, $\T_{\X}(\bar{\bs{x}})$ and $\N_{\X}(\bar{\bs{x}})$ denote the tangent cone and normal cone at $\bar{\bs{x}}\in\X$.

For an extended-real-valued function $f:\Re^d\to\Re\cup\{+\infty\}$, the  directional derivative at $\bar{\bs{x}}$ along $\bs{d}$ is
\[
f'(\bar{\bs{x}};\bs{d})\ :=\ \lim_{t\to 0}\ \frac{f(\bar{\bs{x}}+t\bs{d})-f(\bar{\bs{x}})}{t}.
\]
If $f$ is locally Lipschitz, its Clarke directional derivative is
\[
f^\circ(\bar{\bs{x}};\bs{d})\ :=\ \limsup_{t\downarrow 0}\ \frac{f(\bs{x}+t\bs{d})-f(\bs{x})}{t},
\]
and its Clarke subdifferential is
\[
\partial f(\bar{\bs{x}})
\ :=\ 
\Bigl\{\bs{v}\in\Re^d:\ \langle \bs{v},\bs{d}\rangle\le f^\circ(\bar{\bs{x}};\bs{d})\ \ \forall \bs{d}\in\Re^d\Bigr\}.
\]
When $f$ is convex, the Clarke subdifferential coincides with the usual convex subdifferential, and $f^\circ(\bar{\bs{x}};\bs{d})=f'(\bar{\bs{x}};\bs{d})$. For a closed proper convex function $f$, its Fenchel conjugate is
\[
f^*(\bs{p})\ :=\ \sup_{\bs{x}\in\Re^d}\ \bigl\{\langle \bs{p},\bs{x}\rangle-f(\bs{x})\bigr\}.
\]
For a DC program in the form $\min_{\bs{x}}\{\phi(\bs{x})-\psi(\bs{x})\}$ with $\phi,\psi$ closed proper convex, its Toland dual~\cite{toland1978duality} is $\min_{\bs{p}}\ \{\psi^*(\bs{p})-\phi^*(\bs{p})\}$, which is also a DC program. A DC program is called a polyhedral DC program if either $\phi$ or $\psi$ is polyhedral, i.e., the epigraph is a polyhedron. The polyhedral DC program has some interesting properties on local optimality and finite convergence of the DC algorithm. The reader may refer to~\cite{tao1997convex} for a detailed review.

For a vector $\bs{y}\in\Re^S$, let $y_{(1)}\le\cdots\le y_{(S)}$ denote the nondecreasing rearrangement of $(y_1,\ldots,y_S)$ and $|y|_{(1)}\le\cdots\le |y|_{(S)}$ denote the nondecreasing rearrangement of $(|y_1|,\ldots,|y_S|)$. The sum of the $r$ largest components is defined as $\kyfan_r(\bs{y}) = \sum_{i=S-r+1}^S y_{(i)}$. The Ky-Fan norm is defined as $\|\bs{y}\|_{(r)}\ :=\ \sum_{i=S-r+1}^S |y|_{(i)}$. 

Next, we list some classical results on exact penalization of constrained optimization problems. To make the paper self-contained, we provide the proofs in Appendix~\ref{sec:proofs}. The reader may also refer to~\cite{burke1991exact} for a detailed review. Consider the single constrained problem
\begin{equation} \label{p:prelim-constrained}
\min_{\bs{x} \in \Re^d} \left\{ f(\bs{x}) : g(\bs{x}) \leq 0, \; \bs{x} \in \X\right\},
\end{equation}
where $f,g:\Re^d\to\Re$ are locally Lipschitz and $\X\subseteq\Re^d$ is convex and compact.
Denote the feasible set by
\[
\mathcal{F}:=\{\bs{x}\in\X: g(\bs{x})\le 0\}.
\]

Let $g^\circ(\bar{\bs{x}};\bs{d})$ be the Clarke directional derivative of $g$ at $\bar{\bs{x}}$ along $\bs{d}$.
We say that the generalized Mangasarian--Fromovitz constraint qualification (GMFCQ) holds at a boundary feasible point $\bar{\bs{x}}\in\mathcal{F}$ with $g(\bar{\bs{x}})=0$ if there exists a direction $\bs{d}\in\T_{\X}(\bar{\bs{x}})$ such that
\begin{equation} \label{eq:prelim-gmfcq}
g^\circ(\bar{\bs{x}};\bs{d})\;<\;0.
\end{equation}

Let $\mathcal{G}(\bs{x}):=g(\bs{x})+\Re_+ = \{t + g(\bs{x}):t \geq 0\}$ for $\bs{x}\in\X$ and $\mathcal{G}^{-1}(y) = \{\bs{x} \in \X:y \in \mathcal{G}(\bs{x})\}$ for $y \in \Re$. Under the GMFCQ, the feasible region $\mathcal{F}$ can be shown to be metrically regular~\cite[Definition 2.2]{burke1991exact} around $\bar{\bs{x}}$. That is, there exists $\kappa, \epsilon > 0$ such that 
\[
\dist(\bs{x}, \mathcal{G}^{-1}(y)) \leq \kappa~\dist(y, \mathcal{G}(\bs{x})), \qquad \forall \bs{x} \in \X~\cap B(\bar{\bs{x}}, \epsilon), y \in B(0, \epsilon).
\]
We state this in the following proposition.

\begin{proposition} \label{prop:prelim-gmfcq-mr}
Suppose GMFCQ~\eqref{eq:prelim-gmfcq} holds at $\bar{\bs{x}}$. Define $\mathcal{G}(\bs{x}):=g(\bs{x})+\Re_+$ for $\bs{x}\in\X$. Then $\mathcal{F}$ is metrically regular at $\bar{\bs{x}}$. Consequently, there exist $\kappa>0$ and $\varepsilon>0$ such that the local linear error bound
\begin{equation}\label{eq:prelim-error-bound}
\dist(\bs{x},\mathcal{F})
\ \le\ \kappa\,[g(\bs{x})]_+,
\qquad \forall\,\bs{x}\in \X\cap B(\bar{\bs{x}},\varepsilon)
\end{equation}
holds.
\end{proposition}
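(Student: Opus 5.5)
The plan is to prove the error bound \eqref{eq:prelim-error-bound} directly, by moving from an infeasible point along the GMFCQ direction, rather than by first establishing metric regularity in the abstract; metric regularity then follows from the same estimate with $y$ in place of $0$. Fix $\bs{d}\in\T_{\X}(\bar{\bs{x}})$ with $g^\circ(\bar{\bs{x}};\bs{d})<0$. A tangent direction need not keep us inside $\X$, so we first replace $\bs{d}$ by a feasible direction. Since $\X$ is convex with nonempty relative interior, we take $\bs{d}'=\bs{z}-\bar{\bs{x}}$ with $\bs{z}\in\X$ close to $\bar{\bs{x}}+t\bs{d}$, rescaled so that $\bs{d}'$ is close to $\bs{d}$. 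The function $\bs{v}\mapsto g^\circ(\bar{\bs{x}};\bs{v})$ is Lipschitz, hence $g^\circ(\bar{\bs{x}};\bs{d}')<0$ still holds. For this $\bs{d}'$ we have $\bar{\bs{x}}+\bs{d}'\in\X$.

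Next we turn the pointwise Clarke estimate into a uniform decrease estimate near $\bar{\bs{x}}$. By upper semicontinuity of the Clarke derivative, i.e.\ the $\limsup$ definition, there are $\delta,\eta>0$ and $\bar t>0$ such that
\[
g(\bs{x}+t\bs{w})-g(\bs{x})\le -\eta t \qquad \text{for all } \bs{x}\in B(\bar{\bs{x}},\delta),\ t\in(0,\bar t],\ \|\bs{w}-\bs{d}'\|\le\delta.
\]
The key technical point is feasibility with respect to $\X$. For $\bs{x}\in\X$ near $\bar{\bs{x}}$ we use the direction $\bs{w}_{\bs{x}}=(\bar{\bs{x}}+\bs{d}')-\bs{x}$, which tends to $\bs{d}'$ as $\bs{x}\to\bar{\bs{x}}$. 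By convexity, $\bs{x}+t\bs{w}_{\bs{x}}\in\X$ for $t\in[0,1]$, so the moves stay in $\X$ and the uniform estimate applies.

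Now take $\bs{x}\in\X\cap B(\bar{\bs{x}},\varepsilon)$ with $g(\bs{x})>0$; points with $g(\bs{x})\le 0$ are trivial. Set $t=g(\bs{x})/\eta$. Continuity of $g$ and $g(\bar{\bs{x}})=0$ make $g(\bs{x})$ small, so $t\le\bar t$ provided $\varepsilon$ is small. Then the point $\bs{x}+t\bs{w}_{\bs{x}}$ lies in $\X$ and satisfies
\[
g(\bs{x}+t\bs{w}_{\bs{x}})\le g(\bs{x})-\eta t=0.
\]
Hence
\[
\dist(\bs{x},\mathcal{F})\le t\|\bs{w}_{\bs{x}}\|\le \frac{\|\bs{d}'\|+\delta}{\eta}\,[g(\bs{x})]_+,
\]
which gives \eqref{eq:prelim-error-bound} with $\kappa=(\|\bs{d}'\|+\delta)/\eta$.

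For the metric regularity of $\mathcal{G}$ itself, the same argument applied to $g-y$ for small $|y|$ gives $\dist(\bs{x},\mathcal{G}^{-1}(y))\le\kappa[g(\bs{x})-y]_+=\kappa\,\dist(y,\mathcal{G}(\bs{x}))$. The constants are unchanged because the decrease estimate does not involve $y$; shrinking $\varepsilon$ keeps $t$ within $\bar t$. The main obstacle is the uniformity step, obtaining a single $\eta$ for all base points near $\bar{\bs{x}}$ and all perturbed directions while keeping the iterates in $\X$. I would handle it by a contradiction argument on the $\limsup$ definition together with the Lipschitz continuity of $g$ in the direction variable.
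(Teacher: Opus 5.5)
Your proof is correct, and it takes a genuinely different route from the paper. The paper argues dually. It first shows that GMFCQ implies $0\notin\partial g(\bar{\bs{x}})+\N_{\X}(\bar{\bs{x}})$: if $\bs{v}+\bs{z}=\bs{0}$ with $\bs{z}\in\N_{\X}(\bar{\bs{x}})$, then $\langle\bs{v},\bs{d}\rangle\ge 0$, contradicting $\max_{\bs{v}\in\partial g(\bar{\bs{x}})}\langle\bs{v},\bs{d}\rangle<0$. It then identifies this as the regularity condition of Burke/Borwein and invokes \cite[Thm.~2.4]{burke1991exact} to obtain metric regularity of $\mathcal{G}$; the error bound is the case $y=0$.

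You instead work in the primal. You turn the tangent direction into a feasible one and extract a uniform decrease rate $\eta$ from the $\limsup$ definition plus local Lipschitz continuity of $g$. You then move from an infeasible $\bs{x}$ along the segment toward $\bar{\bs{x}}+\bs{d}'$ by a step proportional to $[g(\bs{x})]_+$.

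What your route buys: it is self-contained and needs only convexity of $\X$ and local Lipschitz continuity of $g$. It gives an explicit constant $\kappa=(\|\bs{d}'\|+\delta)/\eta$, and it yields the full metric-regularity estimate, uniform in $y$, with no extra work. What the paper's route buys: it is shorter, and the dual condition is the form that extends to several constraints and matches the Clarke-subdifferential formulation of Assumption~\ref{ass:mfcq-dc}. It does, however, rest on checking the hypotheses of an external theorem.

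A few small points to tidy up.
\begin{itemize}
\item The rescaled direction $(\bs{z}-\bar{\bs{x}})/t$ need not satisfy $\bar{\bs{x}}+\bs{d}'\in\X$. Use positive homogeneity of $g^\circ(\bar{\bs{x}};\cdot)$ to replace it by $\bs{z}-\bar{\bs{x}}$ itself, whose Clarke derivative is still negative.
\item The relative-interior hypothesis is unnecessary. Convexity alone gives $\T_{\X}(\bar{\bs{x}})=\mathrm{cl}\,\mathrm{cone}(\X-\bar{\bs{x}})$.
\item The segment argument needs $t\le 1$, so take $\bar t\le 1$.
\item The uniformity step needs no contradiction argument. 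Let $c:=g^\circ(\bar{\bs{x}};\bs{d}')<0$. The standard Clarke definition, with the base point varying ($\limsup_{\bs{x}\to\bar{\bs{x}},\,t\downarrow 0}$), directly gives $g(\bs{x}+t\bs{d}')-g(\bs{x})\le (c/2)\,t$ for $(\bs{x},t)$ near $(\bar{\bs{x}},0)$. The bound $|g(\bs{x}+t\bs{w})-g(\bs{x}+t\bs{d}')|\le Lt\|\bs{w}-\bs{d}'\|$ then finishes it with $\eta=|c|/4$ and $\delta\le |c|/(4L)$.
\item The paper's displayed formula for the Clarke derivative drops the moving base point. Your argument (like the paper's appeal to Clarke's Prop.~2.1.2) needs the standard version, so state that you use it.
\end{itemize}
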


The error bound~\eqref{eq:prelim-error-bound} together with the Lipschitz continuity of $f$ implies local exact penalization. Similar proofs can be found in~\cite{le2012exact,chen2024penalty, luo1996exact}.

\begin{theorem}[Local exact penalty] \label{thm:prelim-local-exact-penalty}
Suppose $f$ is Lipschitz continuous on the compact and convex set $\X$ with constant $L_f$, and that the local error bound~\eqref{eq:prelim-error-bound} holds at some feasible point $\bar{\bs{x}}\in\mathcal{F}$. For $\sigma>0$, consider the $\ell_1$-penalty problem
\begin{equation}\label{p:prelim-penalty}
\min_{\bs{x}\in\X}\ \bigl\{ f(\bs{x})+\sigma[g(\bs{x})]_+\bigr\}.
\end{equation}
Then the following statements are equivalent:
\begin{itemize}
\item[(i)] For any $\sigma>0$, if $\bar{\bs{x}}$ is a local minimizer of the penalty problem~\eqref{p:prelim-penalty} and satisfies $g(\bar{\bs{x}})\le 0$, then $\bar{\bs{x}}$ is a local minimizer of the constrained problem~\eqref{p:prelim-constrained}.
\item[(ii)] If $\bar{\bs{x}}$ is a local minimizer of the constrained problem~\eqref{p:prelim-constrained}, then there exists $\bar\sigma>0$ such that, for every $\sigma>\bar\sigma$, $\bar{\bs{x}}$ is a local minimizer of the penalty problem~\eqref{p:prelim-penalty}.
\end{itemize}
\end{theorem}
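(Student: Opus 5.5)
Although the theorem is phrased as an equivalence, my plan is to prove that each of (i) and (ii) holds on its own under the stated hypotheses; this gives the equivalence at once. Part (i) is elementary and does not use the error bound. Part (ii) is the real content: it combines the local error bound~\eqref{eq:prelim-error-bound} from Proposition~\ref{prop:prelim-gmfcq-mr} with the Lipschitz continuity of $f$ on $\X$, by projecting onto $\mathcal{F}$.

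For (i), let $\bar{\bs{x}}$ be a local minimizer of~\eqref{p:prelim-penalty} on $\X\cap B(\bar{\bs{x}},\delta)$ with $g(\bar{\bs{x}})\le 0$. Any $\bs{x}\in\mathcal{F}\cap B(\bar{\bs{x}},\delta)$ has $[g(\bs{x})]_+=0$. Hence
\[
f(\bs{x}) = f(\bs{x})+\sigma[g(\bs{x})]_+ \ \ge\ f(\bar{\bs{x}})+\sigma[g(\bar{\bs{x}})]_+ = f(\bar{\bs{x}}),
\]
so $\bar{\bs{x}}$ is a local minimizer of~\eqref{p:prelim-constrained}.

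For (ii), the steps are as follows.
\begin{itemize}
\item[(a)] Let $\bar{\bs{x}}$ be a local minimizer of~\eqref{p:prelim-constrained}: $f(\bs{y})\ge f(\bar{\bs{x}})$ for all $\bs{y}\in\mathcal{F}\cap B(\bar{\bs{x}},\delta)$. Let $\kappa,\varepsilon$ be the constants in~\eqref{eq:prelim-error-bound}, and set $\bar\sigma:=\kappa L_f$.
\item[(b)] Note that $\mathcal{F}$ is nonempty (it contains $\bar{\bs{x}}$) and compact, since $g$ is continuous and $\X$ is compact. So for each $\bs{x}\in\X$ there is a point $\bs{y}\in\mathcal{F}$ with $\|\bs{x}-\bs{y}\|=\dist(\bs{x},\mathcal{F})$. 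Because $\mathcal{F}$ need not be convex, $\bs{y}$ need not be unique; any such choice works.
\item[(c)] Take $\bs{x}\in\X\cap B(\bar{\bs{x}},r)$ with $r:=\min\{\varepsilon,\delta/2\}$. Since $\bar{\bs{x}}\in\mathcal{F}$, we have $\|\bs{x}-\bs{y}\|\le\|\bs{x}-\bar{\bs{x}}\|$, and therefore $\|\bs{y}-\bar{\bs{x}}\|\le 2\|\bs{x}-\bar{\bs{x}}\|<\delta$. Thus $f(\bs{y})\ge f(\bar{\bs{x}})$.
\item[(d)] Both $\bs{x}$ and $\bs{y}$ lie in $\X$, so the Lipschitz bound applies. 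Together with~\eqref{eq:prelim-error-bound}, for every $\sigma>\bar\sigma$,
\[
f(\bs{x})+\sigma[g(\bs{x})]_+\ \ge\ f(\bs{y})-L_f\,\dist(\bs{x},\mathcal{F})+\sigma[g(\bs{x})]_+\ \ge\ f(\bar{\bs{x}})+(\sigma-\kappa L_f)[g(\bs{x})]_+\ \ge\ f(\bar{\bs{x}}).
\]
\item[(e)] The right-hand side $f(\bar{\bs{x}})$ equals the penalty objective at $\bar{\bs{x}}$, because $g(\bar{\bs{x}})\le 0$. Hence $\bar{\bs{x}}$ is a local minimizer of~\eqref{p:prelim-penalty} on $\X\cap B(\bar{\bs{x}},r)$ for every $\sigma>\bar\sigma$.
\end{itemize}

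The only delicate point is step (c): the projection $\bs{y}$ must stay inside the ball where $\bar{\bs{x}}$ is locally optimal. The factor-$2$ estimate $\|\bs{y}-\bar{\bs{x}}\|\le 2\|\bs{x}-\bar{\bs{x}}\|$ settles this, and the radius is shrunk to $\min\{\varepsilon,\delta/2\}$ so that the error bound and local optimality hold simultaneously. Otherwise the argument is the standard Clarke-type exact penalty argument.
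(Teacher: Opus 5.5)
Your proof is correct and follows the paper's argument: for (i) you restrict the penalty inequality to $\mathcal{F}$, and for (ii) you take a nearest point $\bs{y}\in\mathcal{F}$, combine the Lipschitz continuity of $f$ with the error bound, and set $\bar\sigma=L_f\kappa$. Your radius $\min\{\varepsilon,\delta/2\}$ together with the estimate $\|\bs{y}-\bar{\bs{x}}\|\le 2\|\bs{x}-\bar{\bs{x}}\|$ makes precise the paper's ``for all $\bs{x}$ sufficiently close to $\bar{\bs{x}}$'' step, and your compactness remark justifies the existence of the nearest point, which the paper takes for granted.
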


When the constrained system admits a global linear error bound, i.e., 
\begin{equation}\label{eq:prelim-global-error-bound}
\dist(\bs{x},\mathcal{F}) \;\le\; \kappa\,[g(\bs{x})]_+,
\qquad \forall\,\bs{x}\in \X,
\end{equation}
the same argument as in Theorem~\ref{thm:prelim-local-exact-penalty} yields a global exact penalization result. Since the proof is analogous, we only state the theorem and omit its proof.

\begin{theorem}[Global exact penalty] \label{thm:prelim-global-exact-penalty}
Suppose that $f$ is Lipschitz continuous on the compact and convex set $\X$ with constant $L_f$, and that the global error bound~\eqref{eq:prelim-global-error-bound} holds.
Then the following statements are equivalent:
\begin{itemize}
\item[(i)] For any $\sigma>0$, if $\bar{\bs{x}}$ is a global minimizer of the penalty problem~\eqref{p:prelim-penalty} and satisfies $g(\bar{\bs{x}})\le 0$, then $\bar{\bs{x}}$ is a global minimizer of the constrained problem~\eqref{p:prelim-constrained}.
\item[(ii)] If $\bar{\bs{x}}$ is a global minimizer of the constrained problem~\eqref{p:prelim-constrained}, then there exists $\bar{\sigma}>0$ such that for every $\sigma>\bar\sigma$ the point $\bar{\bs{x}}$ is a global minimizer of the penalty problem~\eqref{p:prelim-penalty}.
\end{itemize}
\end{theorem}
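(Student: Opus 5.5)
We prove both implications by mirroring the local argument of Theorem~\ref{thm:prelim-local-exact-penalty}. The global error bound~\eqref{eq:prelim-global-error-bound} now holds on all of $\X$, so no neighborhood restriction is needed. Throughout we use that $\mathcal{F}$ is closed, since $g$ is continuous and $\X$ is compact. Hence for every $\bs{x}\in\X$ the infimum in $\dist(\bs{x},\mathcal{F})$ is attained at some $\bs{y}(\bs{x})\in\mathcal{F}$, provided $\mathcal{F}\neq\emptyset$. Nonemptiness is implied by the hypotheses of each direction.

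\emph{Key estimate (for (ii)).} Let $\bar{\bs{x}}$ be a global minimizer of~\eqref{p:prelim-constrained} and take any $\bs{x}\in\X$. Pick $\bs{y}\in\mathcal{F}$ with $\|\bs{x}-\bs{y}\|=\dist(\bs{x},\mathcal{F})$. Lipschitz continuity of $f$ on $\X$ (note $\bs{y}\in\X$) and~\eqref{eq:prelim-global-error-bound} give
\[
f(\bs{x}) \ \ge\ f(\bs{y}) - L_f\,\dist(\bs{x},\mathcal{F}) \ \ge\ f(\bar{\bs{x}}) - L_f\kappa\,[g(\bs{x})]_+ .
\]
Set $\bar\sigma:=L_f\kappa$. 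For any $\sigma>\bar\sigma$ this yields
\[
f(\bs{x})+\sigma[g(\bs{x})]_+\ \ge\ f(\bar{\bs{x}}) = f(\bar{\bs{x}})+\sigma[g(\bar{\bs{x}})]_+,
\]
since $g(\bar{\bs{x}})\le 0$. So $\bar{\bs{x}}$ globally minimizes~\eqref{p:prelim-penalty}. The same inequality, when $[g(\bs{x})]_+>0$ and $\sigma>\bar\sigma$, is in fact strict. This shows that every global minimizer of the penalty problem is feasible, which is worth recording.

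\emph{Direction (i).} Let $\bar{\bs{x}}$ be a global minimizer of~\eqref{p:prelim-penalty} with $g(\bar{\bs{x}})\le0$. For any $\bs{x}\in\mathcal{F}$ the penalty term vanishes, so
\[
f(\bar{\bs{x}}) = f(\bar{\bs{x}})+\sigma[g(\bar{\bs{x}})]_+ \le f(\bs{x})+\sigma[g(\bs{x})]_+ = f(\bs{x}).
\]
Hence $\bar{\bs{x}}$ is a global minimizer of~\eqref{p:prelim-constrained}. This direction needs no error bound.

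\emph{Equivalence and main obstacle.} Since both (i) and (ii) hold under the hypotheses, they are trivially equivalent; we state the theorem in the same form as Theorem~\ref{thm:prelim-local-exact-penalty} for parallelism. The only genuinely delicate point is the attainment of the projection onto the possibly nonconvex set $\mathcal{F}$. This is settled by closedness and compactness as above. Alternatively, one can take $\bs{y}$ with $\|\bs{x}-\bs{y}\|\le\dist(\bs{x},\mathcal{F})+\eta$ and then let $\eta\downarrow0$.
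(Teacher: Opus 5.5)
Your proof is correct and is exactly the argument the paper intends. The paper omits this proof as analogous to the appendix proof of Theorem~\ref{thm:prelim-local-exact-penalty}: direction (i) restricts the penalty inequality to feasible points, and direction (ii) projects onto $\mathcal{F}$ and combines Lipschitz continuity with the error bound to get $\bar\sigma=L_f\kappa$, now on all of $\X$ with no neighborhood restriction.
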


\section{Algorithm Frameworks}\label{sec:alg}

In this section, we present the overall algorithmic foundations of our penalty based approach for solving chance constrained program~\eqref{p:ccp-saa}. We first review the DC reformulation of the SAA chance constraint in the primal space and discuss why the exact penalty in the primal space can be fragile in practice, especially in the nonlinear settings. Motivated by these limitations and inspired by the general Toland duality of DC programs~\cite{toland1978duality}, we propose an alternative exact penalty approach and then derive an equivalent lifted formulation, which is a mathematical program with complementarity constraints (MPCC), and show that the resulting value-function representation yields a tractable DC structure. This leads to a polyhedral DC program with a finite convergence guarantee and is amenable to effective warm starts.

\subsection{Algorithm in the Primal Space}

To obtain a tractable reformulation of Problem~\eqref{p:ccp-saa},~\cite{wang2025proximal} recently proposed a difference of convex based reformulation of Problem~\eqref{p:ccp-saa} along with a difference of convex based algorithm (DCA). The algorithm is easy to implement, with subproblems that can be solved directly by state-of-the-art solvers, and is especially efficient in the polyhedral setting. We begin this section by briefly reviewing its key elements. Let $m:=\lfloor \alpha S\rfloor$ and let $g_{(1)}(\bs{x})\le\cdots\le g_{(S)}(\bs{x})$ denote the nondecreasing rearrangement of the vector $(g_1(\bs{x}),\dots,g_S(\bs{x}))$. Define
\begin{equation}\label{eq:G-def}
G_1(\bs{x}) := \sum_{s=S-m}^{S} g_{(s)}(\bs{x}), \qquad G_2(\bs{x}) := \sum_{s=S-m+1}^{S} g_{(s)}(\bs{x}).
\end{equation}

The following equivalence is standard, and we include it for completeness.

\begin{proposition}{\cite[lemma 1]{wang2025proximal}} 
\label{prop:equiv-cc}
For any $\bs{x}\in\X$, the sample based chance constraint
\begin{equation} \label{eq:sample-chance-constraint}
\sum_{s=1}^S \mb{I}\{g_s(\bs{x})\le 0\} \;\ge\; S-m
\end{equation}
holds if and only if $G_1(\bs{x}) - G_2(\bs{x}) \leq 0$.
\end{proposition}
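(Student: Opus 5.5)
The key observation is that the difference $G_1(\bs{x})-G_2(\bs{x})$ telescopes to a single order statistic. By the definitions in \eqref{eq:G-def}, the index ranges of the two sums differ only in the term $s=S-m$, so
\[
G_1(\bs{x})-G_2(\bs{x}) \;=\; g_{(S-m)}(\bs{x}).
\]
(When $m=0$ the sum defining $G_2$ is empty and equals zero, and the identity still holds with $G_1=g_{(S)}$.) Since $\alpha\in(0,1)$ gives $0\le m<S$, the index $S-m$ lies in $[S]$ and $g_{(S-m)}$ is well defined. The claim therefore reduces to showing that \eqref{eq:sample-chance-constraint} holds if and only if $g_{(S-m)}(\bs{x})\le 0$.

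For sufficiency, assume $g_{(S-m)}(\bs{x})\le 0$. Monotonicity of the rearrangement gives $g_{(1)}(\bs{x})\le\cdots\le g_{(S-m)}(\bs{x})\le 0$. These are $S-m$ values of the multiset $\{g_s(\bs{x})\}_{s=1}^S$, counted with multiplicity, so at least $S-m$ scenarios $s$ satisfy $g_s(\bs{x})\le 0$. Hence $\sum_s\ind\{g_s(\bs{x})\le0\}\ge S-m$.

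For necessity, argue by contraposition. Suppose $g_{(S-m)}(\bs{x})>0$. Then $g_{(j)}(\bs{x})>0$ for all $j\ge S-m$, which is $m+1$ indices. So at least $m+1$ scenarios violate the constraint, and at most $S-m-1$ satisfy it, contradicting \eqref{eq:sample-chance-constraint}.

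The only point needing care is bookkeeping with ties in the rearrangement. The sorted vector is a permutation of $(g_1(\bs{x}),\dots,g_S(\bs{x}))$, so counting sorted positions is the same as counting scenarios, regardless of how ties are broken. This also shows that $G_1$ and $G_2$ themselves do not depend on the tie-breaking choice.
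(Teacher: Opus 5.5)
Your proof is correct and takes essentially the same route the paper relies on. The paper cites the result from \cite{wang2025proximal} without proof, but it uses the same telescoping identity $G_1(\bs{x})-G_2(\bs{x})=g_{(S-m)}(\bs{x})$ together with the empirical-quantile characterization (Remark~(i) and the proof of the lifted global exact penalty theorem), and you verify both directions correctly, including the $m=0$ case and ties.
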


By Proposition~\ref{prop:equiv-cc}, Problem~\eqref{p:ccp-saa} is equivalent to
\begin{equation} \label{p:ccp-dc} 
v^* = \min_{\bs{x}} \left\{ f(\bs{x}) : \bs{x} \in \hat\X\right\},
\end{equation}
where the feasible region $\hat\X$ of~\eqref{p:ccp-dc} is defined as follows:
\[
\hat\X := \{\bs{x}\in\X : \phi(\bs{x}) := G_1(\bs{x}) - G_2(\bs{x}) \leq 0\}.
\]

We next summarize several properties of the functions $G_1$ and $G_2$ established in~\cite{wang2025proximal}.

\begin{proposition}{\cite[lemma~1,~2]{wang2025proximal}} \label{prop:G1G2-subdiff}
Under Assumption~\ref{ass:X-convex} and Assumption~\ref{ass:f-g-convex}, both $G_1(\bs{x})$ and $G_2(\bs{x})$ defined in~\eqref{eq:G-def} are convex and continuous on $\X$. Moreover, let $\M_{G_2}(\bs{x})$ denote the active index set of $G_2(\bs{x})$ at $\bs{x}$ by
\begin{equation*}
\M_{G_2}(\bs{x})
\;:=\;
\Bigl\{(s_1,\ldots,s_m)\subseteq[S]:\ 
\sum_{t=1}^{m} g_{s_t}(\bs{x}) = G_2(\bs{x})
\Bigr\}.
\end{equation*}
Then for any $\bs{x}\in\X$, the subdifferential of $G_2$ admits the explicit representation
\begin{equation*}
\partial G_2(\bs{x})
\;=\;
\conv\left\{
\bigcup_{(s_1,\ldots,s_m)\in \M_{G_2}(\bs{x})}
\ \sum_{t=1}^{m} \partial g_{s_t}(\bs{x})
\right\},
\end{equation*}
where for all $s \in [S]$
\begin{equation*}
\partial g_s(\bs{x})
\;=\;
\conv\Bigl\{ \nabla h_{s,i}(\bs{x}):\ i\in \M_s(\bs{x}) \Bigr\},
\end{equation*}
and $\M_s(\bs{x})$ denotes the active index set of $g_s$ at $\bs{x}$ by
\begin{equation*}
\M_s(\bs{x}) \;:=\; \left\{ i\in[I]:\ h_{s,i}(\bs{x}) = g_s(\bs{x}) \right\}.
\end{equation*}
\end{proposition}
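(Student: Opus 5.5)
The claim is that $G_1,G_2$ are convex and continuous on $\X$, with the stated description of $\partial G_2$. I would write both functions as finite maxima of convex functions and then apply the standard subdifferential calculus for maxima and sums. For $r\in\{m,m+1\}$, the sum of the $r$ largest entries of a vector admits the representation
\[
\kyfan_r(\bs y)=\max_{J\subseteq[S],\,|J|=r}\ \sum_{s\in J} y_s .
\]
This holds because any $r$-subset sums to at most the $r$ largest entries, and the subset of indices of the $r$ largest entries attains that value. Hence
\[
G_1(\bs x)=\max_{|J|=m+1}\sum_{s\in J} g_s(\bs x),\qquad G_2(\bs x)=\max_{|J|=m}\sum_{s\in J} g_s(\bs x).
\]
By Assumption~\ref{ass:f-g-convex}, each $g_s=\max_{i\in[I]} h_{s,i}$ is a finite maximum of $C^1$ convex functions, so it is convex and continuous on the open set $\U\supseteq\X$. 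Finite sums and finite maxima preserve convexity and continuity, so $G_1$ and $G_2$ are convex and continuous on $\U$, and in particular on $\X$. If $m=0$, then $G_2\equiv 0$ and every remaining statement is trivial, so from here on I take $m\ge 1$.

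For the subdifferential I would use the Danskin / Ioffe--Tikhomirov rule for a pointwise maximum of finitely many convex functions, each finite and continuous on an open set containing $\bs x$. The rule says the subdifferential of the maximum is the convex hull of the union of the subdifferentials of the active pieces. Applied to $g_s$, the pieces are $h_{s,i}$ and the active ones are those with $i\in\M_s(\bs x)$. Each piece is differentiable with $\partial h_{s,i}=\{\nabla h_{s,i}\}$, which gives
\[
\partial g_s(\bs x)=\conv\{\nabla h_{s,i}(\bs x): i\in\M_s(\bs x)\}.
\]
Applied to $G_2$, the pieces are $\sum_{s\in J} g_s$ over $m$-subsets $J$, and the active ones are exactly the tuples in $\M_{G_2}(\bs x)$. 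By the Moreau--Rockafellar sum rule, which applies because all $g_s$ are finite and continuous, $\partial\sum_{t} g_{s_t}(\bs x)=\sum_t\partial g_{s_t}(\bs x)$. Combining the two rules yields the displayed formula for $\partial G_2$.

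I expect the only delicate point to be making sure that these calculus rules are applied at points that may lie on the boundary of $\X$. This is handled by working on the open set $\U$: all functions are real-valued and continuous there, so no normal-cone terms enter and no constraint qualification is needed. Equivalently, the subdifferential is taken for the function on $\U$, and the formula then holds for every $\bs x\in\X$.
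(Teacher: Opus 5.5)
Your proof is correct and is the standard argument. The paper gives no proof of its own and defers to Lemmas~1--2 of the cited work; writing $G_2$ as the maximum of $\sum_{s\in J} g_s$ over $m$-subsets $J$, then applying the max rule for finitely many finite convex functions and the Moreau--Rockafellar sum rule, is exactly what produces the stated formula. One cosmetic point: each $h_{s,i}$ is given as a $C^1$ convex function on all of $\Re^d$, so you can run the whole argument on $\Re^d$ instead of on $\U$, which need not be convex (so ``convex on $\U$'' is slightly loose); no normal-cone terms arise either way.
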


We make the following remarks on Proposition~\ref{prop:G1G2-subdiff}.
\begin{itemize}
\item[(i)] The sample based chance constraint~\eqref{eq:sample-chance-constraint} is equivalent to an empirical value-at-risk (VaR) constraint. 
The requirement that at least $(S-m)$ scenarios satisfy $g_s(\bs{x})\le 0$ holds if and only if 
\[
g_{(S-m)}(\bs{x}) \le 0,
\]
i.e., the empirical $(1-\alpha)$-quantile of $\{g_s(\bs{x})\}_{s=1}^S$ is nonpositive.

\item[(ii)] Similarly, both $G_1$ and $G_2$ can be viewed as the empirical Conditional Value-at-Risk (CVaR)~\cite{rockafellar2002conditional} up to a scaling factor, and each admits an equivalent minimization formulation based on strong duality of linear programs, which will be useful in the following section:
\[
G_1(\bs{x}) = \min_{\alpha \in \Re} \left\{(m+1)\alpha + \sum^S_{s=1}[g_s(\bs{x}) - \alpha]_+\right\}, \quad     G_2(\bs{x}) = \min_{\beta\in\Re} \left\{m\beta + \sum^S_{s=1}[g_s(\bs{x}) - \beta]_+\right\}.
\]

\item[(iii)] Under Assumption~\ref{ass:X-convex}--\ref{ass:f-g-convex}, $G_1$ and $G_2$ are convex and continuous on the compact set $\X$. Therefore, $G_1$, $G_2$ and their difference $\phi(\bs{x})$ are Lipschitz continuous on $\X$ and their Clarke subdifferentials are well-defined~\cite[section 2.1]{clarke1990optimization}.
\end{itemize}

The authors in~\cite{wang2025proximal} proposed a difference of convex based algorithm (DCA) for solving Problem~\eqref{p:ccp-dc}. At the $k$th iteration, let $\bs{x}^k \in \hat\X$. We pick a subgradient $\bs{n}^k \in \partial G_2(\bs{x}^k)$, linearize the concave part in the constraint, and solve the following subproblem as the update:
\begin{equation} \label{eq:dca-update}
\bs{x}^{k+1} = \arg\min_{\bs{x} \in \X} \{f(\bs{x}):G_1(\bs{x}) - G_2(\bs{x}^k) - \langle \bs{n}^k, \bs{x} \rangle \leq 0\}.
\end{equation}

We note that the above DCA scheme has several intrinsic limitations despite its strong numerical performance:
\begin{itemize}
\item[(i)] DCA can be viewed as a conservative approximation of Problem~\eqref{p:ccp-saa}. The affine majorization of the concave part can sometimes be overly conservative, so the subproblem~\eqref{eq:dca-update} may become infeasible. A similar problem may also arise in other conservative approximation techniques, e.g., the CVaR approximation~\cite{rockafellar2002conditional}.
\item[(ii)] The DCA requires an initial feasible point (i.e., $\bs{x}^0\in\hat\X$) to linearize the constraint, and finding such a point can be computationally demanding.
\item[(iii)] Problem~\eqref{p:ccp-dc} is a polyhedral DC program when $h_{i,s}$ are affine for all $i\in[I]$ and $s\in[S]$. While DCA can be computationally efficient in the polyhedral setting due to the finite convergence guarantee, it often performs poorly with nonlinear constraints due to pronounced oscillations and slow progress (see~\cite{wang2025proximal} and the numerical experiments in Section~\ref{sec:numerical} for details).
\end{itemize}

To address the first two limitations, we propose an exact penalty based approach for the DC program~\eqref{p:ccp-dc}, which has been extensively studied in the literature~\cite{lipp2016variations, le2012exact} and does not require an initial feasible solution $\bs{x}^0\in\hat\X$. Let $\sigma > 0$ be the penalty parameter, and we consider the following penalized problem:
\begin{equation} \label{p:ccp-dc-pen}
\min_{\bs{x}\in\X}\ \bigl\{f(\bs{x})+\sigma[\phi(\bs{x})]_+\bigr\}.
\end{equation}

It is well-known (\cite[proposition 2.1]{horst1999dc}) that the $\ell_1$ exact penalty of a DC function still admits an explicit DC decomposition. Therefore, Problem~\eqref{p:ccp-dc-pen} admits a DC decomposition as follows:
\begin{equation} \label{p:ccp-dc-pen-explicit}
\min_{\bs{x} \in X} \quad f(\bs{x}) + \sigma \max\{G_1(\bs{x}), G_2(\bs{x})\} - \sigma G_2(\bs{x}).
\end{equation}

For a fixed $\sigma > 0$, we may also adopt a proximal DC algorithm to solve Problem~\eqref{p:ccp-dc-pen-explicit}. Similar to the DCA, for the $k$th iteration, we can pick a subgradient $\bs{n}^k \in \partial G_2(\bs{x}^k)$ by Proposition~\ref{prop:G1G2-subdiff}. We then need to solve the following subproblem:
\begin{equation} \label{p:ccp-dc-pen-explicit-sub}
\min_{\bs{x} \in X} \left\{ f(\bs{x}) + \sigma \max\{G_1(x), G_2(\bs{x})\} - \sigma \langle \bs{n}^k, \bs{x}\rangle \right\}.
\end{equation}
We remark that the subproblem~\eqref{p:ccp-dc-pen-explicit-sub} is feasible under the mild assumption that $\X$ is nonempty. To stabilize the iterates, we can also add a proximal term with $\rho>0$. By introducing an auxiliary epigraph variable $t \in \Re$ and leveraging the dual representation of CVaR, we can obtain an equivalent formulation as follows:
\begin{equation} \label{p:ccp-dc-pen-explicit-sub-equiv}
\begin{aligned}
\min_{\bs{x}\in \X,\ t,\ \eta_1,\eta_2,\ \bs{u}\in\Re_+^S,\ \bs{v}\in\Re_+^S}\quad
& f(\bs{x}) + \sigma t - \sigma \langle \bs{n }^k,\bs{x}\rangle + \frac{\rho}{2}\|\bs{x} - \bs{x}^k\|^2\\
\text{s.t.}\quad
& t \ \ge\ (m+1)\eta_1 + \sum_{s=1}^S u_s, \quad u_s \ \ge\ g_s(\bs{x}) - \eta_1,\quad \forall s\in[S], \\
& t \ \ge\ m\,\eta_2 + \sum_{s=1}^S v_s, \quad v_s \ \ge\ g_s(\bs{x}) - \eta_2,\quad  \forall s\in[S].
\end{aligned}
\end{equation}

We now present the full penalty framework for Problem~\eqref{p:ccp-dc-pen-explicit} in Algorithm~\ref{alg:penalty-dc-primal}, which is a double-loop algorithm. The inner loop solves the DC program~\eqref{p:ccp-dc-pen-explicit} via update~\eqref{p:ccp-dc-pen-explicit-sub} to a critical point, and the outer loop updates the parameter $\sigma$ by enlarging it with a constant factor $\beta >0$. Following~\cite[Chapter 17]{nocedal2006numerical}, we initialize the algorithm with a small penalty $\sigma^0$ and then increase $\sigma$ progressively, so that the violation of the chance constraint (equivalently, $[\phi(\bs{x})]_+$) is gradually driven to zero. 

Similar to the standard analysis of DC program~\cite{horst1999dc}, the sequence generated by Algorithm~\ref{alg:penalty-dc-primal} subsequently converges to a critical point of Problem~\eqref{p:ccp-dc-pen-explicit}. Let $H(\bs{x}) = \max\{G_1(\bs{x}), G_2(\bs{x})\}$. We say $\bs{x}^* \in \X$ is a critical point of Problem~\eqref{p:ccp-dc-pen-explicit} if there exists $\bs{n}^* \in \partial G_2(\bs{x}^*),\bs{h}^* \in \partial H(\bs{x}^*)$ such that the following condition holds:
\begin{equation*}
\bs{0} \in 
\nabla f(\bs{x}^*)
\ +\ \sigma\,\bs{h}^*
\ -\ \sigma\,\bs{n}^*
\ +\ \N_{\X}(\bs{x}^*).
\end{equation*}

\begin{algorithm}[ht]
\caption{Penalty Based Proximal DC Algorithm in the Primal Space} \label{alg:penalty-dc-primal}
\begin{algorithmic}[1]
\Require Initial $\sigma^0>0$, $\beta>1$.
\For{$t = 0,1,2,\dots$}
\For{$k = 0,1,2,\dots$}
\State Find $\bs{g}^{k,t} \in \partial G_2(\bs{x}^{k,t})$ via Proposition~\ref{prop:G1G2-subdiff} 
\State Solve the subproblem~\eqref{p:ccp-dc-pen-explicit-sub-equiv} with $\sigma = \sigma^t$
\EndFor
\State Update $\sigma^{t+1}\gets \beta\,\sigma^t$
\EndFor
\end{algorithmic}
\end{algorithm}

We conclude this subsection with some remarks on Algorithm~\ref{alg:penalty-dc-primal}.
\begin{itemize}
\item[(i)] 
Algorithm~\ref{alg:penalty-dc-primal} admits effective warm starts. Specifically, the outer penalty loop can reuse the output solution of the previous penalty level as the initialization for the next penalty level, which often accelerates convergence in practice. In addition, when $\sigma$ is small, and constraint enforcement is mild, it is usually unnecessary to solve the proximal DC subproblem with fixed-$\sigma$ to high precision. Instead, one may adopt a loose inner stopping tolerance at early penalty stages and gradually tighten it as $\sigma$ increases, thereby reducing the overall runtime.

\item[(ii)] 
In our experiments, we set $\rho$ to a small constant ($10^{-4} -10^{-3}$) to improve numerical stability and mitigate oscillations, which are observed in the numerical experiments when $\rho=0$. 

\item[(iii)] The primal-space framework extends directly to the case where the objective itself admits a DC decomposition, namely $f(\bs{x})=f_1(\bs{x})-f_2(\bs{x})$ with $f_1,f_2$ being convex. In this case, the penalized objective can be written as
\[
\bigl(f_1(\bs{x})+\sigma H(\bs{x})\bigr)\ -\ \bigl(f_2(\bs{x})+\sigma G_2(\bs{x})\bigr),
\]
which is still a DC function on $\X$. A similar DCA algorithm can also be applied by solving a sequence of subproblems where the concave part $(f_2(\bs{x})+\sigma G_2(\bs{x}))$ is linearized using first-order approximation at the current iterate.
\end{itemize}

Similar to the DCA algorithm in~\cite{wang2025proximal}, Algorithm~\ref{alg:penalty-dc-primal} is particularly efficient when applied to a polyhedral DC program, which has the property of finite convergence under the mild assumption~\cite[theorem 6]{tao1997convex}. A sufficient condition for such property is that $\X$ is a polyhedron and $f$, $h_{i,s}$ are affine for all $i,s$. In the numerical experiments (see Section~\ref{sec:numerical}), we observe that Algorithm~\ref{alg:penalty-dc-primal} yields a better solution with a lower objective value than the DCA algorithm in the polyhedral setting. The running time of both algorithms is comparable in the setting with a linear objective function. At the same time, Algorithm~\ref{alg:penalty-dc-primal} exhibits substantial speedup over the DCA algorithm when the objective function is quadratic. 

In contrast, when the constraints are nonlinear, Algorithm~\ref{alg:penalty-dc-primal} may exhibit pronounced oscillations, similar to the DCA algorithm, and require substantially more iterations and runtime to solve a single subproblem, degrading its practical performance. Moreover, Algorithm~\ref{alg:penalty-dc-primal} fails to return a feasible solution within a reasonable runtime even though we use a larger initial penalty $\sigma^0$ and a more aggressive growth factor $\beta$. This phenomenon motivates us to propose an alternative exact penalty approach with better numerical stability in the general setting.

\subsection{Equivalent Formulation in the Lifted Space}

Motivated by the limitations discussed above, we next consider equivalent formulations of Problem~\eqref{p:ccp-saa}, which will serve as the basis for our subsequent algorithmic development. Observe that the chance constraint~\eqref{eq:sample-chance-constraint} admits an equivalent reformulation~\cite{zhou20250} as follows:
\[
\sum^S_{s=1} \ind\{g_s(\bs{x})\leq 0\} \geq S-m \Longleftrightarrow \sum^S_{s=1} \ind\{[g_s(\bs{x})]_+> 0\} \leq m.
\]

Introducing an auxiliary variable $\bs{y} \in \Re^S_+$, we obtain an equivalent reformulation of Problem~\eqref{p:ccp-saa} with a cardinality constraint:
\begin{equation*} 
v^* = \min_{\bs{x}\in\X, \bs{y} \geq \bs{0}} \left \{ f(\bs{x}): \|\bs{y}\|_0 \leq m, \; g_s(\bs{x}) \leq y_s, \; \forall s\in [S]\right\},
\end{equation*}
where the cardinality constraint $\|\bs{y}\|_0 \leq m$ enforces that the number of violated scenarios cannot exceed $m$.

It is well-known in the literature that the $\ell_0$-norm admits a DC decomposition~\cite{cui2021modern, gotoh2018dc}, that is 
\[
\|\bs{y}\|_0 \leq m \Longleftrightarrow \|\bs{y} \|_1 - \|\bs{y}\|_{(m)} = 0,
\]
where $\|\bs{y}\|_{(m)}$ denotes the Ky-Fan norm of $\bs{y}$. In \cite{gotoh2018dc}, the authors considered a penalty approach based on the DC decomposition of the cardinality constraint. In our setting, since $\bs{y} \geq \bs{0}$, the penalized problem can be simplified as follows:
\begin{equation} 
\label{p:ccp-pen-kyfan}
v^* = \min_{\bs{x}\in\X, \bs{y} \geq \bs{0}} \left \{ f(\bs{x}) + \sigma (\bs{1}^\top \bs{y} - \kyfan_m(\bs{y})):  \; g_s(\bs{x}) \leq y_s, \; \forall s\in [S]\right\},
\end{equation}
where $\kyfan_m(\bs{y})  = \max_{\bs{u}}\{\langle \bs{u}, \bs{y} \rangle : \bs{0} \leq \bs{u} \leq \bs{1},\; \bs{1} ^\top \bs{u} \leq m\}$ denotes the sum of the $m$-largest elements of $\bs{y}$. To obtain a simplified reformulation, define the convex polyhedron
\begin{equation*} 
\C\ :=\ \Bigl\{\bs{z}\in\Re^S:\ 0\le z_s\le 1\ \forall s\in[S],\ \bs{1}^\top \bs{z}\ge S-m\Bigr\}.
\end{equation*} 

We can now obtain an equivalent reformulation of Problem~\eqref{p:ccp-pen-kyfan} using the support function $\pi_{\C}$ of the polyhedron $\C$ as follows:
\begin{equation} 
\label{p:ccp-card-pen}
v^* = \min_{\bs{x}\in\X, \bs{y} \geq \bs{0}} \left \{ f(\bs{x}) - \pi_{\C}(-\sigma \bs{y}):  \; g_s(\bs{x}) \leq y_s, \; \forall s\in [S]\right\},
\end{equation}
where $\pi_{\C}(\bs{y}) = \max_{\bs{z}}\{\bs{z}^\top \bs{y} : \bs{z} \in \C\}$.

A key observation is that Problem~\eqref{p:ccp-card-pen} is a polyhedral DC program since the nonsmooth concave part in the objective function $- \pi_{\C}(-\sigma \bs{y})$ is polyhedral. Moreover, its Fenchel conjugate takes a simple form: the indicator function of the same polyhedron composed with a linear mapping. Therefore, a natural idea is to lift this problem into the dual space using the Toland duality~\cite{toland1978duality, horst1999dc, banert2019general} for DC programs, which can transfer the nonsmoothness in the objective function to simple polyhedral constraints. Compared with the primal problem~\eqref{p:ccp-card-pen}, the dual problem takes the form of minimizing a concave function on a polyhedron, for which the standard DC algorithm guarantees finite convergence. This simple structure can potentially improve numerical stability regardless of the nonlinearity of the constraints and objective. We formally state this in the following proposition.

\begin{proposition}\label{prop:ts-dual-card-pen}
Suppose that Assumptions~\ref{ass:X-convex}--\ref{ass:f-g-convex} hold.
Consider Problem~\eqref{p:ccp-card-pen} in the unconstrained form
\[
\min_{(\bs{x},\bs{y})\in\Re^d\times\Re^S}
\Bigl\{\zeta(\bs{x},\bs{y})-\eta(\bs{x},\bs{y})\Bigr\},
\]
where
\begin{align*}
\zeta(\bs{x},\bs{y})
:= f(\bs{x})
+\delta_{\X}(\bs{x})
+\delta_{\Re^S_+}(\bs{y})
+\delta_{\{(\bs{x},\bs{y}):\, g_s(\bs{x})\le y_s,\ \forall s\in[S]\}}(\bs{x},\bs{y}), \quad \eta(\bs{x},\bs{y})
:= \pi_{\C}(-\sigma \bs{y}).
\end{align*}
Let $(\bs{p},\bs{q})\in\Re^d\times\Re^S$ denote the dual variables associated with $(\bs{x},\bs{y})$. Then the Fenchel conjugates of $\eta$ and $\zeta$ satisfy:

\smallskip
\noindent\textnormal{(i)}
\[
\eta^*(\bs{p},\bs{q})
=
\delta_{\{\bs{0}\}}(\bs{p})
+
\inf_{\bs{z}\in\Re^S}\left\{
\delta_{\C}(\bs{z}) : -\sigma \bs{z}=\bs{q}
\right\}.
\]

\smallskip
\noindent\textnormal{(ii)}
\[
\zeta^*(\bs{p},\bs{q})
=
\sup_{\substack{\bs{x}\in\X,\ \bs{y}\ge\bs{0}\\ g_s(\bs{x})\le y_s,\ \forall s\in[S]}}
\Bigl\{
\langle \bs{p},\bs{x}\rangle+\langle \bs{q},\bs{y}\rangle-f(\bs{x})
\Bigr\}.
\]

\smallskip
Consequently, by parameterizing the dual variable of the support function directly by $\bs{z}\in\C$, the Toland dual of Problem~\eqref{p:ccp-card-pen} can be written as
\begin{equation}\label{eq:ts-dual-card-pen}
\min_{\bs{z}\in\C}\ \Bigl(-\zeta^*(\bs{0},-\sigma \bs{z})\Bigr)
=
\min_{\bs{z}\in\C}\ \inf_{\substack{\bs{x}\in\X,\ \bs{y}\ge\bs{0}\\ g_s(\bs{x})\le y_s,\ \forall s\in[S]}}
\Bigl\{
f(\bs{x})+\sigma \bs{z}^\top \bs{y}
\Bigr\}.
\end{equation}
\end{proposition}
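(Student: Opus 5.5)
The plan is to compute both conjugates directly from the definition of the Fenchel conjugate, then assemble the Toland dual $\min_{(\bs p,\bs q)}\{\eta^*(\bs p,\bs q)-\zeta^*(\bs p,\bs q)\}$. Before starting, I will check the standing hypotheses. By Assumptions~\ref{ass:X-convex}--\ref{ass:f-g-convex}, $\zeta$ is proper (since $\X\neq\emptyset$ and any $\bs y\ge\max_s[g_s(\bs x)]_+$ works), convex (each $g_s$ is a maximum of convex functions, so its epigraph-type set is convex), and closed (since $\X$ is compact and $f,g_s$ are continuous). The function $\eta$ is a finite polyhedral convex function because $\C$ is a nonempty polytope (it contains $\bs 1$). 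So both are closed proper convex, and the Toland duality recalled in Section~\ref{sec:priliminaries} applies.

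\textbf{Step (i).} Since $\eta$ does not depend on $\bs x$, the supremum over $\bs x$ of $\langle\bs p,\bs x\rangle$ is $+\infty$ unless $\bs p=\bs 0$. This produces the factor $\delta_{\{\bs 0\}}(\bs p)$. For the $\bs y$-part, write $\eta(\bs y)=\pi_{\C}(-\sigma\bs y)=\pi_{-\sigma\C}(\bs y)$, using $\sigma>0$. This is the support function of the closed convex set $-\sigma\C$, so its conjugate is the indicator $\delta_{-\sigma\C}(\bs q)$; this is standard, since the conjugate of a support function of a closed convex set is the indicator of that set. Finally, $\delta_{-\sigma\C}(\bs q)=\inf_{\bs z}\{\delta_{\C}(\bs z):-\sigma\bs z=\bs q\}$, which gives exactly the displayed formula.

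\textbf{Step (ii).} This is immediate: the indicator functions inside $\zeta$ restrict the supremum in $\sup_{\bs x,\bs y}\{\langle\bs p,\bs x\rangle+\langle\bs q,\bs y\rangle-\zeta(\bs x,\bs y)\}$ to the stated feasible set, leaving $-f(\bs x)$ in the objective.

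\textbf{Assembly.} In the Toland dual $\min\{\eta^*-\zeta^*\}$, $\eta^*$ is finite only when $\bs p=\bs 0$ and $\bs q=-\sigma\bs z$ with $\bs z\in\C$, where it equals $0$. Because $\sigma>0$, the map $\bs z\mapsto-\sigma\bs z$ is a bijection from $\C$ onto $-\sigma\C$, so I can reparameterize the dual by $\bs z\in\C$ and obtain $\min_{\bs z\in\C}\{-\zeta^*(\bs 0,-\sigma\bs z)\}$. Then I substitute (ii) and use $-\sup\{\cdot\}=\inf\{-\cdot\}$ to get $\inf\{f(\bs x)+\sigma\bs z^\top\bs y\}$ over the feasible set, which is \eqref{eq:ts-dual-card-pen}.

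\textbf{Main obstacle.} The only delicate point is the $+\infty-(+\infty)$ convention outside $\mathrm{dom}\,\eta^*$. I will note that $\zeta^*$ is finite on that region: for $\bs z\in\C$ we have $\bs z\ge\bs 0$, so the infimum is at least $\min_{\X}f>-\infty$ because $\X$ is compact. Hence it is legitimate to restrict the minimization to $\mathrm{dom}\,\eta^*$. I will also note that "$\min$" is attained, since the value function is concave in $\bs z$ (an infimum of affine functions), finite, and therefore continuous on the polytope $\C$.
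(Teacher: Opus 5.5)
Your proof is correct and follows the paper's route: compute $\eta^*$ and $\zeta^*$ directly, then restrict the Toland dual to $\mathrm{dom}\,\eta^*$ and reparameterize by $\bs z\in\C$. Writing $\pi_{\C}(-\sigma\bs y)=\pi_{-\sigma\C}(\bs y)$ is just the paper's linear-composition conjugacy rule specialized to the invertible map $-\sigma I$.

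The only slip is in your side remark on attainment. A finite concave function on a polytope need not be continuous at relative boundary points; it is only guaranteed lower semicontinuous relative to $\C$. Its minimum over $\C$ is nonetheless attained at a vertex, which is all you need.
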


\begin{proof}
We derive the two Fenchel conjugates separately. The Toland dual problem~\eqref{eq:ts-dual-card-pen} can then be derived via straightforward calculation.

\smallskip
\noindent\textbf{(i): Computing $\eta^*$.}
Since $\eta$ does not depend on $\bs{x}$, its conjugate separates as
\[
\eta^*(\bs{p},\bs{q})
=
\sup_{\bs{x}\in\Re^d}\langle \bs{p},\bs{x}\rangle
+
\sup_{\bs{y}\in\Re^S}
\Bigl\{
\langle \bs{q},\bs{y}\rangle-\pi_{\C}(-\sigma \bs{y})
\Bigr\}.
\]
The first term equals $\delta_{\{\bs{0}\}}(\bs{p})$.
For the second term, write $h(\bs{u})=\pi_{\C}(\bs{u})$, so that
\[
\eta(\bs{y})=(h\circ A)(\bs{y}),\qquad A:=-\sigma I.
\]
Since $h$ is the support function of $\C$, we have
\[
h^*(\bs{z})=\delta_{\C}(\bs{z}).
\]
By the conjugacy rule for composition with a linear mapping,
\[
(h\circ A)^*(\bs{q})
=
\inf_{\bs{z}\in\Re^S}
\Bigl\{
h^*(\bs{z}) : A^\top \bs{z}=\bs{q}
\Bigr\}.
\]
Because $A^\top=-\sigma I$, it follows that
\[
\sup_{\bs{y}\in\Re^S}
\Bigl\{
\langle \bs{q},\bs{y}\rangle-\pi_{\C}(-\sigma \bs{y})
\Bigr\}
=
\inf_{\bs{z}\in\Re^S}
\left\{
\delta_{\C}(\bs{z}) : -\sigma \bs{z}=\bs{q}
\right\}.
\]
Therefore
\[
\eta^*(\bs{p},\bs{q})
=
\delta_{\{\bs{0}\}}(\bs{p})
+
\inf_{\bs{z}\in\Re^S}
\left\{
\delta_{\C}(\bs{z}) : -\sigma \bs{z}=\bs{q}
\right\},
\]
which proves part~\textnormal{(i)}.

\smallskip
\noindent\textbf{(ii): Computing $\zeta^*$.}
By definition,
\[
\zeta^*(\bs{p},\bs{q})
=
\sup_{(\bs{x},\bs{y})\in\Re^d\times\Re^S}
\Bigl\{
\langle \bs{p},\bs{x}\rangle+\langle \bs{q},\bs{y}\rangle-\zeta(\bs{x},\bs{y})
\Bigr\}.
\]
Substituting the definition of $\zeta$ yields
\[
\zeta^*(\bs{p},\bs{q})
=
\sup_{\substack{\bs{x}\in\X,\ \bs{y}\ge\bs{0}\\ g_s(\bs{x})\le y_s,\ \forall s\in[S]}}
\Bigl\{
\langle \bs{p},\bs{x}\rangle+\langle \bs{q},\bs{y}\rangle-f(\bs{x})
\Bigr\},
\]
which proves part~\textnormal{(ii)}.
\end{proof}

We provide an alternative approach to deriving the same formulation~\eqref{eq:ts-dual-card-pen} based on integer programming techniques, as discussed in the literature. Starting from~\eqref{p:ccp-saa}, we introduce a binary vector $\bs{z}\in\{0,1\}^S$ to represent the indicator functions. 
To obtain a lifted (higher-dimensional) reformulation, we can introduce another nonnegative vector $\bs{y} \in \Re^S_+$ to indicate constraint violation instead of using the big-M formulation. A similar approach has been proposed in~\cite{jiang2022also}. To make the paper self-contained, we state the result formally in the following proposition without proof.

\begin{proposition}{\cite[proposition 1]{jiang2022also}}
Problem~\eqref{p:ccp-saa} can be viewed as the following equivalent formulation:
\begin{equation} \label{p:ccp-bilinear}
v^* =  \min_{\bs{x} \in \X, \bs{y}\in\Re^S_+, \bs{z} \in [0,1]^S} \left\{ f(\bs{x}): \sum^S_{s=1}z_s \geq S-m, \; V(\bs{y}, \bs{z}) = 0, \; g_s(\bs{x}) \leq y_s, \;\forall s \in [S]  \right\},
\end{equation}
where $V(\bs{y}, \bs{z}) = \sum_{s=1}^S y_sz_s$.
\end{proposition}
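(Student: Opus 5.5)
The plan is to prove equality of optimal values by showing that the two feasible sets project onto the same set of $\bs{x}$ values. Objective values then coincide, since $f$ depends only on $\bs{x}$. Throughout we use Proposition~\ref{prop:equiv-cc} only implicitly, working directly with the counting form~\eqref{eq:sample-chance-constraint} of the chance constraint, i.e., $|\{s: g_s(\bs{x})\le 0\}|\ge S-m$. Note also that $\sum_s z_s\ge S-m$ is equivalent to $\sum_s z_s \ge (1-\alpha)S$ for $z\in\{0,1\}^S$, because $m=\lfloor \alpha S\rfloor$ and the left side is an integer. For fractional $\bs{z}$ we simply work with $S-m$ as written in~\eqref{p:ccp-bilinear}.

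\textbf{Step 1 (SAA-feasible $\Rightarrow$ lifted-feasible).} Let $\bs{x}\in\X$ be feasible for~\eqref{p:ccp-saa}. Set $z_s=\ind\{g_s(\bs{x})\le 0\}$ and $y_s=[g_s(\bs{x})]_+$. Then $\bs{y}\ge \bs{0}$ and $g_s(\bs{x})\le y_s$ for every $s$, and $\sum_s z_s\ge S-m$ is exactly the sample chance constraint. Moreover, $y_sz_s=0$ termwise: if $z_s=1$ then $g_s(\bs{x})\le0$, so $y_s=0$. Hence $V(\bs{y},\bs{z})=0$, the point $(\bs{x},\bs{y},\bs{z})$ is feasible for~\eqref{p:ccp-bilinear}, and it has the same objective value.

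\textbf{Step 2 (lifted-feasible $\Rightarrow$ SAA-feasible).} This is the only point requiring care, because $\bs{z}$ is merely in $[0,1]^S$. Take a feasible $(\bs{x},\bs{y},\bs{z})$. Since $y_s,z_s\ge0$ and $\sum_s y_sz_s=0$, each product $y_sz_s$ vanishes. Therefore, whenever $z_s>0$ we have $y_s=0$, and hence $g_s(\bs{x})\le 0$. It follows that
\[
|\{s: g_s(\bs{x})\le 0\}| \;\ge\; |\{s: z_s>0\}| \;\ge\; \sum_{s} z_s \;\ge\; S-m,
\]
where the middle inequality uses $z_s\le 1$. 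So $\bs{x}$ satisfies~\eqref{eq:sample-chance-constraint} and is feasible for~\eqref{p:ccp-saa}.

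\textbf{Conclusion.} Combining the two steps, the projection of the feasible set of~\eqref{p:ccp-bilinear} onto $\bs{x}$ equals the feasible set of~\eqref{p:ccp-saa}. Hence the two optimal values agree (both equal $v^*$), and optimal solutions correspond: an optimal $\bs{x}$ extends to a lifted optimum via Step 1, and the $\bs{x}$-part of a lifted optimum is optimal for~\eqref{p:ccp-saa}. Attainment of $v^*$ itself follows from compactness of $\X$ and the fact that the SAA feasible set is a finite union of closed sets, but the statement only needs the value equivalence. I expect Step 2's handling of fractional $\bs{z}$, via the bound $|\mathrm{supp}(\bs{z})|\ge \bs{1}^\top\bs{z}$, to be the one nontrivial observation; the rest is bookkeeping.
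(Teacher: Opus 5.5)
Your proof is correct and complete. The paper states this proposition without proof (it defers to \cite{jiang2022also}), so there is no proof to match line by line. Both directions of your argument do, however, have counterparts elsewhere in the paper.

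Your Step 1 is the lift used in Theorem~\ref{thm:equiv-local-P2Q}, except that there $\bar{\bs{z}}$ is the indicator of exactly $S-m$ satisfied scenarios rather than of all of them; either choice works. For your Step 2, the paper's analogue is Step 1 in the proof of the global exact penalty theorem for \eqref{p:ccp-bilinear-pen}. There it minimizes $\sum_s [g_s(\bs{x})]_+ z_s$ over $\bs{z}\in\C$, placing weight on the $S-m$ smallest components, and obtains $V(\bs{y},\bs{z})\ge[\phi(\bs{x})]_+$ on $\Omega_0$, which is \eqref{eq:V-lower-phi}. Setting $V=0$ gives $\phi(\bs{x})\le 0$, and Proposition~\ref{prop:equiv-cc} then returns the counting constraint \eqref{eq:sample-chance-constraint}.

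Your support-counting bound $|\{s: z_s>0\}|\ge \bs{1}^\top\bs{z}$ is the elementary, purely qualitative version of that step. It avoids sorting, $G_1$, $G_2$ and the linear program altogether. You also make explicit the rounding $\lceil (1-\alpha)S\rceil = S-\lfloor \alpha S\rfloor$, which the paper uses only implicitly when it identifies \eqref{p:ccp-saa} with \eqref{eq:sample-chance-constraint}.

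The paper's route costs more, but it yields the quantitative inequality \eqref{eq:V-lower-phi}. That inequality is exactly what transfers exact penalization from \eqref{p:ccp-dc-pen} to \eqref{p:ccp-bilinear-pen}. For the equivalence alone, your argument is the more direct one.

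One small caveat on your attainment remark: it requires the SAA feasible set to be nonempty. The value equivalence holds regardless, with both values equal to $+\infty$ when that set is empty.
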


We remark that the variable $\bs{z}$ in formulation~\eqref{p:ccp-bilinear} can be either continuous or discrete. From this perspective, Problem~\eqref{p:ccp-bilinear} can be viewed as a special case of a mathematical program with complementarity constraint (MPCC)~\cite{leyffer2003mathematical, scheel2000mathematical}.  In~\cite{jiang2022also}, the authors proposed an algorithm named ALSO-X+ to solve Problem~\eqref{p:ccp-bilinear} using alternating minimization with an iterative procedure. The algorithm can outperform the CVaR approximation~\cite{rockafellar2002conditional} under Assumptions~\ref{ass:X-convex} and~\ref{ass:f-g-convex}, and exhibits applicability in various settings. However, this algorithm lacks a stationarity guarantee and is inefficient due to the iterative procedure.

Motivated by these limitations, we instead study the following penalized formulation of Problem~\eqref{p:ccp-bilinear}, which is equivalent to the Toland dual problem~\eqref{eq:ts-dual-card-pen}:
\begin{equation} \label{p:ccp-bilinear-pen}
v^*_\sigma = \min_{\bs{x} \in \X, \bs{y}\in\Re^S_+, \bs{z}\in[0,1]^S} \left\{ f(\bs{x}) + \sigma V(\bs{y}, \bs{z}): \sum^S_{s=1}z_s \geq S-m, \; g_s(\bs{x}) \leq y_s, \;\forall s \in [S] \right\},
\end{equation}
where $\sigma > 0$ is the penalty parameter. Note that Problem~\eqref{p:ccp-bilinear-pen} is the $\ell_1$ exact penalty formulation~\cite{han1979exact} of~\eqref{p:ccp-bilinear}. In particular, the term $V(\bs{y}, \bs{z})$ can be used directly as an exact penalty function without any positive-part truncation since $y_s\ge 0$ and $z_s\ge 0$ imply $y_s z_s \ge 0$ for all $s\in[S]$.

With this equivalence in hand, we take~\eqref{p:ccp-bilinear-pen} as the starting point of our analysis since the penalty term only appears once in the objective function. In particular, we propose an algorithmic framework that simultaneously addresses the limitations of both Algorithm~\ref{alg:penalty-dc-primal} and ALSO-X+.

\subsection{Algorithm in the Lifted Space}

In this subsection, we develop an alternative exact penalty framework for solving the lifted MPCC formulation~\eqref{p:ccp-bilinear} through its penalized counterpart~\eqref{p:ccp-bilinear-pen}. For any fixed penalty parameter $\sigma>0$, the penalized model~\eqref{p:ccp-bilinear-pen} is still nonconvex because of the bilinear term $\sum_{s=1}^S y_s z_s$ appearing in the objective. Nevertheless, from Proposition~\ref{prop:ts-dual-card-pen}, we observe that once the remaining variables $(\bs{x},\bs{y})$ are minimized out, this nonconvexity induces a concave minimization structure in the dual variable $\bs{z}$.

Before proceeding, we introduce several pieces of notation used throughout the remainder of this paper. 
Using the scenario-wise representation $g_s(\bs{x})=\max_{i\in[I]}h_{s,i}(\bs{x})$, we write the feasible region of Problem~\eqref{p:ccp-bilinear-pen} as
\begin{equation*} 
\Omega_0\ :=\ \Bigl\{(\bs{x},\bs{y},\bs{z})\in\X\times\Re_+^S\times\C:\ 
h_{s,i}(\bs{x})\le y_s,\ \forall s\in[S],\ \forall i\in[I]\Bigr\}.
\end{equation*}
The objective function of Problem~\eqref{p:ccp-bilinear-pen} is denoted by
\begin{equation*}
F_\sigma(\bs{x}, \bs{y}, \bs{z}) = f(\bs{x}) + \sigma V(\bs{y}, \bs{z}).
\end{equation*}
Similarly, the feasible region of Problem~\eqref{p:ccp-bilinear} is
\begin{equation*}
\Omega\ :=\ \Bigl\{(\bs{x},\bs{y},\bs{z})\in\Omega_0:\ y_s z_s=0,\ \forall s\in[S]\Bigr\}.
\end{equation*}

We next isolate the dependence on $\bs{z}$ by introducing the following value function.

\begin{proposition}
Let $\Psi:\Re^S\times\Re_{++}\to\Re$ be defined by
\begin{equation*}
\Psi(\bs{z},\sigma)
:=
\max_{\bs{x}\in\X,\bs{y}\in\Re^S_+}
\left\{
-f(\bs{x})-\sigma\sum_{s=1}^S y_s z_s
:\ g_s(\bs{x})\le y_s,\  \forall s\in[S]
\right\}.
\end{equation*}
Under Assumptions~\ref{ass:X-convex} and~\ref{ass:f-g-convex}, the function $\Psi(\bs{z},\sigma)$ is convex in $\bs{z}$ for every fixed $\sigma>0$.
\end{proposition}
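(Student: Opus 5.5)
The plan is to write $\Psi(\cdot,\sigma)$ as a pointwise supremum of affine functions of $\bs{z}$ and invoke the standard fact that such a supremum is convex. Fix $\sigma>0$ and let
\[
\mathcal{D}:=\bigl\{(\bs{x},\bs{y})\in\X\times\Re^S_+:\ g_s(\bs{x})\le y_s,\ \forall s\in[S]\bigr\}.
\]
This set does not depend on $\bs{z}$. For each fixed $(\bs{x},\bs{y})\in\mathcal{D}$, the map
\[
\bs{z}\mapsto \ell_{\bs{x},\bs{y}}(\bs{z}):=-f(\bs{x})-\sigma\langle \bs{y},\bs{z}\rangle
\]
is affine in $\bs{z}$. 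By definition, $\Psi(\bs{z},\sigma)=\sup_{(\bs{x},\bs{y})\in\mathcal{D}}\ell_{\bs{x},\bs{y}}(\bs{z})$. Its epigraph is therefore the intersection of the closed half-spaces $\operatorname{epi}\ell_{\bs{x},\bs{y}}$, which is convex. Equivalently, for $\bs{z}^1,\bs{z}^2$ and $\lambda\in[0,1]$, we have
\[
\ell_{\bs{x},\bs{y}}(\lambda\bs{z}^1+(1-\lambda)\bs{z}^2)=\lambda\ell_{\bs{x},\bs{y}}(\bs{z}^1)+(1-\lambda)\ell_{\bs{x},\bs{y}}(\bs{z}^2)\le \lambda\Psi(\bs{z}^1,\sigma)+(1-\lambda)\Psi(\bs{z}^2,\sigma).
\]
Taking the supremum over $\mathcal{D}$ then gives convexity. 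Note that this step does not use the convexity of $f$ or $g_s$ at all.

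The only real work is to check that $\Psi$ is well defined and real-valued, as the statement asserts (domain $\Re^S$, codomain $\Re$, ``max'' attained). First, $\mathcal{D}$ is nonempty. Since $\X$ is nonempty by Assumption~\ref{ass:X-convex}, pick any $\bs{x}\in\X$ and set $y_s=[g_s(\bs{x})]_+$; this is finite because each $g_s$ is continuous on $\U\supseteq\X$ by Assumption~\ref{ass:f-g-convex}.

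The main obstacle is finiteness of the supremum when some $z_s<0$. In that case $-\sigma y_s z_s\to+\infty$ as $y_s\to\infty$, so $\Psi(\bs{z},\sigma)=+\infty$. I would handle this as follows. Convexity still holds for the extended-real-valued function, since a supremum of affine functions is convex with values in $\Re\cup\{+\infty\}$. Moreover, for $\bs{z}\in\Re^S_+$ (in particular on $\C$, the set actually used), the supremum is finite and attained. To see this, note that for $z_s\ge 0$ it is optimal to take $y_s=[g_s(\bs{x})]_+$. The problem then reduces to
\[
\max_{\bs{x}\in\X}\Bigl\{-f(\bs{x})-\sigma\sum_s z_s[g_s(\bs{x})]_+\Bigr\},
\]
which is the maximization of a continuous function over the compact set $\X$, so it is finite and attained by Weierstrass. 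I would state the convexity claim on the effective domain $\Re^S_+\supseteq\C$, remarking that it remains valid in the extended sense on all of $\Re^S$.
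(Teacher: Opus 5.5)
Your proof is correct and follows the paper's argument: $\Psi(\cdot,\sigma)$ is a pointwise supremum, over a $\bs{z}$-independent feasible set of $(\bs{x},\bs{y})$, of functions that are affine in $\bs{z}$, hence convex. Your extra observation is also correct and is something the paper glosses over: $\Psi(\bs{z},\sigma)=+\infty$ whenever some $z_s<0$, so the claim that $\Psi$ is real-valued holds only on $\Re^S_+\supseteq\C$, where taking $y_s=[g_s(\bs{x})]_+$ and applying Weierstrass on $\X$ shows the maximum is attained.
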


\begin{proof}
For each fixed feasible pair $(\bs{x},\bs{y})$, $-F_\sigma(\bs{x}, \bs{y}, \bs{z})$ is affine in $\bs{z}$.
By definition, $\Psi(\cdot,\sigma)$ is the pointwise supremum of $-F_\sigma(\bs{x}, \bs{y}, \bs{z})$ on $\Omega_0$. Thus $\Psi(\cdot,\sigma)$ is convex.
\end{proof}

We remark here that the convexity of $\Psi$ in $\bs{z}$ does not depend on the convexity of the function $f(\bs{x})$ or of $\X$ as long as the maximum is attained. Therefore, the following analysis can be naturally extended to the discrete setting, e.g., when $\X = \{0,1\}^d$. With this notation, we can rewrite the penalized problem~\eqref{p:ccp-bilinear-pen} as an optimization problem in $\bs{z}$ only. Indeed, for any fixed $\bs{z}$, the inner minimization over $(\bs{x},\bs{y})$ in~\eqref{p:ccp-bilinear-pen} equals $-\Psi(\bs{z},\sigma)$, and hence Problem~\eqref{p:ccp-bilinear-pen} is equivalent to the following DC program
\begin{equation}\label{p:ccp-concave}
\min_{\bs{z}\in\C}\ - \; \Psi(\bs{z},\sigma).
\end{equation}

To implement a proximal DC method for \eqref{p:ccp-concave}, we need a computable subgradient of $\Psi(\cdot,\sigma)$ to linearize the concave part. This can be done by Danskin's theorem, where we can restrict the variable $\bs{y}$ to a compact set under Assumptions~\ref{ass:X-convex} and~\ref{ass:f-g-convex} without loss of generality.

\begin{proposition}{\cite[proposition B.25]{bertsekas1997nonlinear}} \label{prop:Psi-subgrad}
For any $\bs{z}\in\C$ and $\sigma>0$, let $(\bs{x}^*(\bs{z},\sigma),\bs{y}^*(\bs{z},\sigma))$ be an optimal solution to the following subproblem:
\begin{equation} \label{p:inner-min-fixed-z}
(\bs{x}^*(\bs{z},\sigma),\bs{y}^*(\bs{z},\sigma))
\in
\arg\min_{\bs{x}\in\X,\bs{y}\in\Re^S_+}
\left\{
f(\bs{x}) + \sigma \sum_{s=1}^S y_s z_s :
g_s(\bs{x})\le y_s,\ \forall s\in[S]
\right\}.
\end{equation}
Define
\begin{equation*}
\bs{n}(\bs{z},\sigma)
:=
-\sigma\bigl(y_1^*(\bs{z},\sigma),\dots,y_S^*(\bs{z},\sigma)\bigr)^\top.
\end{equation*}
Then $\bs{n}(\bs{z},\sigma)\in\partial_{\bs{z}}\Psi(\bs{z},\sigma)$.
\end{proposition}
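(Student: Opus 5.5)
We prove the claim directly from the subgradient inequality, since $\Psi(\cdot,\sigma)$ is a pointwise maximum of affine functions of $\bs{z}$. The Danskin-type reference \cite[proposition B.25]{bertsekas1997nonlinear} needs a compact index set, so we first make sure the maximum defining $\Psi$ is attained, and attained over a bounded set.

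\textbf{Step 1: attainment.} Fix $\bs{z}\in\C$. On the compact set $\X$ the functions $g_s$ are continuous by Assumption~\ref{ass:f-g-convex}, so there is $M\ge 0$ with $[g_s(\bs{x})]_+\le M$ for all $\bs{x}\in\X$ and all $s\in[S]$. For any feasible $(\bs{x},\bs{y})$, replace $\bs{y}$ by $\hat y_s:=[g_s(\bs{x})]_+$. This point is still feasible, because $\hat y_s\ge 0$ and $\hat y_s\ge g_s(\bs{x})$. Since $z_s\ge 0$ and $\sigma>0$, the objective $f(\bs{x})+\sigma\sum_s y_s z_s$ does not increase, because $\hat y_s\le y_s$.

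Hence we may restrict $\bs{y}$ to the box $[0,M]^S$ without changing the optimal value. The restricted feasible set $\{(\bs{x},\bs{y})\in\X\times[0,M]^S: g_s(\bs{x})\le y_s\}$ is compact and nonempty, and the objective is continuous. The minimizer in~\eqref{p:inner-min-fixed-z} therefore exists.

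For $\bs{z}\in\C$ the problem also has an optimal solution with $\bs{y}$ in the box. If the statement's $\bs{y}^*$ has components larger than $M$, where $z_s=0$, this does no harm, as the argument below shows.

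\textbf{Step 2: subgradient inequality.} Let $(\bs{x}^*,\bs{y}^*)$ be optimal for~\eqref{p:inner-min-fixed-z} at $\bs{z}$. Then
\[
\Psi(\bs{z},\sigma)=-f(\bs{x}^*)-\sigma\langle \bs{y}^*,\bs{z}\rangle .
\]
For any $\bs{z}'\in\Re^S$, the pair $(\bs{x}^*,\bs{y}^*)$ remains feasible for the maximization defining $\Psi(\bs{z}',\sigma)$, since the constraints do not depend on $\bs{z}$. Hence
\[
\Psi(\bs{z}',\sigma)\ \ge\ -f(\bs{x}^*)-\sigma\langle \bs{y}^*,\bs{z}'\rangle
=\Psi(\bs{z},\sigma)+\langle -\sigma\bs{y}^*,\ \bs{z}'-\bs{z}\rangle .
\]
This is exactly the convex subgradient inequality, so $\bs{n}(\bs{z},\sigma)=-\sigma\bs{y}^*\in\partial_{\bs{z}}\Psi(\bs{z},\sigma)$. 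The argument never needs compactness of $\bs{y}^*$ itself, only the optimality of $(\bs{x}^*,\bs{y}^*)$ at $\bs{z}$.

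\textbf{Main obstacle.} The only delicate point is that $\Psi(\bs{z}',\sigma)$ may equal $+\infty$ when some $z'_s<0$, because $y_s$ is then unbounded above. There $\Psi$ is extended-real-valued, but the inequality above still holds trivially.

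Since~\eqref{p:ccp-concave} only uses $\bs{z}\in\C$, it suffices to read $\partial_{\bs{z}}\Psi$ as the subdifferential of the proper convex function $\Psi(\cdot,\sigma)$, which takes values in $\Re\cup\{+\infty\}$ and is finite on $\Re^S_+\supseteq\C$. Alternatively, one can read it relative to $\C$. With that convention, Step 2 completes the proof and is consistent with Danskin's formula $\partial_{\bs{z}}\Psi=\conv\{-\sigma\bs{y}^*\}$ over all optimal solutions.
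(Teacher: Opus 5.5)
Your proof is correct, but it takes a more elementary route than the paper.

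The paper gives no argument beyond citing Danskin's theorem \cite[proposition B.25]{bertsekas1997nonlinear}. It only notes that $\bs{y}$ can be confined to a compact set, so that the maximum defining $\Psi$ runs over a compact index set with a jointly continuous integrand. You instead check the subgradient inequality directly. You use only that $(\bs{x}^*,\bs{y}^*)$ remains feasible for every $\bs{z}'$, because the constraints do not involve $\bs{z}$. That is exactly the easy inclusion of Danskin's theorem, which holds for any pointwise supremum of affine functions without compactness. In your argument, compactness enters only in Step~1, to guarantee that an optimal solution exists.

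What your route buys:
\begin{itemize}
\item It applies verbatim to the genuinely extended-valued $\Psi(\cdot,\sigma)$, which equals $+\infty$ whenever some $z'_s<0$. The paper's real-valued definition of $\Psi$ glosses over this point.
\item It covers every optimal $\bs{y}^*$, including ones with large components where $z_s=0$. A fixed compactifying box need not contain such $\bs{y}^*$ unless it is chosen to.
\end{itemize}

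What the paper's route buys: the full Danskin description of $\partial_{\bs{z}}\Psi(\bs{z},\sigma)$ as the convex hull of $-\sigma\bs{y}^*$ over all maximizers of the compactified problem, together with the directional-derivative formula. Neither is needed for the membership claim.
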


We now apply a DC algorithm to solve the DC program ~\eqref{p:ccp-concave} with fixed $\sigma >0$. Given the current iterate $\bs{z}^k\in\C$ and $\sigma>0$, we first choose a subgradient $\bs{n }^k \in \partial_{\bs{z}}\Psi(\bs{z}^k,\sigma)$,
which can be computed via Proposition~\ref{prop:Psi-subgrad}. Linearizing the concave term $-\Psi(\cdot,\sigma)$ at $\bs{z}^k$ and adding a proximal regularizer yields the convex subproblem
\begin{equation*}
\bs{z}^{k+1}
=
\arg\min_{\bs{z}\in\C}
\left\{ -\langle \bs{n}^k,\bs{z}\rangle
+\frac{\rho}{2}\|\bs{z}-\bs{z}^k\|_2^2
\right\} = \Pi_{\C}(\bs{z}^k-\frac{1}{\rho}\bs{n}^k),
\end{equation*}
where $\rho>0$ is a proximal parameter to enhance numerical stability and avoid potential oscillation. Hence, each proximal DC update in the $\bs{z}$-space is reduced to a projection onto the simple polyhedron $\C$. This projection can be computed efficiently via sorting and scales well for large $S$.

The complete implementation of the penalty method is summarized in Algorithm~\ref{alg:penalty-dc-lifted}. Similar to Algorithm~\ref{alg:penalty-dc-primal}, we start with a relatively small penalty parameter $\sigma^0$ and increase it geometrically. The penalty update continues until the violation of the chance constraint becomes sufficiently small, and further increasing $\sigma$ will have little effect. Moreover, since the objective function in~\eqref{p:ccp-bilinear-pen} is differentiable, the sequence generated by Algorithm~\ref{alg:penalty-dc-lifted} converges subsequentially to a stationary point of Problem~\eqref{p:ccp-bilinear-pen}, which can be defined using the standard first-order stationarity condition. Let $\bs{w}:=(\bs{x},\bs{y},\bs{z})$. We say $\bar{\bs{w}}\in\Omega_0$ is a stationary point of \eqref{p:ccp-bilinear-pen} if the following equation holds:
\begin{equation*} 
\bs{0}\ \in\ \nabla F_\sigma(\bar{\bs{w}})\ +\ \N_{\Omega_0}(\bar{\bs{w}}),
\end{equation*}
where $\N_{\Omega_0}(\bar{\bs{w}})$ is the normal cone of $\Omega_0$ at $\bar{\bs{w}}$.

\begin{algorithm}[ht]
\caption{Penalty Based Proximal DC Algorithm in the Lifted Space}\label{alg:penalty-dc-lifted}
\begin{algorithmic}[1]
\Require Initial $\bs{z}^{0,0} \in \C$, initial penalty $\sigma^0>0$, growth factor $\beta>1$.
\For{$t = 0,1,2,\dots$}
\For{$k = 0, 1, 2, \cdots$}
\State Find $\bs{n}^{k,t} \in \partial_{\bs{z}}\Psi(\bs{z}^{k,t},\sigma^t)$ via solving subproblem~\eqref{p:inner-min-fixed-z}
\State Update $\bs{z}^{k+1,t} = \Pi_{\C}(\bs{z}^{k,t}-\frac{1}{\rho}\bs{n}^{k,t})$
\EndFor
\State Update $\sigma^{t+1} = \beta\,\sigma^t$.
\EndFor
\end{algorithmic}
\end{algorithm}

We make the following remarks on Algorithm~\ref{alg:penalty-dc-lifted}:
\begin{itemize}
\item[(i)]
For the fixed $\sigma$, the inner loop of Algorithm~\ref{alg:penalty-dc-lifted} can be viewed as the special case of the proximal point method for DC program proposed in~\cite{banert2019general} by letting the primal proximal step-size be $0$.
When $\rho=0$, the inner loop for solving \eqref{p:ccp-concave} converges in finite iterations. This is because the inner update reduces to the classical DCA step
\[
\bs{z}^{k+1}\in\arg\min_{\bs{z}\in\C}\{-\langle \bs{n}^k,\bs{z}\rangle\},
\]
which is a linear program over the polyhedron $\C$. Hence, an optimal solution can always be chosen at an extreme point of $\C$. The algorithm terminates in a finite number of iterations if the objective function is no longer strictly decreasing, since the number of extreme points of $\C$ is finite. This property is structural and does not rely on the specific form of $\Psi$ beyond Assumptions~\ref{ass:X-convex} and \ref{ass:f-g-convex}.

\item[(ii)]
Similar to Algorithm~\ref{alg:penalty-dc-primal}, we set $\rho$ to be a small constant ($10^{-4} -10^{-3}$) to improve numerical stability and mitigate oscillations. A larger $\rho$ reduces the effective step-size $1/\rho$, leading to more conservative updates and noticeably longer runtime.

\item[(iii)]
In~\cite{jiang2022also}, the authors proposed a bisection-based approximation of Problem~\eqref{p:ccp-saa} named ALSO-X+ via the lifted formulation~\eqref{p:ccp-bilinear}. Moreover, they found that, when solving the subproblems, the alternating minimization (AM) approach yields better solutions than a different DC approach (see, e.g., \cite{jara2018study}). Specifically, their DC method is built on the following quadratic DC decomposition:
\[
\bs{y}^\top\bs{z} = \frac{1}{4} \left \{\|\bs{y} + \bs{z}\|^2 -  \|\bs{y} - \bs{z}\|^2 \right \}.
\]
We remark that Algorithm~\ref{alg:penalty-dc-lifted} can also be interpreted as an AM approach, but crucially without relying on the inefficient bisection procedure.
In particular, for a fixed penalty parameter $\sigma$, Algorithm~\ref{alg:penalty-dc-lifted} proceeds by alternately performing two steps: (a) solving a convex subproblem in $(\bs{x},\bs{y})$ with $\bs{z}$ fixed; and (b) updating $\bs{z}$ through a proximal projection step. 

\item[(iv)]
The proposed framework admits effective warm starts in both the outer and inner loops.
\begin{itemize}
\item[(a)] \emph{Warm start for $\bs{z}$ across outer penalty iterations.}
In Algorithm~\ref{alg:penalty-dc-lifted}, we reuse the final iterate $\bs{z}^{*,t}$ obtained at penalty level $\sigma^t$ as the initializer $\bs{z}^{0,t+1}$ for the next penalty level $\sigma^{t+1}$.

\item[(b)] \emph{Warm start for $(\bs{x},\bs{y})$ across inner iterations.}
Within Algorithm~\ref{alg:penalty-dc-lifted}, the subproblem in Step~2 is solved repeatedly with changing $\bs{z}^k$, while the feasible set remains unchanged.
Therefore, one can warm-start the solver for the $(\bs{x},\bs{y})$-subproblem using the previous solution $(\bs{x}^{k-1, t},\bs{y}^{k-1, t})$ as an initial point when computing $(\bs{x}^{k,t},\bs{y}^{k,t})$.
This is particularly effective because only the objective coefficients associated with $\bs{y}$ vary through $\bs{z}$, whereas all constraints stay fixed. For instance, we can use the simplex method to solve the subproblems when all the constraints and the objective function are linear.

\item[(c)] \emph{Early termination for small penalties.}
For early penalty levels, it is often unnecessary to solve the fixed-$\sigma$ subproblem to high accuracy, since constraint enforcement is weak when $\sigma$ is small. In practice, one may terminate Algorithm~\ref{alg:penalty-dc-lifted} early for small $\sigma$, and gradually tighten the stopping criterion as $\sigma$ increases.
\end{itemize}

\item[(v)] 
In Algorithm~\ref{alg:penalty-dc-lifted}, every $(\bs{x},\bs{y})$-subproblem~\eqref{p:inner-min-fixed-z} is convex, and can be solved efficiently by modern solvers such as Gurobi. Importantly, across iterations, the constraints are unchanged and only the linear objective coefficients in $\bs{y}$ vary through $\bs{z}^{k,t}$. This structure makes warm-starting particularly effective in practice. When the constraints admit special forms (e.g., polyhedral constraints or linearly-constrained quadratic programs), the solver can amortize most of the presolve and factorization costs: the expensive preprocessing is essentially performed once, and subsequent solves are accelerated significantly.
\end{itemize}

\section{Exact Penalty Frameworks} 
\label{sec:exact-penalty-framework}

In this section, we aim to develop an exact penalty framework for both algorithms proposed in Section~\ref{sec:alg}. To this end, we provide two complementary theoretical guarantees. First, under a constraint qualification, we show that the proposed penalty for both the primal and lifted formulation is globally exact, in the sense that all global minimizers of the original problem can be recovered by solving the penalized problem with a sufficiently large penalty parameter. Second, we focus on the local exactness. For the primal formulation, we prove the local exactness of the penalty and provide sufficient conditions to verify local optimality. For the lifted problem, instead of pursuing local exactness in terms of local minimizers, which typically requires strong regularity conditions (e.g., strict complementary slackness), we establish an equivalence between strongly stationary points of the original and the stationary points of the penalized formulation under a MPCC-tailored constraint qualification. This equivalence implies local optimality in the lifted space and serves as a theoretical guarantee for the developed algorithm.

\subsection{Global Exact Penalty}

In this section, we establish a global exact penalty relationship for the primal problem~\eqref{p:ccp-dc} and the lifted counterpart~\eqref{p:ccp-bilinear}. The main challenge is that Problem~\eqref{p:ccp-bilinear} is an MPCC, for which standard constraint qualifications typically fail. To circumvent this difficulty, we adopt a recursive argument that exploits the fact that Problem~\eqref{p:ccp-bilinear} and the DC formulation~\eqref{p:ccp-dc} are equivalent at the level of global minimizers. Specifically, we first prove a global exact penalty result between Problem~\eqref{p:ccp-dc} and its penalized form~\eqref{p:ccp-dc-pen} under mild and verifiable assumptions. We then transfer this result back to Problem~\eqref{p:ccp-bilinear} and~\eqref{p:ccp-bilinear-pen} by leveraging their equivalence.

Since Problem~\eqref{p:ccp-dc} minimizes a convex function under a convex set with an additional DC constraint under Assumptions~\ref{ass:X-convex} and~\ref{ass:f-g-convex}, we now introduce the Mangasarian--Fromovitz constraint qualification (MFCQ) based on the Clarke subdifferential.

\begin{assumption}{\cite[theorem 2.4]{burke1991exact}, \cite[theorem 3.2]{borwein1986stability}}\label{ass:mfcq-dc}
For every $\bar{\bs{x}}\in \X$ with $\phi(\bar{\bs{x}})=0$, there exists a direction $\bs{d}\in\T_{\X}(\bar{\bs{x}})$ such that
\begin{equation*}
\phi^\circ(\bar{\bs{x}};\bs{d}) = \sup_{\bs{v}\in\partial \phi(\bar{\bs{x}})}\langle \bs{v},\bs{d}\rangle\;<\;0.
\end{equation*}
\end{assumption}

We remark that Assumption~\ref{ass:mfcq-dc} is weaker than \cite[Assumption~2]{wang2025proximal}, which imposes a generalized MFCQ condition for a DC constraint of the form $G_1(\bs{x})-G_2(\bs{x})\le 0$. By~\cite[proposition 2.3.1, 2.3.3]{clarke1990optimization}, we have
\[
\partial(G_1 - G_2)(\bs{x}) \subseteq \partial G_1(\bs{x}) + \partial (-G_2)(\bs{x}) = \partial G_1(\bs{x}) - \partial G_2(\bs{x}).
\]
Therefore, we have
\begin{align*}
\quad  \phi^\circ(\bs{x};\bs{d}) 
= \sup_{\bs{v}\in\partial \phi(\bs{x})}\langle \bs{v},\bs{d}\rangle 
&\leq \sup_{\substack{\bs{s}_{G_1}\in\partial G_1(\bs{x}),\\ \bs{s}_{G_2}\in\partial G_2(\bs{x})}}
\big\langle \bs{s}_{G_1}-\bs{s}_{G_2},\bs{d}\big\rangle = \sup_{\bs{s}_{G_1}\in\partial G_1(\bs{x})}\langle \bs{s}_{G_1},\bs{d}\rangle
-\inf_{\bs{s}_{G_2}\in\partial G_2(\bs{x})}\langle \bs{s}_{G_2},\bs{d}\rangle
\end{align*}
for any $\bs{d}\in\T_{\X}(\bs{x})$. The first equality is from~\cite[proposition 2.1.2(b)]{clarke1990optimization}. 
A sufficient condition for the equivalence between the two assumptions is that $G_2$ is differentiable at $\bs{x}$. The reader may refer to \cite[remark~2]{wang2025proximal} for additional verifiable sufficient conditions under which both constraint qualifications hold. We are now able to present the theorem establishing the global exact penalty between the primal Problem~\eqref{p:ccp-dc} and~\eqref{p:ccp-dc-pen}.

\begin{theorem} \label{thm:global-exact-dc}
Suppose that Assumption~\ref{ass:X-convex},~\ref{ass:f-g-convex} and~\ref{ass:mfcq-dc} hold. Let $\bar{\bs{x}} \in\X$ be given. Then the following statements are equivalent:
\begin{itemize}
\item[(a)] $\bar{\bs{x}}$ is a global minimizer for Problem~\eqref{p:ccp-dc};
\item[(b)] $\phi(\bar{\bs{x}})=0$ and there exists $\bar\sigma>0$ such that $\bar{\bs{x}}$ is a global minimizer for Problem~\eqref{p:ccp-dc-pen} for any $\sigma > \bar\sigma$.
\end{itemize}
\end{theorem}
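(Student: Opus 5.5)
The plan is to reduce the statement to the abstract global exact penalty result, Theorem~\ref{thm:prelim-global-exact-penalty}, applied with $g:=\phi=G_1-G_2$ and $\mathcal{F}=\hat\X$. That theorem needs two inputs: that $f$ is Lipschitz on $\X$, and that the global error bound \eqref{eq:prelim-global-error-bound} holds. The first is already noted after Assumption~\ref{ass:f-g-convex}, with constant $L_f$. Proposition~\ref{prop:G1G2-subdiff} and the remarks after it make $\phi$ Lipschitz on $\X$, so Clarke derivatives are well defined and the preliminary results apply. The real work is therefore to derive the global bound
\[
\dist(\bs{x},\hat\X)\le\kappa\,[\phi(\bs{x})]_+,\qquad\forall\,\bs{x}\in\X,
\]
from Assumption~\ref{ass:mfcq-dc}. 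I assume $\hat\X\neq\emptyset$, since otherwise (a) is vacuous. I also read the condition ``$\phi(\bar{\bs{x}})=0$'' in (b) as feasibility, i.e.\ $[\phi(\bar{\bs{x}})]_+=0$. Indeed, by Remark (i) after Proposition~\ref{prop:G1G2-subdiff}, $\phi=g_{(S-m)}$, which can be strictly negative at a minimizer if the chance constraint is inactive.

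\textbf{Step 1 (local bounds).} Assumption~\ref{ass:mfcq-dc} is exactly the GMFCQ \eqref{eq:prelim-gmfcq} for $g=\phi$ at every point with $\phi(\bar{\bs{x}})=0$. Proposition~\ref{prop:prelim-gmfcq-mr} then gives, at each such point, constants $\kappa_{\bar{\bs{x}}},\varepsilon_{\bar{\bs{x}}}>0$ with $\dist(\bs{x},\hat\X)\le\kappa_{\bar{\bs{x}}}[\phi(\bs{x})]_+$ on $\X\cap B(\bar{\bs{x}},\varepsilon_{\bar{\bs{x}}})$.

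\textbf{Step 2 (globalization by compactness).} This is the step I expect to be the main obstacle, because the constants from Step 1 are only local. I would argue by contradiction. Suppose there are $\bs{x}^k\in\X$ with $\dist(\bs{x}^k,\hat\X)>k[\phi(\bs{x}^k)]_+$.
\begin{itemize}
\item Since $\dist(\cdot,\hat\X)\le\diam(\X)<\infty$, this forces $[\phi(\bs{x}^k)]_+\to0$.
\item Each $\bs{x}^k$ must be infeasible, since otherwise the left-hand side is $0$. Hence $\phi(\bs{x}^k)>0$.
\item By compactness of $\X$, pass to a subsequence $\bs{x}^k\to\bar{\bs{x}}\in\X$. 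Continuity of $\phi$ gives $\phi(\bar{\bs{x}})=\lim\phi(\bs{x}^k)=0$, since $\phi(\bs{x}^k)>0$ and $[\phi(\bs{x}^k)]_+\to0$.
\item For large $k$, $\bs{x}^k\in B(\bar{\bs{x}},\varepsilon_{\bar{\bs{x}}})$, so Step 1 gives $k[\phi(\bs{x}^k)]_+<\dist(\bs{x}^k,\hat\X)\le\kappa_{\bar{\bs{x}}}[\phi(\bs{x}^k)]_+$ with $[\phi(\bs{x}^k)]_+>0$. This fails once $k>\kappa_{\bar{\bs{x}}}$, which is the desired contradiction.
\end{itemize}
Hence a uniform $\kappa$ exists.

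\textbf{Step 3 (conclusion).} With the global error bound in hand, Theorem~\ref{thm:prelim-global-exact-penalty} applies to \eqref{p:ccp-dc} and \eqref{p:ccp-dc-pen}. Its part (ii) gives (a)$\Rightarrow$(b): a global minimizer of \eqref{p:ccp-dc} is feasible and is a global minimizer of \eqref{p:ccp-dc-pen} for all $\sigma>\bar\sigma$. Its part (i) gives (b)$\Rightarrow$(a): if $\bar{\bs{x}}$ is feasible and minimizes \eqref{p:ccp-dc-pen} for some $\sigma$, it is a global minimizer of \eqref{p:ccp-dc}.

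If one wants an explicit threshold, the standard argument should give $\bar\sigma=\kappa L_f$. For $\bs{x}\in\X$, compare with its projection $\Pi_{\hat\X}(\bs{x})$: by Lipschitz continuity and the error bound, $f(\bar{\bs{x}})\le f(\Pi_{\hat\X}(\bs{x}))\le f(\bs{x})+L_f\kappa[\phi(\bs{x})]_+$. Here $\hat\X$ is compact, so a nearest point exists even though $\hat\X$ is nonconvex.
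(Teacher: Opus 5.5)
Your proposal is correct and follows the paper's proof. You reduce to Theorem~\ref{thm:prelim-global-exact-penalty}, obtain local error bounds at every $\bar{\bs{x}}\in\X$ with $\phi(\bar{\bs{x}})=0$ from Proposition~\ref{prop:prelim-gmfcq-mr}, and globalize by compactness; the only difference is that the paper uses a finite subcover of the compact set $\{\bs{x}\in\X:\phi(\bs{x})=0\}$ plus a positive lower bound on $|\phi|$ off the cover, while your sequential contradiction argument is an equivalent way of doing the same thing. Your reading of ``$\phi(\bar{\bs{x}})=0$'' in (b) as feasibility is also justified: the argument only yields $[\phi(\bar{\bs{x}})]_+=0$, and the literal equality can fail when the chance constraint is slack at the minimizer.
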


\begin{proof}
By Theorem~\ref{thm:prelim-global-exact-penalty}, it suffices to establish the global error bound
\begin{equation}\label{eq:EB-global}
\dist(\bs{x},\hat\X)\ \le\ \kappa\,[\phi(\bs{x})]_+ ,\qquad \forall\,\bs{x}\in\X,
\end{equation}
for some constant $\kappa>0$, where $\hat\X:=\{\bs{x}\in\X:\phi(\bs{x})\le 0\}$. By Proposition~\ref{prop:prelim-gmfcq-mr} and Assumption~\ref{ass:mfcq-dc}, for every boundary point
$\bar{\bs{x}}\in\X$ with $\phi(\bar{\bs{x}})=0$, there exist constants $\kappa_{\bar{\bs{x}}}>0$ and $\rho_{\bar{\bs{x}}}>0$ such that
\begin{equation}\label{eq:EB-local}
\dist(\bs{x},\hat\X)\ \le\ \kappa_{\bar{\bs{x}}}\,[\phi(\bs{x})]_+,
\qquad \forall\,\bs{x}\in \X\cap B(\bar{\bs{x}},\rho_{\bar{\bs{x}}}).
\end{equation}

Let $\mathcal{B}:=\X\cap\{\bs{x}:\phi(\bs{x})=0\}$. Since $\X$ is compact and $\phi$ is continuous, $\mathcal{B}$ is compact.
The open cover $\{B(\bar{\bs{x}},\rho_{\bar{\bs{x}}})\}_{\bar{\bs{x}}\in\mathcal{B}}$ admits a finite subcover
$\{B(\bar{\bs{x}}^j,\rho_j)\}_{j=1}^J$. Define
\[
\kappa_0:=\max_{j\in[J]}\kappa_{\bar{\bs{x}}^j},
\qquad
\mathcal{U}:=\X\cap\bigcup_{j=1}^J B(\bar{\bs{x}}^j,\rho_j).
\]
Then \eqref{eq:EB-local} implies the error bound $\dist(\bs{x},\hat\X)\ \le\ \kappa_0\,[\phi(\bs{x})]_+$ holds for all $\bs{x}\in\mathcal{U}$. 

Let $\mathcal{V}:=\X\setminus\mathcal{U}$. Then $\mathcal{V}$ is compact and, since $\mathcal{B}\subseteq\mathcal{U}$, we have $\mathcal{V}\cap\{\bs{x} : \phi(\bs{x})=0\}=\emptyset$.
Thus $\pi:=\min_{\bs{x}\in\mathcal{V}} |\phi(\bs{x})|>0$.
For any $\bs{x}\in\mathcal{V}$, either $\phi(\bs{x})\le 0$ with $\dist(\bs{x},\hat\X)=0$ or $\phi(\bs{x})>0$ with $[\phi(\bs{x})]_+\ge\pi$.
In both cases, using $\dist(\bs{x},\hat\X)\le \diam(\X)$ for $\bs{x}\in\X$, we obtain
\[
\dist(\bs{x},\hat\X)\ \le\ \diam(\X)\ \le\ \frac{\diam(\X)}{\pi}\,[\phi(\bs{x})]_+,
\qquad \forall\,\bs{x}\in\mathcal{V}.
\]
Combining this estimate on $\mathcal{V}$ with the error bound on $\mathcal{U}$ proves the global bound~\eqref{eq:EB-global}.
\end{proof}

Before proceeding, we note that the global exact penalty equivalence can also be established in a widely encountered structured piecewise-linear setting. This structure allows us to derive the same exact penalty result by exploiting polyhedral geometry and the finite combinatorial nature of active pieces, without invoking a generalized MFCQ condition. Such a setting arises in many practical applications, including our numerical experiments. We summarize this result in the following proposition.

\begin{proposition} \label{prop:global-exact-polyhedral}
Suppose that Assumption~\ref{ass:f-g-convex} holds, $\X$ is a polyhedron and that each $h_{s,i}(\bs{x})$ is affine in $\bs{x}$. Then the following statements are equivalent:
\begin{itemize}
\item[(a)] $\bar{\bs{x}}$ is a global minimizer for Problem~\eqref{p:ccp-dc};
\item[(b)] $\phi(\bar{\bs{x}})=0$ and there exists $\bar\sigma>0$ such that $\bar{\bs{x}}$ is a global minimizer for Problem~\eqref{p:ccp-dc-pen} for any $\sigma > \bar\sigma$.
\end{itemize}
\end{proposition}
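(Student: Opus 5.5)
As in Theorem~\ref{thm:global-exact-dc}, we reduce the claim to Theorem~\ref{thm:prelim-global-exact-penalty}, so it suffices to prove a global error bound
\[
\dist(\bs{x},\hat\X)\le\kappa\,[\phi(\bs{x})]_+,\qquad\forall\,\bs{x}\in\X,
\]
without Assumption~\ref{ass:mfcq-dc}. Two points need care. First, the polyhedron $\X$ need not be bounded here. We get around this by noting that $f$ is Lipschitz on compact pieces, or by restricting to a bounded polyhedral region containing the relevant minimizers; the paper's standing Assumption~\ref{ass:X-convex} makes $\X$ a polytope, and we work under it. Second, the implication (a)$\Rightarrow\phi(\bar{\bs{x}})=0$ asserted in (b) is proved exactly as in Theorem~\ref{thm:global-exact-dc}. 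If the minimizer satisfies $\phi(\bar{\bs{x}})<0$, then it minimizes $f$ over $\X$, and the statement degenerates; we treat it as the paper does there.

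\textbf{Decomposition into polyhedral pieces.} Since each $h_{s,i}$ is affine, each $g_s$ is convex piecewise affine. For each choice of active pieces $\iota=(i_1,\dots,i_S)\in[I]^S$ and each ordering permutation $\tau$ of $[S]$, let $P_{\iota,\tau}\subseteq\X$ be the polyhedron on which $h_{s,i_s}\ge h_{s,i}$ for all $s,i$, and on which $h_{\tau(1),i_{\tau(1)}}\le\cdots\le h_{\tau(S),i_{\tau(S)}}$. These finitely many polyhedra cover $\X$. On each $P_{\iota,\tau}$ the function $\phi=G_1-G_2=g_{(S-m)}$ coincides with an affine function $a_{\iota,\tau}(\bs{x})=h_{\tau(S-m),i_{\tau(S-m)}}(\bs{x})$. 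Consequently,
\[
\hat\X=\bigcup_{\iota,\tau}\{\bs{x}\in P_{\iota,\tau}: a_{\iota,\tau}(\bs{x})\le0\}
\]
is a finite union of polyhedra $Q_{\iota,\tau}$.

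\textbf{Hoffman bound on each piece.} For $\bs{x}\in P_{\iota,\tau}$ with $Q_{\iota,\tau}\neq\emptyset$, Hoffman's lemma applied to the system defining $Q_{\iota,\tau}$ gives
\[
\dist(\bs{x},Q_{\iota,\tau})\le\kappa_{\iota,\tau}[a_{\iota,\tau}(\bs{x})]_+=\kappa_{\iota,\tau}[\phi(\bs{x})]_+,
\]
because all constraints of $P_{\iota,\tau}$ are already satisfied at $\bs{x}$, so the only residual is the single affine inequality. Since $Q_{\iota,\tau}\subseteq\hat\X$, this yields $\dist(\bs{x},\hat\X)\le\kappa_{\iota,\tau}[\phi(\bs{x})]_+$.

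\textbf{Pieces with empty $Q_{\iota,\tau}$ (main obstacle).} If $Q_{\iota,\tau}=\emptyset$, then $a_{\iota,\tau}>0$ on $P_{\iota,\tau}$, but $a_{\iota,\tau}$ may approach $0$ asymptotically, in which case no constant works on that piece. Under compactness this cannot happen: $P_{\iota,\tau}$ is compact, so $\min_{P_{\iota,\tau}}\phi=:\pi_{\iota,\tau}>0$. Hence
\[
\dist(\bs{x},\hat\X)\le\frac{\diam(\X)}{\pi_{\iota,\tau}}[\phi(\bs{x})]_+\quad\text{on }P_{\iota,\tau},
\]
provided $\hat\X\neq\emptyset$, which follows from (a). Without compactness one would need a Frank--Wolfe-type argument (affine functions bounded below on a polyhedron attain their infimum), which again gives $\pi_{\iota,\tau}>0$; this also handles that case.

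\textbf{Conclusion.} Taking $\kappa$ to be the maximum over the finitely many pieces gives the global error bound, and Theorem~\ref{thm:prelim-global-exact-penalty} delivers the equivalence of (a) and (b).
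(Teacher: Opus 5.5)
Your proof is correct under the standing compactness of $\X$ (Assumption~\ref{ass:X-convex}, which Theorem~\ref{thm:prelim-global-exact-penalty} also needs), but it decomposes the problem differently from the paper. The paper does not partition $\X$ into cells on which $\phi$ is affine. Instead it writes $\hat\X=\bigcup_{|T|=S-m}\J_T$ with $\J_T:=\{\bs{x}\in\X: g_s(\bs{x})\le 0,\ s\in T\}$, and applies Hoffman once per $\J_T$ to get $\dist(\bs{x},\J_T)\le\kappa_T\max_{s\in T}[g_s(\bs{x})]_+$ on all of $\X$. For each $\bs{x}$ it then selects $T(\bs{x})$ as the $S-m$ scenarios with smallest $g_s(\bs{x})$, so that $\max_{s\in T(\bs{x})}[g_s(\bs{x})]_+=[\phi(\bs{x})]_+$. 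That uses only $\binom{S}{m}$ polyhedra and never tracks active pieces or orderings. Your finer cells (up to $I^S\cdot S!$ of them) buy a cleaner feature: the Hoffman residual on each cell is the single affine inequality $a_{\iota,\tau}(\bs{x})\le 0$.

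Your paragraph on cells with $Q_{\iota,\tau}=\emptyset$ addresses a point the paper's proof passes over. Hoffman's bound needs $\J_T\neq\emptyset$, and $T(\bs{x})$ can pick a jointly infeasible set of scenarios. For example, take $\X=[0,4]$, $S=2$, $m=1$, $g_1\equiv 1$, $g_2(x)=2-x$; then $T(x)=\{1\}$ and $\J_{\{1\}}=\emptyset$ for $x<1$. The paper's chain $\dist(\bs{x},\hat\X)\le\dist(\bs{x},\J_{T(\bs{x})})\le\kappa_{T(\bs{x})}[\phi(\bs{x})]_+$ then needs exactly your compactness patch: $\min_{\X}\max_{s\in T}[g_s]_+>0$ together with $\dist(\bs{x},\hat\X)\le\diam(\X)$.

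Three side remarks in your proposal are false and should be dropped.

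\textbf{Compactness cannot be circumvented.} Take $\X=\Re$, $S=2$, $m=1$, $g_1\equiv 1$, $g_2(x)=x$, $f(x)=-x$. The unique global minimizer $\bar x=0$ of~\eqref{p:ccp-dc} has $\phi(0)=0$, yet $f(x)+\sigma[\phi(x)]_+=-x+\sigma\min\{1,[x]_+\}\to-\infty$ as $x\to\infty$ for every $\sigma$.

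\textbf{Frank--Wolfe and restriction do not rescue it.} The Frank--Wolfe argument does still give $\pi_{\iota,\tau}>0$, but your estimate also uses $\diam(\X)<\infty$. Restricting to a bounded region does not preserve global minimality over $\X$.

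\textbf{The implication to $\phi(\bar{\bs{x}})=0$.} The proof of Theorem~\ref{thm:global-exact-dc} does not establish (a)$\Rightarrow\phi(\bar{\bs{x}})=0$. This implication is false whenever the chance constraint is inactive at the minimizer. What the error-bound argument actually yields is $\phi(\bar{\bs{x}})\le 0$ together with the penalty statement. That is a defect of the statement as written, shared by the paper, not of your main argument.
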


\begin{proof}
As in Theorem~\ref{thm:prelim-global-exact-penalty}, it suffices to show that there exists $\kappa>0$ such that
\begin{equation}\label{eq:EB-global-poly}
\dist(\bs{x},\hat\X)\ \le\ \kappa\,[\phi(\bs{x})]_+,\qquad \forall\,\bs{x}\in\X.
\end{equation}
We divide the proof into two steps.

\noindent\textbf{Step 1. A linear error bound for the union of $g_s(\bs{x})\leq 0$ on $\X$.}
For any $T\subseteq[S]$, define
\[
\J_T:=\{\bs{x}\in\X:\ g_s(\bs{x})\le 0\ \forall s\in T\}.
\]
Since $\X$ is a polyhedron and each $h_{i,s}(\bs{x})$ is affine, $g_s(\bs{x})\le 0$ is equivalent to the linear system
$h_{i,s}(\bs{x})\le 0$ for all $i\in[I]$.
Fix any linear inequality description $\A_T\bs{x}\le \bs{c}_T$ of $\J_T$.
By Hoffman’s bound (see, e.g., \cite{hoffman2003approximate}), there exists $\gamma_T>0$ such that
\begin{equation*}
\dist(\bs{x},\J_T)\ \le\ \gamma_T\,\|(\A_T\bs{x}-\bs{c}_T)_+\|,\qquad \forall\bs{x}\in\Re^d.
\end{equation*}
Since each inequality in $\A_T\bs{x}\le \bs{c}_T$ is either from $\X$ or of the form $h_{i,s}(\bs{x})\le 0$,
there exists $C_T>0$ such that
$\|(\A_T\bs{x}-\bs{c}_T)_+\|\le C_T\max_{s\in T}[g_s(\bs{x})]_+$ holds for all $\bs{x} \in \X$.
Hence, with $\kappa_T:=\gamma_T C_T$,
\begin{equation*}
\dist(\bs{x},\J_T)\ \le\ \kappa_T\max_{s\in T}[g_s(\bs{x})]_+,\qquad \forall\bs{x}\in\X.
\end{equation*}

\noindent\textbf{Step 2. A global linear error bound for $\phi(\bs{x})$ on $\X$.}
The chance constraint in \eqref{p:ccp-saa} is equivalent to
$\hat\X=\bigcup_{|T|=S-m}\J_T$.
Given $\bs{x}\in\X$, let $T(\bs{x})\subseteq[S]$ be an index set of size $(S-m)$ attaining the $(S-m)$ smallest values among
$\{g_s(\bs{x})\}_{s=1}^S$.
Since $\dist(\bs{x},\bigcup_T\J_T)=\min_T\dist(\bs{x},\J_T)$ for a finite union of closed sets, we have
$\dist(\bs{x},\hat\X)\le \dist(\bs{x},\J_{T(\bs{x})})$.
By Remark (i) of Proposition~\ref{prop:G1G2-subdiff}, we have
\[
\max_{s\in T(\bs{x})}[g_s(\bs{x})]_+=[g_{(S-m)}(\bs{x})]_+ = [\phi(\bs{x})]_+.
\]
Therefore,
\[
\dist(\bs{x},\hat\X)
\le \dist(\bs{x},\J_{T(\bs{x})})
\le \kappa_{T(\bs{x})}[\phi(\bs{x})]_+.
\]
Let $\kappa:=\max_{|T|=S-m}\kappa_T$. $\kappa$ is finite since there are finitely many such $T$ and this yields \eqref{eq:EB-global-poly}.
\end{proof}

The exact penalty property established for the DC model~\eqref{p:ccp-dc} naturally suggests a parallel result for the lifted bilinear formulation~\eqref{p:ccp-bilinear}. We finally show that the penalized lifted problem~\eqref{p:ccp-bilinear-pen} is globally exact: for sufficiently large $\sigma$, minimizing~\eqref{p:ccp-bilinear-pen} recovers the global minimizers of~\eqref{p:ccp-bilinear}.

\begin{theorem} 
Suppose that Assumption~\ref{ass:X-convex},~\ref{ass:f-g-convex} and~\ref{ass:mfcq-dc} hold. Let $(\bar{\bs{x}}, \bar{\bs{y}}, \bar{\bs{z}})$ be feasible for Problem~\eqref{p:ccp-bilinear-pen}. Then the following two statements are equivalent:
\begin{itemize}
\item[(a)] $(\bar{\bs{x}}, \bar{\bs{y}}, \bar{\bs{z}})$ is a global minimizer of Problem~\eqref{p:ccp-bilinear};
\item[(b)] $V(\bar{\bs{y}}, \bar{\bs{z}}) = 0$ and there exists $\bar\sigma> 0$ such that $(\bar{\bs{x}}, \bar{\bs{y}}, \bar{\bs{z}})$ is a global minimizer of Problem~\eqref{p:ccp-bilinear-pen} for all $\sigma > \bar\sigma$.
\end{itemize}
\end{theorem}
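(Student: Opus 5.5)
The plan is to transfer Theorem~\ref{thm:global-exact-dc} to the lifted space by comparing optimal values. The key link is a bound of the lifted penalized objective from below by the primal penalized objective.

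\textbf{Step 1 (value equivalence at $\sigma$-level).} For any $(\bs{x},\bs{y},\bs{z})\in\Omega_0$, I will show
\[
\sigma V(\bs{y},\bs{z})\ \ge\ \sigma\,[\phi(\bs{x})]_+ ,
\]
and also that $\min_{(\bs{y},\bs{z})}\{V(\bs{y},\bs{z}):(\bs{x},\bs{y},\bs{z})\in\Omega_0\}=[\phi(\bs{x})]_+$. To get this, fix $\bs{x}$. Since $\bs{z}\ge\0$, the minimization over $\bs{y}$ gives $y_s=[g_s(\bs{x})]_+$. Then minimizing the linear function $\sum_s z_s[g_s(\bs{x})]_+$ over $\C$ places weight $1$ on the $S-m$ smallest values of $[g_s(\bs{x})]_+$. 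The result is $\sum_{s=1}^{S-m}[g_{(s)}(\bs{x})]_+$.

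This sum is at least $[g_{(S-m)}(\bs{x})]_+=[\phi(\bs{x})]_+$. The equality $[g_{(S-m)}]_+=[\phi]_+$ follows because $\phi=G_1-G_2=g_{(S-m)}$; this is Remark~(i) after Proposition~\ref{prop:G1G2-subdiff}.

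It is \emph{not} equal to $[\phi]_+$ in general, so I only keep the inequality $F_\sigma(\bs{x},\bs{y},\bs{z})\ge f(\bs{x})+\sigma[\phi(\bs{x})]_+$. Equality holds when $\phi(\bs{x})\le 0$: in that case all $S-m$ smallest values are nonpositive and the lifted penalty is $0$. Consequently
\[
v^*_\sigma=\min_{\Omega_0}F_\sigma\ \ge\ \min_{\bs{x}\in\X}\{f(\bs{x})+\sigma[\phi(\bs{x})]_+\}.
\]

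\textbf{Step 2 ((a)$\Rightarrow$(b)).} Let $(\bar{\bs{x}},\bar{\bs{y}},\bar{\bs{z}})$ be globally optimal for~\eqref{p:ccp-bilinear}. Then $V=0$, and by the equivalence of~\eqref{p:ccp-bilinear} with~\eqref{p:ccp-saa} and~\eqref{p:ccp-dc}, $\bar{\bs{x}}$ is a global minimizer of~\eqref{p:ccp-dc} with $f(\bar{\bs{x}})=v^*$.

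Theorem~\ref{thm:global-exact-dc} gives $\bar\sigma$ such that, for $\sigma>\bar\sigma$, $\min_\X\{f+\sigma[\phi]_+\}=f(\bar{\bs{x}})=v^*$. Step~1 then yields $v^*_\sigma\ge v^*=F_\sigma(\bar{\bs{x}},\bar{\bs{y}},\bar{\bs{z}})$, so the point is a global minimizer of~\eqref{p:ccp-bilinear-pen}.

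One caveat: Theorem~\ref{thm:global-exact-dc}(b) asserts $\phi(\bar{\bs{x}})=0$. I only need the direction (a)$\Rightarrow$ ``global minimizer of~\eqref{p:ccp-dc-pen} for large $\sigma$'', which is Theorem~\ref{thm:prelim-global-exact-penalty}(ii) applied with the global error bound~\eqref{eq:EB-global} established in that proof. I will cite that directly to avoid depending on the $\phi=0$ clause.

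\textbf{Step 3 ((b)$\Rightarrow$(a)).} Take $\sigma>\bar\sigma$. Since $V(\bar{\bs{y}},\bar{\bs{z}})=0$, the point lies in $\Omega$, and $f(\bar{\bs{x}})=F_\sigma(\bar{\bs{w}})\le F_\sigma(\bs{w})$ for all $\bs{w}\in\Omega_0\supseteq\Omega$. On $\Omega$ we have $F_\sigma=f$, so $f(\bar{\bs{x}})\le f(\bs{x})$ for all $(\bs{x},\bs{y},\bs{z})\in\Omega$. Hence the point is a global minimizer of~\eqref{p:ccp-bilinear}. This direction needs no CQ.

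\textbf{Main obstacle.} The delicate step is Step~1. The lifted penalty $\sum_{s\le S-m}[g_{(s)}]_+$ differs from the primal penalty $[\phi]_+$, so a direct equivalence of the penalized problems fails. The argument must therefore go through the one-sided inequality, which happens to be the only direction needed, together with equality on the feasible set.
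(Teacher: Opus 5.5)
Your proposal is correct and follows the paper's proof essentially step for step. The three ingredients are the same: the pointwise bound $V(\bs{y},\bs{z})\ge[\phi(\bs{x})]_+$ on $\Omega_0$ (via $y_s\ge[g_s(\bs{x})]_+$ and the greedy minimization over $\C$), its combination with the primal global exact penalty for (a)$\Rightarrow$(b), and the restriction-to-$\Omega$ argument for (b)$\Rightarrow$(a).

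Citing Theorem~\ref{thm:prelim-global-exact-penalty}(ii) with the error bound~\eqref{eq:EB-global}, rather than Theorem~\ref{thm:global-exact-dc}, is a sensible precaution, since a global minimizer of~\eqref{p:ccp-dc} need only satisfy $\phi(\bar{\bs{x}})\le 0$. You should delete the opening claim in Step~1 that the minimum of $V$ over $(\bs{y},\bs{z})$ equals $[\phi(\bs{x})]_+$, because you correctly retract it a few lines later.
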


\begin{proof}
We divide the proof into three steps.

\noindent\textbf{Step 1: A lower bound linking $V$ and $[\phi]_+$.}
Let $(\bs{x},\bs{y},\bs{z})$ be feasible for Problem~\eqref{p:ccp-bilinear-pen}. Since $y_s\ge 0$ and $g_s(\bs{x})\le y_s$, we have $y_s\ge [g_s(\bs{x})]_+$ for all $s$.
Let $\psi_s(\bs{x}):=[g_s(\bs{x})]_+$ and let $\psi_{(1)}(\bs{x})\le\cdots\le \psi_{(S)}(\bs{x})$ denote the sorted values.
Then
\[
V(\bs{y},\bs{z})=\sum_{s=1}^S y_s z_s
\ \ge\ \sum_{s=1}^S \psi_s(\bs{x})\,z_s.
\]

Let $\C = \{\bs{z}\in\Re^S:\sum_{s=1}^S z_s\ge S-m, 0\le z_s\le 1, \forall s\}$. The minimum of $\sum_{s=1}^S \psi_s(\bs{x})z_s$ over $\bs{z}\in\C$ is attained by assigning weight $1$ to the $S-m$ smallest components.
Hence, for every $\bs{z}\in\C$,
\[
\sum_{s=1}^S \psi_s(\bs{x})\,z_s\ \ge\ \sum_{s=1}^{S-m}\psi_{(s)}(\bs{x})\ \ge\ \psi_{(S-m)}(\bs{x}).
\]
By Remark (i) of Proposition~\ref{prop:G1G2-subdiff}, $\phi(\bs{x})=G_1(\bs{x})-G_2(\bs{x})=g_{(S-m)}(\bs{x})$, so
$[\phi(\bs{x})]_+=[g_{(S-m)}(\bs{x})]_+=\psi_{(S-m)}(\bs{x})$.
Therefore,
\begin{equation}\label{eq:V-lower-phi}
V(\bs{y},\bs{z})\ \ge\ [\phi(\bs{x})]_+,\qquad \forall\,(\bs{x},\bs{y},\bs{z}) \in \Omega_0.
\end{equation}

\noindent\textbf{Step 2: (a)$\Rightarrow$(b).}
Assume $(\bar{\bs{x}},\bar{\bs{y}},\bar{\bs{z}})$ is a global minimizer of \eqref{p:ccp-bilinear}. Then $V(\bar{\bs{y}},\bar{\bs{z}})=0$ and $\bar{\bs{x}}$ is a global minimizer of \eqref{p:ccp-dc} since the two problems are equivalent and coincide in the objective function.
By Theorem~\ref{thm:global-exact-dc}, there exists $\bar\sigma>0$ such that, for every $\sigma>\bar\sigma$,
\begin{equation}\label{eq:P-pen-opt}
f(\bar{\bs{x}})\ \le\ f(\bs{x})+\sigma[\phi(\bs{x})]_+,\qquad \forall\,\bs{x}\in\X.
\end{equation}
Now fix any $\sigma>\bar\sigma$ and any $(\bs{x},\bs{y},\bs{z})$ feasible for~\eqref{p:ccp-bilinear-pen}.
Using \eqref{eq:P-pen-opt} and \eqref{eq:V-lower-phi}, we obtain
\[
F_\sigma(\bs{x},\bs{y},\bs{z})
= f(\bs{x})+\sigma V(\bs{y},\bs{z})
\ \ge\ f(\bs{x})+\sigma[\phi(\bs{x})]_+
\ \ge\ f(\bar{\bs{x}})
= F_\sigma(\bar{\bs{x}},\bar{\bs{y}},\bar{\bs{z}}),
\]
which shows that $(\bar{\bs{x}},\bar{\bs{y}},\bar{\bs{z}})$ is a global minimizer of \eqref{p:ccp-bilinear-pen} for all $\sigma>\bar\sigma$.

\noindent\textbf{Step 3: (b)$\Rightarrow$(a).}
Assume $V(\bar{\bs{y}},\bar{\bs{z}})=0$ and $(\bar{\bs{x}},\bar{\bs{y}},\bar{\bs{z}})$ is a global minimizer of \eqref{p:ccp-bilinear-pen} for some $\sigma>0$.
Then $(\bar{\bs{x}},\bar{\bs{y}},\bar{\bs{z}})\in\Omega$ and is feasible for \eqref{p:ccp-bilinear}.
For any $(\bs{x},\bs{y},\bs{z})$ feasible for~\eqref{p:ccp-bilinear}, we have $V(\bs{y},\bs{z})=0$ and thus $F_\sigma(\bs{x},\bs{y},\bs{z})=f(\bs{x})$.
Global optimality of $(\bar{\bs{x}},\bar{\bs{y}},\bar{\bs{z}})$ for \eqref{p:ccp-bilinear-pen} yields
\[
f(\bar{\bs{x}})=F_\sigma(\bar{\bs{x}},\bar{\bs{y}},\bar{\bs{z}})
\ \le\ F_\sigma(\bs{x},\bs{y},\bs{z})=f(\bs{x}),
\qquad \forall\,(\bs{x},\bs{y},\bs{z})\in\Omega,
\]
so $(\bar{\bs{x}},\bar{\bs{y}},\bar{\bs{z}})$ is a global minimizer of \eqref{p:ccp-bilinear}.
\end{proof}

\subsection{Local Exact Penalty of the Primal Formulation}


In this section, we aim to establish the local exact penalty relationship for the primal formulation~\eqref{p:ccp-dc} and its penalized counterpart~\eqref{p:ccp-dc-pen}. The result is identical to Theorem~\ref{thm:prelim-local-exact-penalty} under Assumption~\ref{ass:mfcq-dc} and thus we omit the proof. 

\begin{theorem} 
Suppose that Assumptions~\ref{ass:X-convex},~\ref{ass:f-g-convex} and~\ref{ass:mfcq-dc} hold. Let $\bar{\bs{x}}\in\X$ be given. Then the following statements are equivalent:
\begin{itemize}
\item[(a)] $\bar{\bs{x}}$ is a local minimizer of Problem~\eqref{p:ccp-dc}, i.e., there exists $r_0>0$ such that
\[
f(\bar{\bs{x}})\ \le\ f(\bs{x}),\qquad \forall\,\bs{x}\in \hat\X\cap B(\bar{\bs{x}},r_0).
\]
\item[(b)] $\phi(\bar{\bs{x}})\le 0$ and there exists $\bar\sigma>0$ such that, for every $\sigma>\bar\sigma$, $\bar{\bs{x}}$ is a local minimizer of the penalized problem~\eqref{p:ccp-dc-pen}, i.e., there exists $r_\sigma>0$ such that
\[
f(\bar{\bs{x}})+\sigma[\phi(\bar{\bs{x}})]_+\ \le\ f(\bs{x})+\sigma[\phi(\bs{x})]_+,
\qquad \forall\,\bs{x}\in \X\cap B(\bar{\bs{x}},r_\sigma).
\]
\end{itemize}
\end{theorem}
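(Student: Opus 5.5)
The plan is to reduce the statement to Theorem~\ref{thm:prelim-local-exact-penalty}, applied with $g:=\phi$ and feasible set $\mathcal{F}=\hat\X$. That theorem needs two ingredients. First, $f$ must be Lipschitz on $\X$ with constant $L_f$; this was already noted after Assumptions~\ref{ass:X-convex}--\ref{ass:f-g-convex}. Second, a local linear error bound $\dist(\bs{x},\hat\X)\le\kappa[\phi(\bs{x})]_+$ must hold on $\X\cap B(\bar{\bs{x}},\varepsilon)$. We obtain it by splitting on the sign of $\phi(\bar{\bs{x}})$, for a feasible $\bar{\bs{x}}$ (i.e.\ $\phi(\bar{\bs{x}})\le 0$).

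\emph{Case $\phi(\bar{\bs{x}})=0$.} Assumption~\ref{ass:mfcq-dc} is exactly the GMFCQ~\eqref{eq:prelim-gmfcq} for $g=\phi$. Here $\phi$ is locally Lipschitz on $\X$ by Remark~(iii) after Proposition~\ref{prop:G1G2-subdiff}. Proposition~\ref{prop:prelim-gmfcq-mr} then gives the bound~\eqref{eq:prelim-error-bound}.

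\emph{Case $\phi(\bar{\bs{x}})<0$.} By continuity of $\phi$, there is $\varepsilon>0$ such that $\X\cap B(\bar{\bs{x}},\varepsilon)\subseteq\hat\X$. On this set both sides of the error bound vanish, so it holds trivially with any $\kappa$.

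Alternatively, the global error bound~\eqref{eq:EB-global} from the proof of Theorem~\ref{thm:global-exact-dc} already holds on all of $\X$ and can be quoted directly.

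\textbf{(a)$\Rightarrow$(b).} A local minimizer of~\eqref{p:ccp-dc} is feasible, so $\phi(\bar{\bs{x}})\le 0$. I will reproduce the short argument behind Theorem~\ref{thm:prelim-local-exact-penalty}(ii) with explicit constants. Take $r:=\min\{r_0,\varepsilon\}/2$ and any $\bs{x}\in\X\cap B(\bar{\bs{x}},r)$. Pick $\bs{x}'\in\Pi_{\hat\X}(\bs{x})$; this projection exists because $\hat\X$ is compact, being a closed subset of compact $\X$. Since $\bar{\bs{x}}\in\hat\X$, we have $\|\bs{x}'-\bs{x}\|\le\|\bs{x}-\bar{\bs{x}}\|<r$, hence $\bs{x}'\in B(\bar{\bs{x}},2r)\subseteq B(\bar{\bs{x}},r_0)$. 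Local optimality and Lipschitz continuity then give
\[
f(\bar{\bs{x}})\le f(\bs{x}')\le f(\bs{x})+L_f\kappa[\phi(\bs{x})]_+,
\]
so (b) holds with $\bar\sigma:=L_f\kappa$ and $r_\sigma:=r$.

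\textbf{(b)$\Rightarrow$(a).} Assume (b) and fix any $\sigma>\bar\sigma$. For $\bs{x}\in\hat\X\cap B(\bar{\bs{x}},r_\sigma)$, both $[\phi(\bar{\bs{x}})]_+$ and $[\phi(\bs{x})]_+$ vanish, so the penalized inequality reduces to $f(\bar{\bs{x}})\le f(\bs{x})$. This is (a) with $r_0:=r_\sigma$.

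The only delicate step is justifying that Proposition~\ref{prop:prelim-gmfcq-mr} applies to the nonsmooth DC function $\phi$ when $\bar{\bs{x}}\in\partial\X$. This requires $\phi$ to be locally Lipschitz on a neighborhood and the direction $\bs{d}$ to lie in $\T_{\X}(\bar{\bs{x}})$, both of which Assumption~\ref{ass:mfcq-dc} and Remark~(iii) provide. Otherwise the argument is routine.
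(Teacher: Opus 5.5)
Your proposal is correct and follows the paper's route. The paper omits the proof, saying the result is identical to Theorem~\ref{thm:prelim-local-exact-penalty} once Assumption~\ref{ass:mfcq-dc} supplies the local error bound through Proposition~\ref{prop:prelim-gmfcq-mr}, which is exactly your reduction. Your separate treatment of the case $\phi(\bar{\bs{x}})<0$ and your radius choice $\min\{r_0,\varepsilon\}/2$, which keeps the projection inside $B(\bar{\bs{x}},r_0)$, fill in details the paper leaves implicit.
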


We point out that Assumption~\ref{ass:mfcq-dc} is not necessary in the structured piecewise-linear setting. Specifically, if $\X$ is a polyhedron and each $h_{s,i}(\bs{x})$ is affine in $\bs{x}$, then one can derive a \emph{global} linear error bound of the form
\[
\dist(\bs{x},\hat\X)\ \le\ \kappa\,[\phi(\bs{x})]_+,\qquad \forall\,\bs{x}\in\X,
\]
by the argument in Proposition~\ref{prop:global-exact-polyhedral}. This global error bound immediately implies the above local exact penalty property. Moreover, since Algorithm~\ref{alg:penalty-dc-primal} can only guarantee convergence to a critical point, in the above piecewise-linear setting, we can provide an equivalent characterization of local minimizers together with a simple verifiable sufficient condition. 

\begin{proposition}{\cite[proposition 6.1.1.]{cui2021modern}}
Suppose Assumptions~\ref{ass:X-convex} and~\ref{ass:f-g-convex} hold. In addition, assume that each $h_{s,i}(\bs{x})$ is affine in $\bs{x}$. Then $\bar{\bs{x}}\in\X$ is a $\mathrm{d}$-stationary point of Problem~\eqref{p:ccp-dc-pen} if and only if it is a local minimizer of Problem~\eqref{p:ccp-dc-pen}.
\end{proposition}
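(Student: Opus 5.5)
The plan is to exploit the fact that, when every $h_{s,i}$ is affine, the penalized objective $\Theta_\sigma(\bs{x}) := f(\bs{x})+\sigma[\phi(\bs{x})]_+$ is the sum of the smooth convex function $f$ and a piecewise affine function. Such a function is directionally differentiable, and locally around any point it agrees with its first-order expansion along directions.

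The trivial direction is that a local minimizer is d-stationary. Indeed, if $\bar{\bs{x}}$ is a local minimizer over the convex set $\X$, then for any $\bs{d}\in\T_{\X}(\bar{\bs{x}})$ (first for feasible directions, then by closure and Lipschitz continuity of $\Theta_\sigma$) we have $\Theta_\sigma'(\bar{\bs{x}};\bs{d})\ge 0$.

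For the converse, the first step is to record the structure of $\phi$. By Remark (i) of Proposition~\ref{prop:G1G2-subdiff}, $\phi(\bs{x})=g_{(S-m)}(\bs{x})$, an order statistic of the piecewise affine functions $g_s=\max_i h_{s,i}$. Hence $\phi$, and therefore $P(\bs{x}):=[\phi(\bs{x})]_+$, is piecewise affine: there are finitely many affine functions $a_1,\dots,a_N$ with $P(\bs{x})\in\{a_j(\bs{x})\}$ for every $\bs{x}$. The standard property of piecewise affine functions then gives a radius $\varepsilon>0$ such that
\[
P(\bar{\bs{x}}+\bs{d})=P(\bar{\bs{x}})+P'(\bar{\bs{x}};\bs{d}),\qquad \forall\,\|\bs{d}\|<\varepsilon .
\]
To justify this, take $\varepsilon$ small enough that only pieces active at $\bar{\bs{x}}$ can be selected in $B(\bar{\bs{x}},\varepsilon)$. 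This uses continuity and the fact that inactive pieces have a positive gap. On that ball, $P(\bar{\bs{x}}+\cdot)-P(\bar{\bs{x}})$ is then a continuous selection of the linear maps $\bs{d}\mapsto \nabla a_j^\top \bs{d}$, so it is positively homogeneous, and it therefore equals $P'(\bar{\bs{x}};\cdot)$.

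With this in hand, take any $\bs{x}\in\X\cap B(\bar{\bs{x}},\varepsilon)$ and set $\bs{d}=\bs{x}-\bar{\bs{x}}\in\T_{\X}(\bar{\bs{x}})$. Convexity of $f$ gives $f(\bs{x})\ge f(\bar{\bs{x}})+\nabla f(\bar{\bs{x}})^\top\bs{d}$. Adding $\sigma$ times the exact expansion of $P$ yields
\[
\Theta_\sigma(\bs{x})\ge \Theta_\sigma(\bar{\bs{x}})+\Theta_\sigma'(\bar{\bs{x}};\bs{d})\ge\Theta_\sigma(\bar{\bs{x}})
\]
by d-stationarity. This proves local minimality.

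The main obstacle is the rigorous justification of the local exactness of the directional expansion of $P$. The difficulty is that $g_{(S-m)}$ is neither convex nor concave, so convexity-based arguments do not apply. I would handle it by writing $\phi=G_1-G_2$, with $G_1$ and $G_2$ both convex polyhedral (maxima of finitely many affine functions, by the Ky-Fan representation). For a polyhedral convex $G=\max_j b_j$ one has $G(\bar{\bs{x}}+\bs{d})=G(\bar{\bs{x}})+\max_{j\in\text{act}}\nabla b_j^\top\bs{d}=G(\bar{\bs{x}})+G'(\bar{\bs{x}};\bs{d})$ for small $\bs{d}$. Subtracting the two expansions gives the exact expansion for $\phi$. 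The positive part is then handled in cases: if $\phi(\bar{\bs{x}})\neq 0$, continuity settles it; if $\phi(\bar{\bs{x}})=0$, then $[\phi(\bar{\bs{x}}+\bs{d})]_+=[\phi'(\bar{\bs{x}};\bs{d})]_+$, which is again the directional derivative.
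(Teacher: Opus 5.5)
Your proof is correct. The paper gives no argument of its own (it simply cites Cui--Pang), and your route is the standard argument behind that result: the exact local expansion $P(\bar{\bs{x}}+\bs{d})=P(\bar{\bs{x}})+P'(\bar{\bs{x}};\bs{d})$ of the piecewise affine term $P=[\phi]_+$ for small $\bs{d}$, cleanly obtained from the polyhedral convex functions $G_1,G_2$, combined with the gradient inequality for the convex $f$.

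The only step stated a little quickly is that a continuous selection of finitely many linear maps on a ball is positively homogeneous. It holds because $t\mapsto \bigl(P(\bar{\bs{x}}+t\bs{d})-P(\bar{\bs{x}})\bigr)/t$ is continuous on $(0,1]$ and takes values in a finite set, hence is constant. Your $G_1-G_2$ argument already bypasses this step in any case.
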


A sufficient condition for a critical point of a DC program to be $\mathrm{d}$-stationary is that the concave part is differentiable at this point~\cite[proposition 6.1.10]{cui2021modern}. In the nonsmooth setting, this is equivalent to the fact that its Clarke subdifferential is a singleton. In our structured piecewise-linear setting, this differentiability requirement can be checked directly using the definition of $G_2$ in \eqref{eq:G-def}. $G_2$ is differentiable at $\bar{\bs{x}}\in\X$ provided that the following two uniqueness conditions hold:
\begin{itemize}
\item[(i)] (\emph{Unique top-$m$ scenarios}) The $m$ largest components of $(g_1(\bar{\bs{x}}),\ldots,g_S(\bar{\bs{x}}))$ are uniquely identified, i.e.,
\[
g_{(S-m)}(\bar{\bs{x}})\ <\ g_{(S-m+1)}(\bar{\bs{x}}),
\]
so that the index set of the top-$m$ scenarios
\[
\mathcal S_m(\bar{\bs{x}})\ :=\ \Bigl\{s\in[S]:\ g_s(\bar{\bs{x}})\ \ge\ g_{(S-m+1)}(\bar{\bs{x}})\Bigr\}
\]
satisfies $|\mathcal S_m(\bar{\bs{x}})|=m$;

\item[(ii)] (\emph{Unique active piece within each selected scenario}) For every $s\in\mathcal S_m(\bar{\bs{x}})$, the maximizer in $g_s(\bar{\bs{x}})=\max_{i\in[I]}h_{s,i}(\bar{\bs{x}})$ is unique, i.e.,
\[
\Bigl|\arg\max_{i\in[I]} h_{s,i}(\bar{\bs{x}})\Bigr|\ =\ 1.
\]
\end{itemize}

\subsection{Equivalence between (Strongly) Stationary Points of the Lifted Formulation} 

In this section, we investigate the local connection between the bilinear model Problem~\eqref{p:ccp-bilinear} and its smooth penalized counterpart Problem~\eqref{p:ccp-bilinear-pen}. 
A well-known subtlety in MPCC is that even when the penalty parameter is sufficiently large, the set of local minimizers of an MPCC generally does not admit a one-to-one correspondence with that of its penalty counterpart~\cite{scheel2000mathematical, fang2012pivoting, jara2018study}. 

Nevertheless, any accumulation point of the sequence generated by Algorithm~\ref{alg:penalty-dc-lifted} is a stationary point of Problem~\eqref{p:ccp-bilinear-pen}. Therefore, instead of seeking a direct equivalence between local minimizers, we adopt a more practical perspective by relating stationary points of the smooth penalized problem~\eqref{p:ccp-bilinear-pen} to a suitable notion of generalized stationarity for Problem~\eqref{p:ccp-bilinear}.

We now introduce the definition of strong stationarity for Problem~\eqref{p:ccp-bilinear}. Let $\bar{\bs{w}} =(\bar{\bs{x}},\bar{\bs{y}},\bar{\bs{z}})\in\Omega$ be given. Define the index sets
\begin{align*}
\I_y(\bar{\bs{w}})&:=\{s\in[S]: \bar y_s=0, \bar z_s>0\},\\
\I_z(\bar{\bs{w}})&:=\{s\in[S]: \bar z_s=0, \bar y_s>0\},\\
\I_0(\bar{\bs{w}})&:=\{s\in[S]: \bar y_s=\bar z_s=0\}.
\end{align*}

Using these index sets, we define the \emph{relaxed program} at $\bar{\bs{w}}$ by fixing the active-side variables to maintain the same complementarity pattern. 
\begin{equation}
\label{p:bilinear-relaxed}
\min_{\bs{x},\bs{y},\bs{z} \in \Omega_0}\left\{ f(\bs{x}) : y_s=0,\forall s\in \I_y(\bar{\bs{w}}), \; z_s=0,\forall s\in \I_z(\bar{\bs{w}})\right\}.
\end{equation}

\begin{definition}[Strong stationarity~\cite{jara2018study}] 
Under Assumptions~\ref{ass:X-convex} and ~\ref{ass:f-g-convex}, a point $\bar{\bs{w}}\in\Omega$ is called a \emph{strongly stationary point} of Problem~\eqref{p:ccp-bilinear} if it is an optimal solution to the convex relaxed program~\eqref{p:bilinear-relaxed}.
\end{definition}

Denote the feasible region of Problem~\eqref{p:bilinear-relaxed} by
\begin{equation*}
\Omega_{\mathrm{re}}(\bar{\bs{w}})\ :=\ \Bigl\{(\bs{x},\bs{y},\bs{z})\in\Omega_0:\ 
y_s=0\ \forall s\in \I_y(\bar{\bs{w}}),\ z_s=0\ \forall s\in \I_z(\bar{\bs{w}})\Bigr\}.
\end{equation*}

Since~\eqref{p:bilinear-relaxed} is a convex program, $\bar{\bs{w}}$ solves it if and only if
\begin{equation}\label{eq:relaxed-stationarity-nc}
\bs{0}\ \in\ \nabla f(\bar{\bs{x}})\times\{\bs{0}\}\times\{\bs{0}\}\ +\ \N_{\Omega_{\mathrm{re}}(\bar{\bs{w}})}(\bar{\bs{w}}).
\end{equation} 

The next theorem shows that every strongly stationary point is a local minimizer of Problem~\eqref{p:ccp-bilinear}, which is known in the MPCC literature.

\begin{theorem}{\cite[proposition~1]{jara2018study}}
Let $\bar{\bs{w}}=(\bar{\bs{x}},\bar{\bs{y}},\bar{\bs{z}})\in\Omega$ be a strongly stationary point
of Problem~\eqref{p:ccp-bilinear}. Then $\bar{\bs{w}}$ is a local minimizer of Problem~\eqref{p:ccp-bilinear}.
\end{theorem}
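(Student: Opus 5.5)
The key observation is that near $\bar{\bs{w}}$, every point of $\Omega$ already lies in the relaxed feasible set $\Omega_{\mathrm{re}}(\bar{\bs{w}})$. Local optimality then follows directly from the global optimality of $\bar{\bs{w}}$ for the convex relaxed program~\eqref{p:bilinear-relaxed}. No constraint qualification is needed, because strong stationarity is defined here as optimality for the relaxed program rather than through multipliers.

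\textbf{Step 1: a neighborhood where the strict signs persist.} Set
\[
r:=\min\Bigl\{\min_{s\in\I_y(\bar{\bs{w}})}\bar z_s,\ \min_{s\in\I_z(\bar{\bs{w}})}\bar y_s\Bigr\}>0,
\]
with the convention that the minimum over an empty set is $+\infty$; if both index sets are empty, take any $r>0$. Take $\bs{w}=(\bs{x},\bs{y},\bs{z})\in\Omega\cap B(\bar{\bs{w}},r)$. For each $s\in\I_y(\bar{\bs{w}})$ we have $|z_s-\bar z_s|<r\le\bar z_s$, so $z_s>0$. Complementarity $y_sz_s=0$ in $\Omega$ then forces $y_s=0$. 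Symmetrically, for each $s\in\I_z(\bar{\bs{w}})$ we get $y_s>0$, hence $z_s=0$.

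\textbf{Step 2: local points of $\Omega$ are relaxed-feasible.} Since $\Omega\subseteq\Omega_0$, Step 1 shows that every $\bs{w}\in\Omega\cap B(\bar{\bs{w}},r)$ satisfies all constraints defining $\Omega_{\mathrm{re}}(\bar{\bs{w}})$. Hence
\[
\Omega\cap B(\bar{\bs{w}},r)\subseteq\Omega_{\mathrm{re}}(\bar{\bs{w}}).
\]
The indices in $\I_0(\bar{\bs{w}})$ impose no extra restriction in the relaxed program, so they need no treatment.

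\textbf{Step 3: conclude.} By the definition of strong stationarity, $\bar{\bs{w}}$ minimizes $f$ over $\Omega_{\mathrm{re}}(\bar{\bs{w}})$. Combining this with the inclusion in Step 2 gives
\[
f(\bar{\bs{x}})\le f(\bs{x})\qquad\text{for all }\bs{w}\in\Omega\cap B(\bar{\bs{w}},r).
\]
The objective of Problem~\eqref{p:ccp-bilinear} is exactly $f(\bs{x})$, so $\bar{\bs{w}}$ is a local minimizer of Problem~\eqref{p:ccp-bilinear}.

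The only step that needs care is the choice of radius in Step 1. The degenerate indices in $\I_0(\bar{\bs{w}})$ might seem to require a case analysis, since nearby points may move $y_s$ or $z_s$ off zero on either side. This is harmless because the relaxed program leaves those coordinates free, subject only to the constraints of $\Omega_0$. So the argument is essentially immediate once the inclusion in Step 2 is observed.
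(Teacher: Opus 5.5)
Your proof is correct and complete. The radius $r$ ensures that complementarity forces $y_s=0$ on $\I_y(\bar{\bs w})$ and $z_s=0$ on $\I_z(\bar{\bs w})$, which gives $\Omega\cap B(\bar{\bs w},r)\subseteq\Omega_{\mathrm{re}}(\bar{\bs w})$, and global optimality of $\bar{\bs w}$ on the relaxed program then finishes it. The paper gives no proof of its own here and simply cites \cite[proposition~1]{jara2018study}. Your local-containment argument is the standard proof of that fact, and it needs neither convexity nor any constraint qualification, as you note.
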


We next establish the connection between stationary points of the penalized problem \eqref{p:ccp-bilinear-pen} and strongly stationary points of~\eqref{p:ccp-bilinear}. For this purpose, we adopt a standard constraint qualification tailored to MPCC models.

\begin{definition}[MPCC-MFCQ~\cite{hoheisel2013theoretical, jara2018study}]
Let $\bar{\bs{w}}\in\Omega$ be given. We say that \emph{MPCC-MFCQ holds at $\bar{\bs{w}}$} if the classical MFCQ holds at $\bar{\bs{w}}$ for the convex program \eqref{p:bilinear-relaxed}.
\end{definition}

Finally with this constraint qualification at hand, we are able to establish the equivalence between the stationary points of the two problems.

\begin{theorem}{\cite[proposition 6]{jara2018study}}
Let $(\bar{\bs{x}}, \bar{\bs{y}}, \bar{\bs{z}})\in\Omega$ be given. Suppose that Assumptions~\ref{ass:X-convex} and~\ref{ass:f-g-convex} hold.
\begin{itemize}
\item[(a)] If $(\bar{\bs{x}}, \bar{\bs{y}}, \bar{\bs{z}})\in\Omega$ is a stationary point for Problem~\eqref{p:ccp-bilinear-pen} for some $\sigma > 0$ and is feasible for Problem~\eqref{p:ccp-bilinear}, then it is a strongly stationary point of Problem~\eqref{p:ccp-bilinear}.
\item[(b)] If $(\bar{\bs{x}}, \bar{\bs{y}}, \bar{\bs{z}})\in\Omega$ is a strongly stationary point for Problem~\eqref{p:ccp-bilinear} and the MPCC-MFCQ holds, then it is a stationary point of Problem~\eqref{p:ccp-bilinear-pen} for all $\sigma$ sufficiently large.
\end{itemize}
\end{theorem}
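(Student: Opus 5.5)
Both parts come down to comparing the normal cones $\N_{\Omega_0}(\bar{\bs{w}})$ and $\N_{\Omega_{\mathrm{re}}(\bar{\bs{w}})}(\bar{\bs{w}})$, using that $\nabla F_\sigma(\bar{\bs{w}})=(\nabla f(\bar{\bs{x}}),\,\sigma\bar{\bs{z}},\,\sigma\bar{\bs{y}})$. Both $\Omega_0$ and $\Omega_{\mathrm{re}}(\bar{\bs{w}})$ are convex under Assumptions~\ref{ass:X-convex}--\ref{ass:f-g-convex}, so stationarity of~\eqref{p:ccp-bilinear-pen} is the first-order condition $\langle\nabla F_\sigma(\bar{\bs{w}}),\bs{w}-\bar{\bs{w}}\rangle\ge 0$ for all $\bs{w}\in\Omega_0$. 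Strong stationarity is the analogous condition~\eqref{eq:relaxed-stationarity-nc} with gradient $(\nabla f(\bar{\bs{x}}),\bs{0},\bs{0})$ over $\Omega_{\mathrm{re}}(\bar{\bs{w}})$.

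\textbf{Part (a).} Take any $\bs{w}=(\bs{x},\bs{y},\bs{z})\in\Omega_{\mathrm{re}}(\bar{\bs{w}})\subseteq\Omega_0$. The penalized stationarity gives
\[
\langle\nabla f(\bar{\bs{x}}),\bs{x}-\bar{\bs{x}}\rangle+\sigma\sum_s\bigl[\bar z_s(y_s-\bar y_s)+\bar y_s(z_s-\bar z_s)\bigr]\ge 0 .
\]
We show the bracketed sum vanishes term by term:
\begin{itemize}
\item If $s\in\I_y$, then $\bar y_s=0$ and $y_s=0$.
\item If $s\in\I_z$, then $\bar z_s=0$ and $z_s=0$.
\item If $s\in\I_0$, then $\bar y_s=\bar z_s=0$.
\end{itemize}
Feasibility $\bar{\bs{w}}\in\Omega$ guarantees these three cases exhaust $[S]$. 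Hence $\langle\nabla f(\bar{\bs{x}}),\bs{x}-\bar{\bs{x}}\rangle\ge0$ on $\Omega_{\mathrm{re}}(\bar{\bs{w}})$, so $\bar{\bs{w}}$ solves the convex program~\eqref{p:bilinear-relaxed}. This part is immediate.

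\textbf{Part (b).} Under MPCC-MFCQ, the convex program~\eqref{p:bilinear-relaxed} admits KKT multipliers at $\bar{\bs{w}}$, with bounded multiplier set:
\begin{itemize}
\item $\lambda_{s,i}\ge0$ for the active constraints $h_{s,i}(\bs{x})\le y_s$;
\item $\mu_s\ge0$ for active $y_s\ge0$, and free multipliers $\gamma_s$ for $y_s=0$, $s\in\I_y$;
\item bounds on $\bs{z}$ and the constraint $\bs{1}^\top\bs{z}\ge S-m$ (multiplier $\nu\ge0$), plus free multipliers $\theta_s$ for $z_s=0$, $s\in\I_z$;
\item a normal vector from $\N_\X(\bar{\bs{x}})$.
\end{itemize}
The KKT system for~\eqref{p:ccp-bilinear-pen} at $\bar{\bs{w}}$ has the same structure except for two changes: the free multipliers $\gamma_s,\theta_s$ are replaced by sign-constrained ones ($y_s\ge0$, $z_s\ge0$), and the $\bs{y}$- and $\bs{z}$-equations gain the extra terms $\sigma\bar z_s$ and $\sigma\bar y_s$. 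For $s\in\I_y$, set the new multiplier of $y_s\ge0$ to $\gamma_s+\sigma\bar z_s$, which is nonnegative once $\sigma\ge|\gamma_s|/\bar z_s$ because $\bar z_s>0$. Symmetrically, for $s\in\I_z$, use $\theta_s+\sigma\bar y_s\ge0$ for $\sigma\ge|\theta_s|/\bar y_s$. For $s\in\I_0$, both extra terms vanish, and the relaxed program keeps the same sign constraints $y_s\ge0$, $z_s\ge 0$ (nothing is fixed), so the multipliers transfer unchanged. The $\bs{x}$-equation is identical in the two systems. Taking $\bar\sigma=\max\{|\gamma_s|/\bar z_s,\;|\theta_s|/\bar y_s\}$ over the relevant indices then gives stationarity of~\eqref{p:ccp-bilinear-pen} for every $\sigma\ge\bar\sigma$.

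The main obstacle is the bookkeeping in part (b): the sign of the $z_s\le1$ constraints and of the aggregate multiplier $\nu$ must be matched consistently in both systems. Care is also needed with the case $s\in\I_y$ where $\bar z_s=1$ is active at the upper bound; there the upper-bound multiplier is carried over unchanged and only the $y_s$-equation is modified. MPCC-MFCQ is used only to guarantee that the finite multipliers $\gamma_s,\theta_s$ exist, which is what makes $\bar\sigma$ finite. If the normal cone of $\Omega_0$ is not the sum of the individual constraint cones, I would fall back on Slater/MFCQ for $\Omega_0$ at $\bar{\bs{w}}$, which follows from MPCC-MFCQ since $\Omega_0$ relaxes $\Omega_{\mathrm{re}}$ only by inequalities.
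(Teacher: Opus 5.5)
Your proof is correct. The paper gives no argument of its own and simply defers to \cite[proposition 6]{jara2018study}, and your route is the standard argument for this kind of result: the variational-inequality form of stationarity over the convex set $\Omega_0$ for (a), and for (b) a term-by-term comparison of KKT multipliers, with $\bar\sigma$ set by the ratios of the free multipliers to $\bar z_s$ ($s\in\I_y$) and to $\bar y_s$ ($s\in\I_z$). Your closing fallback is unnecessary: (b) only uses the always-valid inclusion of the sum of the individual constraint normal cones in $\N_{\Omega_0}(\bar{\bs{w}})$, so no constraint qualification for $\Omega_0$ is needed.
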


\section{Identifying Spurious Local Minimizers} \label{sec:iden-localmin}

In the previous section, we have established the local exact penalty for the primal formulation and the fact that strong stationarity for the lifted formulation~\eqref{p:ccp-bilinear} implies local optimality in the lifted space. A natural question is whether such a local minimizer of the lifted formulation~\eqref{p:ccp-bilinear} also yields a meaningful local optimality guarantee for the primal formulation~\eqref{p:ccp-dc} in the $\bs{x}$-space. More precisely, given a local minimizer $(\bar{\bs{x}},\bar{\bs{y}},\bar{\bs{z}})$ of \eqref{p:ccp-bilinear}, whether the projection $\bar{\bs{x}}$ is a local minimizer of the DC formulation \eqref{p:ccp-dc} remains unclear.

Our aim in this section is to answer this question. We show that such an implication from the $\bs{x}$-space to the lifted space always holds: every local minimizer
$\bar{\bs{x}}$ of \eqref{p:ccp-dc} can be lifted to a local minimizer of \eqref{p:ccp-bilinear}. However, the reverse does not hold in general: \eqref{p:ccp-bilinear} may admit spurious local minimizers whose $\bs{x}$-components are not locally optimal for \eqref{p:ccp-dc}. We then identify a mild regularity condition under which the reverse statement holds.

Define $\psi_s(\bs{x}):=[g_s(\bs{x})]_+$ for all $s\in[S]$ and the set-valued mapping
\[
Z(\bs{x})
=
\arg\min\left \{ \sum^S_{s=1}\psi_s(\bs{x}) z_s: \bs{z} \in \C \right\}.
\]
Since $\C$ is nonempty, compact, and convex and $(\psi_1(\bs{x}),\ldots,\psi_S(\bs{x}))$ is continuous in $\bs{x}$,
Berge's maximum theorem~\cite{berge1963topological} implies that $Z(\bs{x})$ is upper semicontinuous. We first show that every local minimum of the
DC formulation~\eqref{p:ccp-dc} yields a local minimum of the lifted formulation~\eqref{p:ccp-bilinear}.

\begin{theorem}\label{thm:equiv-local-P2Q}
Let $\bar{\bs{x}}\in\hat{\X}$ be a local minimizer of Problem~\eqref{p:ccp-dc}. Then there exists
$\bar{\bs{y}}\in\Re^S_+$ and $\bar{\bs{z}}\in\C$ such that
$(\bar{\bs{x}},\bar{\bs{y}},\bar{\bs{z}})\in\Omega$ is a local minimizer of Problem~\eqref{p:ccp-bilinear}.
\end{theorem}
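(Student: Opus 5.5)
The plan is to lift $\bar{\bs{x}}$ with the minimal slack and a selection $\bar{\bs{z}}$ that puts weight only on scenarios satisfied at $\bar{\bs{x}}$. Then any lifted feasible point near $(\bar{\bs{x}},\bar{\bs{y}},\bar{\bs{z}})$ will have an $\bs{x}$-component that is feasible for \eqref{p:ccp-dc} and close to $\bar{\bs{x}}$, so local optimality of $\bar{\bs{x}}$ transfers.

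\textbf{Construction.} Set $\bar y_s:=\psi_s(\bar{\bs{x}})=[g_s(\bar{\bs{x}})]_+$, so $\bar{\bs{y}}\ge\bs{0}$ and $g_s(\bar{\bs{x}})\le\bar y_s$. Since $\bar{\bs{x}}\in\hat\X$, Proposition~\ref{prop:equiv-cc} shows that the set $T:=\{s: g_s(\bar{\bs{x}})\le 0\}$ has $|T|\ge S-m$. Take $\bar z_s=1$ for $s\in T$ and $\bar z_s=0$ otherwise. Then $\bar{\bs{z}}\in\C$ and $\bar y_s\bar z_s=0$ for all $s$, so $(\bar{\bs{x}},\bar{\bs{y}},\bar{\bs{z}})\in\Omega$.

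\textbf{Localization.} Let $r_0$ be the radius in the local optimality of $\bar{\bs{x}}$ for \eqref{p:ccp-dc}. Choose $r\le r_0$ with $r<1$, and take any $(\bs{x},\bs{y},\bs{z})\in\Omega$ with $\|(\bs{x},\bs{y},\bs{z})-(\bar{\bs{x}},\bar{\bs{y}},\bar{\bs{z}})\|<r$. For $s\in T$ we have $|z_s-1|<1$, so $z_s>0$, and complementarity forces $y_s=0$. Feasibility then gives $g_s(\bs{x})\le y_s=0$ for every $s\in T$. Hence $\bs{x}$ satisfies at least $|T|\ge S-m$ scenarios, so $\bs{x}\in\hat\X$ by Proposition~\ref{prop:equiv-cc}. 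Moreover $\|\bs{x}-\bar{\bs{x}}\|<r\le r_0$, so $f(\bs{x})\ge f(\bar{\bs{x}})$. Since the objective of \eqref{p:ccp-bilinear} is $f(\bs{x})$, the lifted point is a local minimizer.

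\textbf{Main obstacle.} The only delicate point is the choice of $\bar{\bs{z}}$. If $\bar{\bs{z}}$ had fractional or zero entries on satisfied scenarios, nearby $\bs{z}$ could shift weight to scenarios that are violated near $\bar{\bs{x}}$, and $\bs{x}$ might then fail to lie in $\hat\X$. Taking $\bar z_s=1$ on all of $T$ avoids this: nearby points keep every $z_s$ with $s\in T$ strictly positive, which fixes $y_s=0$ there. The construction needs no constraint qualification and does not use convexity.
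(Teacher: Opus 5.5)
Your proof is correct and is essentially the paper's argument. The paper also lifts $\bar{\bs{x}}$ with $\bar{\bs{z}}$ the indicator of satisfied scenarios (it takes an index set $\bar J$ of size exactly $S-m$ rather than all of $T$) and sets $\bar y_s=[g_s(\bar{\bs{x}})]_+$ off that set, then transfers local optimality through the common objective $f$. Your explicit check that nearby lifted points satisfy $\bs{x}\in\hat\X$ spells out a step the paper leaves implicit.

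One correction to your closing remark: the choice of $\bar{\bs{z}}$ is not delicate. For every $(\bs{x},\bs{y},\bs{z})\in\Omega$, the support of $\bs{z}$ contains at least $S-m$ indices (as $z_s\le 1$ and $\bs{1}^\top\bs{z}\ge S-m$), and on each of them complementarity gives $y_s=0$ and hence $g_s(\bs{x})\le 0$. So the $\bs{x}$-projection of $\Omega$ lies in $\hat\X$, any lift of $\bar{\bs{x}}$ into $\Omega$ is a local minimizer, and the restriction $r<1$ is not needed.
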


\begin{proof}
Since $\bar{\bs{x}}\in\hat{\X}$, Proposition~\ref{prop:equiv-cc} implies that there exists an index set $\bar J\subseteq[S]$ with $|\bar J|=S-m$ such that $g_s(\bar{\bs{x}})\le 0$ for all $s\in\bar J$.
Define $\bar{\bs{z}}\in\C$ by $\bar z_s=1$ if $s\in\bar J$ and $\bar z_s:=0$ otherwise.
Define $\bar{\bs{y}}\in\Re^S_+$ by $\bar y_s=0$ if $s\in\bar J$ and $\bar y_s:=[g_s(\bar{\bs{x}})]_+$ otherwise.
Then $\bar{\bs{y}}\ge 0$, $g_s(\bar{\bs{x}})\le \bar y_s$ for all $s$, and $\sum_{s=1}^S \bar y_s\bar z_s=0$, hence
$(\bar{\bs{x}},\bar{\bs{y}},\bar{\bs{z}})\in\Omega$.

Let $\varepsilon>0$ be such that $f(\bs{x})\ge f(\bar{\bs{x}})$ for all $\bs{x}\in\hat{\X}\cap B(\bar{\bs{x}},\varepsilon)$.
Take any $(\bs{x},\bs{y},\bs{z})\in\Omega$ with
$\|(\bs{x},\bs{y},\bs{z})-(\bar{\bs{x}},\bar{\bs{y}},\bar{\bs{z}})\|<\varepsilon$. Therefore, we have $f(\bs{x}) \geq f(\bar{\bs{x}})$ for every $\|(\bs{x},\bs{y},\bs{z})-(\bar{\bs{x}},\bar{\bs{y}},\bar{\bs{z}})\|<\varepsilon$ and $(\bs{x},\bs{y},\bs{z})\in\Omega$ since Problem~\eqref{p:ccp-dc} and~\eqref{p:ccp-bilinear} have the same objective function.
\end{proof}

However, the inverse of Theorem~\ref{thm:equiv-local-P2Q} does not hold without additional assumptions. This can be demonstrated using the following one-dimensional example.

\begin{example}\label{exa:failure-Q2P}
Consider the one-dimensional setting with $S=2$, $m=1$, $\X=[-1,+\infty)$. Let $f(x)=x$, the sample based constraints $g_1(x)=[x]_+$ and $g_2(x)=[-x]_+$. Since $g_s(x)\ge 0$, we have
\[
G_1(x)- G_2(x) = \min\{[x]_+,[-x]_+\}=0,\qquad \forall x\in\X.
\]
Therefore, the DC formulation~\eqref{p:ccp-dc} reduces to $\min\{x:x\ge -1\}$, whose unique global minimizer is $x=-1$.

We now show that $x=0$ can nevertheless appear as the $\bs{x}$-component of a local minimizer of the lifted formulation, which can be written as follows
\[
\min_{x, \bs{y}\geq \bs{0}, \bs{z}\in\C} \left \{x: g_1(x) \leq y_1, g_2(x) \leq y_2, \; y_1z_1 + y_2 z_2 = 0 \right\}.
\]
Take $\bar{x}=0$, $\bar{\bs{y}}=(0,0)$, and $\bar{\bs{z}}=(1,1)$. Then $(\bar{x},\bar{\bs{y}},\bar{\bs{z}})\in\Omega$ since $g_1(0)=g_2(0)=0$, $\sum_{s=1}^2\bar z_s=2\ge S-m=1$, and $\sum_{s=1}^2 \bar y_s\bar z_s=0$. Moreover, there exists $r\in(0,1/4)$ such that for any feasible $(x,\bs{y},\bs{z})\in\Omega$ with $\|(x,\bs{y},\bs{z})-(\bar{x},\bar{\bs{y}},\bar{\bs{z}})\|<r$, we have $z_1>1/2$ and $z_2>1/2$, which together with $\bs{y}\ge 0$ and $\sum_{s=1}^2 y_s z_s=0$ implies $y_1=y_2=0$. Hence feasibility forces $g_1(x)\le 0$ and $g_2(x)\le 0$, i.e., $[x]_+=[-x]_+=0$, so necessarily $x=0$. Consequently, every feasible point in a sufficiently small neighborhood of $(\bar{x},\bar{\bs{y}},\bar{\bs{z}})$ has the same objective value $f(x)=0$, showing that $(\bar{x},\bar{\bs{y}},\bar{\bs{z}})$ is a local minimizer of~\eqref{p:ccp-bilinear} while $\bar{\bs{x}}=0$ is not a
local minimizer of~\eqref{p:ccp-dc}. 
\end{example}

In the above example, $\C=\{(z_1,z_2)\in[0,1]^2:z_1+z_2\ge S-m=1\}$ and one can verify that
\[
Z(x)=
\begin{cases}
\{(1,0)\}, & x<0,\\
\C, & x=0,\\
\{(0,1)\}, & x>0.
\end{cases}
\]
Example~\ref{exa:failure-Q2P} arises because, at $\bar{x}=0$, the set-valued mapping $Z(\bs{x})$ contains isolated elements. In particular, for any
sequence $x^n\downarrow 0$ we have $Z(x^n)=\{(0,1)\}$. Similarly, for any sequence $x^n\uparrow 0$ we have $Z(x^n)=\{(1,0)\}$ and thus no choice $\bs{z}^n\in Z(x^n)$ can converge to $(1,1)$. This is precisely the pathology that allows $(x,\bs{y},\bs{z}) =(0,(0,0),(1,1))$ to be a local minimizer of~\eqref{p:ccp-bilinear} even though $0$ is not a local minimizer of the DC formulation~\eqref{p:ccp-dc}. We next introduce an assumption that can eliminate such spurious isolated local minimizers.


\begin{theorem}
Let $(\bar{\bs{x}},\bar{\bs{y}},\bar{\bs{z}})\in\Omega$ be a local minimizer of Problem~\eqref{p:ccp-bilinear}. Suppose that $Z(\bs{x})$ is lower semicontinuous at $\bar{\bs{x}}$. Then $\bar{\bs{x}}$ is a local minimizer of Problem~\eqref{p:ccp-dc}.
\end{theorem}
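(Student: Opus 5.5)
The plan is to argue by contradiction, producing a sequence of lifted feasible points that converges to $(\bar{\bs{x}},\bar{\bs{y}},\bar{\bs{z}})$ and has strictly smaller objective value. Suppose $\bar{\bs{x}}$ is not a local minimizer of \eqref{p:ccp-dc}. Then there is a sequence $\bs{x}^n\in\hat\X$ with $\bs{x}^n\to\bar{\bs{x}}$ and $f(\bs{x}^n)<f(\bar{\bs{x}})$. Because Problems \eqref{p:ccp-dc} and \eqref{p:ccp-bilinear} share the objective $f(\bs{x})$, it suffices to build $(\bs{y}^n,\bs{z}^n)$ such that $(\bs{x}^n,\bs{y}^n,\bs{z}^n)\in\Omega$ and $(\bs{y}^n,\bs{z}^n)\to(\bar{\bs{y}},\bar{\bs{z}})$. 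This contradicts the local optimality of $(\bar{\bs{x}},\bar{\bs{y}},\bar{\bs{z}})$ in \eqref{p:ccp-bilinear}.

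\textbf{Step 1: $\bar{\bs{z}}\in Z(\bar{\bs{x}})$.} Feasibility gives $\bar y_s\ge\psi_s(\bar{\bs{x}})\ge 0$ and $\bar y_s\bar z_s=0$. Hence
\[
0\le\sum_s\psi_s(\bar{\bs{x}})\bar z_s\le\sum_s\bar y_s\bar z_s=0 .
\]
Every objective value of the problem defining $Z(\bar{\bs{x}})$ is nonnegative, so $\bar{\bs{z}}$ attains the minimum.

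\textbf{Step 2: approximating $\bar{\bs{z}}$ along the sequence.} Lower semicontinuity of $Z$ at $\bar{\bs{x}}$ gives $\bs{z}^n\in Z(\bs{x}^n)$ with $\bs{z}^n\to\bar{\bs{z}}$. Since $\bs{x}^n\in\hat\X$, Proposition~\ref{prop:equiv-cc} yields $S-m$ scenarios with $\psi_s(\bs{x}^n)=0$, so the optimal value of the problem defining $Z(\bs{x}^n)$ is $0$. Therefore $\psi_s(\bs{x}^n)z^n_s=0$ for every $s$.

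\textbf{Step 3: choosing $\bs{y}^n$.} Define $y^n_s:=\psi_s(\bs{x}^n)$ if $z^n_s>0$, and $y^n_s:=\max\{\bar y_s,\psi_s(\bs{x}^n)\}$ if $z^n_s=0$. Then $\bs{y}^n\ge\bs{0}$, $g_s(\bs{x}^n)\le y^n_s$, and $y^n_sz^n_s=0$ for all $s$, so $(\bs{x}^n,\bs{y}^n,\bs{z}^n)\in\Omega$. For $s$ with $\bar y_s=0$, feasibility gives $\psi_s(\bar{\bs{x}})=0$, so both branches tend to $0=\bar y_s$ by continuity of $\psi_s$.

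\textbf{Main obstacle.} The difficulty is the indices $K:=\{s:\bar y_s>0\}$, for which $\bar z_s=0$. If $z^n_s>0$ infinitely often, the first branch gives $y^n_s=\psi_s(\bs{x}^n)\to\psi_s(\bar{\bs{x}})$, which may be strictly smaller than $\bar y_s$.

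The first fix I would try is to modify $\bs{z}^n$ by setting $z^n_s:=0$ for $s\in K$. Complementarity is preserved, and the perturbation tends to $0$ because $z^n_s\to\bar z_s=0$. The only risk is the constraint $\bs{1}^\top\bs{z}\ge S-m$.

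\emph{Case $\bs{1}^\top\bar{\bs{z}}>S-m$.} The deficit $\sum_{s\in K}z^n_s\to 0$, so the constraint holds for all large $n$.

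\emph{Case $\bs{1}^\top\bar{\bs{z}}=S-m$.} I would restore feasibility by shifting the small deficit onto indices $s\notin K$ with $z^n_s<1$ and $\psi_s(\bs{x}^n)=0$; some such $s$ must exist, because $\bs{x}^n$ has at least $S-m$ scenarios with $\psi_s(\bs{x}^n)=0$. Shifting mass onto these indices keeps complementarity and perturbs $\bs{z}^n$ only by $O(\sum_{s\in K}z^n_s)\to0$.

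If that bookkeeping fails, the fallback is to exploit that any point of $Z(\bs{x}^n)$ may be used. I would then choose $\bs{z}^n\in Z(\bs{x}^n)$ as the element nearest to $\bar{\bs{z}}$ within the face $\{z_s=0,\ s\in K\}$, and argue via lower semicontinuity that this element still converges to $\bar{\bs{z}}$.

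With $(\bs{x}^n,\bs{y}^n,\bs{z}^n)\to(\bar{\bs{x}},\bar{\bs{y}},\bar{\bs{z}})$ in $\Omega$ and $f(\bs{x}^n)<f(\bar{\bs{x}})$, the contradiction is complete.
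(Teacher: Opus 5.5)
Your route differs from the paper's and can be made to work. However, the decisive step is not justified by the reason you give.

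In the tight case $\bs{1}^\top\bar{\bs{z}}=S-m$, you restore $\bs{1}^\top\bs{z}\ge S-m$ after zeroing $z^n_s$ on $K$ by noting that $\bs{x}^n$ has at least $S-m$ scenarios with $\psi_s(\bs{x}^n)=0$. That count does not see $K$. An index $s\in K$ only has $\bar y_s>0$, which is compatible with $\psi_s(\bar{\bs{x}})=0=\psi_s(\bs{x}^n)$, so those zero scenarios may partly lie in $K$. Also, one index with $z^n_s<1$ need not provide room $\sum_{s\in K}z^n_s$.

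What you actually need is $|\{s\notin K:\psi_s(\bs{x}^n)=0\}|\ge S-m$. This comes from the convergence $\bs{z}^n\to\bar{\bs{z}}$, not from counting. Let $P:=\{s:\bar z_s>0\}$:
\begin{itemize}
\item By complementarity, $P\cap K=\emptyset$.
\item Since $\bar z_s\le 1$ and $\bs{1}^\top\bar{\bs{z}}\ge S-m$, we have $|P|\ge S-m$.
\item For all large $n$, $z^n_s\to\bar z_s>0$ forces $z^n_s>0$, hence $\psi_s(\bs{x}^n)=0$ for every $s\in P$.
\end{itemize}
With this, no shifting or case split is needed. Keep $\bar{\bs{z}}$ itself as the $\bs{z}$-component, and set $y^n_s:=\psi_s(\bs{x}^n)$ for $s\notin K$ and $y^n_s:=\max\{\bar y_s,\psi_s(\bs{x}^n)\}$ for $s\in K$. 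Then $(\bs{x}^n,\bs{y}^n,\bar{\bs{z}})\in\Omega$ for large $n$ and converges to $(\bar{\bs{x}},\bar{\bs{y}},\bar{\bs{z}})$, which gives your contradiction.

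Your fallback does not rescue the step either. It takes the nearest element of $Z(\bs{x}^n)$ in the face $\{z_s=0,\ s\in K\}$, but lower semicontinuity only supplies approximants of $\bar{\bs{z}}$ inside $Z(\bs{x}^n)$, not inside that face.

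For comparison, the paper avoids your obstacle with a two-stage argument. It first shows that $(\bar{\bs{x}},\bs{\psi}(\bar{\bs{x}}),\bar{\bs{z}})$ is again a lifted local minimizer: it freezes $\bs{z}=\bar{\bs{z}}$ and rebuilds $\bs{y}$ along a hypothetical improving sequence, with feasibility on $P$ coming from that sequence's complementarity. It then takes $\bs{y}(\bs{x})=\bs{\psi}(\bs{x})$, which converges to $\bs{\psi}(\bar{\bs{x}})$ automatically, together with $\bs{z}(\bs{x})\in Z(\bs{x})$ from lower semicontinuity. So $\bs{z}$ never has to be modified. Your patched argument is a single contradiction in which lower semicontinuity is used only to guarantee $g_s(\bs{x}^n)\le 0$ on $P$. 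It is shorter and isolates exactly what the hypothesis is used for.
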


\begin{proof}
Define $\tilde{\bs{y}}\in\Re^S$ by $\tilde{y}_s=\psi_s(\bar{\bs{x}})$ for all $s\in[S]$.
We first show that $(\bar{\bs{x}},\tilde{\bs{y}},\bar{\bs{z}})\in\Omega$ and is also a local minimizer of Problem~\eqref{p:ccp-bilinear}.

Since $(\bar{\bs{x}},\bar{\bs{y}},\bar{\bs{z}})\in\Omega$, we have
$g_s(\bar{\bs{x}})\le \bar{y}_s$, $\bar{y}_s\ge 0$, and $\bar{y}_s\bar{z}_s=0$ for all $s\in[S]$.
Hence $\tilde{y}_s=[g_s(\bar{\bs{x}})]_+\le \bar{y}_s$, so $g_s(\bar{\bs{x}})\le \tilde{y}_s$ and
$\tilde{y}_s\ge 0$ for all $s$. Since $\tilde{\bs{y}}\le \bar{\bs{y}}$, $\tilde{\bs{y}}\ge \bs{0}$, and
$\bar{\bs{y}}^\top\bar{\bs{z}}=0$, we get $\tilde{\bs{y}}^\top\bar{\bs{z}}=0$. Thus
$(\bar{\bs{x}},\tilde{\bs{y}},\bar{\bs{z}})\in\Omega$.

Suppose, to the contrary, that $(\bar{\bs{x}},\tilde{\bs{y}},\bar{\bs{z}})$ is not a local minimizer of
Problem~\eqref{p:ccp-bilinear}. Then there exists a sequence
$\{(\bs{x}^k,\bs{y}^k,\bs{z}^k)\}\subset\Omega$ such that
$(\bs{x}^k,\bs{y}^k,\bs{z}^k)\to(\bar{\bs{x}},\tilde{\bs{y}},\bar{\bs{z}})$ and
$f(\bs{x}^k)<f(\bar{\bs{x}})$ for all $k$.

Partition $[S]$ into
\[
I^+:=\{s:\bar{z}_s>0\},\qquad
I^{0,+}_{>} := \{s:\bar{z}_s=0,\ \bar{y}_s>\psi_s(\bar{\bs{x}})\},
\]
\[
I^{0,+}_{=} := \{s:\bar{z}_s=0,\ \bar{y}_s=\psi_s(\bar{\bs{x}})>0\},\qquad
I^{0,0}:=\{s:\bar{z}_s=0,\ \bar{y}_s=0\}.
\]
For each $k$, define $\hat{\bs{y}}^k\in\Re^S$ by
\[
\hat{y}_s^k:=
\begin{cases}
0, & s\in I^+,\\[1mm]
\bar{y}_s, & s\in I^{0,+}_{>},\\[1mm]
[g_s(\bs{x}^k)]_+, & s\in I^{0,+}_{=}\cup I^{0,0}.
\end{cases}
\]
Then $(\bs{x}^k,\hat{\bs{y}}^k,\bar{\bs{z}})\in\Omega$ for all sufficiently large $k$:
if $s\in I^+$, then $\bar z_s>0$, so $z_s^k>0$ for all sufficiently large $k$; since
$(\bs{x}^k,\bs{y}^k,\bs{z}^k)\in\Omega$, $y_s^k\ge 0$, $z_s^k\ge 0$, and
$\sum_{t=1}^S y_t^k z_t^k=0$, hence $y_s^k z_s^k=0$ and therefore $y_s^k=0$.
Thus $g_s(\bs{x}^k)\le y_s^k=0=\hat y_s^k$.
If $s\in I^{0,+}_{>}$, then $\bar y_s>\psi_s(\bar{\bs{x}})=[g_s(\bar{\bs{x}})]_+\ge g_s(\bar{\bs{x}})$,
so by continuity of $g_s$, $g_s(\bs{x}^k)<\bar y_s=\hat y_s^k$ for all sufficiently large $k$.
If $s\in I^{0,+}_{=}\cup I^{0,0}$, then by definition
$\hat y_s^k=[g_s(\bs{x}^k)]_+\ge g_s(\bs{x}^k)$ and $\hat y_s^k\ge 0$.
In all cases, since $\bar z_s=0$ for $s\notin I^+$, we have $\hat y_s^k\bar z_s=0$ for every $s$.

Moreover, $\hat{\bs{y}}^k\to\bar{\bs{y}}$: for $s\in I^+$, both are $0$; for $s\in I^{0,+}_{>}$,
$\hat y_s^k=\bar y_s$; and for $s\in I^{0,+}_{=}\cup I^{0,0}$,
\[
\hat y_s^k=[g_s(\bs{x}^k)]_+\to[g_s(\bar{\bs{x}})]_+=\psi_s(\bar{\bs{x}})=\bar y_s.
\]
Thus $(\bs{x}^k,\hat{\bs{y}}^k,\bar{\bs{z}})\to(\bar{\bs{x}},\bar{\bs{y}},\bar{\bs{z}})$.
Since the objective of Problem~\eqref{p:ccp-bilinear} depends only on $\bs{x}$, we have
$f(\bs{x}^k)=f(\bs{x}^k,\hat{\bs{y}}^k,\bar{\bs{z}})<f(\bar{\bs{x}})$ for all $k$, contradicting the local
minimality of $(\bar{\bs{x}},\bar{\bs{y}},\bar{\bs{z}})$. Therefore,
$(\bar{\bs{x}},\tilde{\bs{y}},\bar{\bs{z}})$ is a local minimizer of Problem~\eqref{p:ccp-bilinear}.

Now let $r>0$ be such that
\[
f(\bs{x})\ge f(\bar{\bs{x}})
\quad \forall\,(\bs{x},\bs{y},\bs{z})\in\Omega
\text{ with }
\|(\bs{x},\bs{y},\bs{z})-(\bar{\bs{x}},\tilde{\bs{y}},\bar{\bs{z}})\|<r.
\]
Since $(\bar{\bs{x}},\tilde{\bs{y}},\bar{\bs{z}})\in\Omega$, we have
$\bar{\bs{z}}\in\C$ and $\sum_{s=1}^S\tilde{y}_s\bar{z}_s=0$. As
$\tilde{y}_s=\psi_s(\bar{\bs{x}})\ge 0$ for all $s$, it follows that
$\sum_{s=1}^S\psi_s(\bar{\bs{x}})\bar{z}_s=0$, so $\bar{\bs{z}}$ attains the minimum of
\[
\min\Bigl\{\sum_{s=1}^S\psi_s(\bar{\bs{x}})z_s:\ \bs{z}\in\C\Bigr\}.
\]
Hence $\bar{\bs{z}}\in Z(\bar{\bs{x}})$.

Fix $\varepsilon:=r/3$. Since $Z$ is lower semicontinuous at $\bar{\bs{x}}$ and
$\bar{\bs{z}}\in Z(\bar{\bs{x}})$, there exists $\rho_1>0$ such that for every
$\bs{x}\in\X$ with $\|\bs{x}-\bar{\bs{x}}\|<\rho_1$, one can choose
$\bs{z}(\bs{x})\in Z(\bs{x})$ satisfying $\|\bs{z}(\bs{x})-\bar{\bs{z}}\|<\varepsilon$.
Since $\bs{\psi}$ is continuous, there exists $\rho_2>0$ such that
$\|\bs{\psi}(\bs{x})-\bs{\psi}(\bar{\bs{x}})\|<\varepsilon$ whenever
$\|\bs{x}-\bar{\bs{x}}\|<\rho_2$.

Set $\rho:=\min\{\rho_1,\rho_2,\varepsilon\}$ and take any
$\bs{x}\in\hat{\X}\cap B(\bar{\bs{x}},\rho)$. Let $\bs{y}(\bs{x}):=\bs{\psi}(\bs{x})$ and choose
$\bs{z}(\bs{x})\in Z(\bs{x})$ as above. Since $\bs{x}\in\hat{\X}$, the optimal value of
\[
\min\Bigl\{\sum_{s=1}^S\psi_s(\bs{x})z_s:\ \bs{z}\in\C\Bigr\}
\]
is $0$, so $\sum_{s=1}^S y_s(\bs{x})z_s(\bs{x})=0$. Also,
$g_s(\bs{x})\le[g_s(\bs{x})]_+=y_s(\bs{x})$ and $y_s(\bs{x})\ge 0$ for all $s$, hence
$(\bs{x},\bs{y}(\bs{x}),\bs{z}(\bs{x}))\in\Omega$.

Furthermore,
\[
\|(\bs{x},\bs{y}(\bs{x}),\bs{z}(\bs{x}))-(\bar{\bs{x}},\tilde{\bs{y}},\bar{\bs{z}})\|
\le \|\bs{x}-\bar{\bs{x}}\|+\|\bs{y}(\bs{x})-\tilde{\bs{y}}\|+\|\bs{z}(\bs{x})-\bar{\bs{z}}\|
<\varepsilon+\varepsilon+\varepsilon=r.
\]
Therefore, by the local minimality of $(\bar{\bs{x}},\tilde{\bs{y}},\bar{\bs{z}})$,
\[
f(\bs{x})=f(\bs{x},\bs{y}(\bs{x}),\bs{z}(\bs{x}))
\ge f(\bar{\bs{x}},\tilde{\bs{y}},\bar{\bs{z}})
= f(\bar{\bs{x}}).
\]
Since $\bs{x}\in\hat{\X}\cap B(\bar{\bs{x}},\rho)$ was arbitrary, $\bar{\bs{x}}$ is a local minimizer of
Problem~\eqref{p:ccp-dc}.
\end{proof}

A simple sufficient condition for the lower semicontinuity of $Z(\bs{x})$ at $\bar{\bs{x}}$ is that it is a singleton, which has been proved in~\cite{burdakov2016mathematical}. In our setting, a standard sufficient condition ensuring that $Z(\bar{\bs{x}})$ is a singleton is that the threshold exhibits a strict gap, namely
\[
\psi_{(S-m)}(\bar{\bs{x}})=0<\psi_{(S-m+1)}(\bar{\bs{x}}),
\]
which implies that the set of $(S-m)$ smallest components of $\{\psi_1(\bar{\bs{x}})\}^S_{s=1}$ is uniquely determined and thus the set-valued mapping $Z(\bar{\bs{x}})$ is a singleton.

Finally, we emphasize that the lower semicontinuity of $Z$ is only a sufficient condition: spurious local minima may occur at isolated lifted points, but not every isolated lifted local minimizer is spurious in the sense of having an $\bs{x}$-component that fails to be locally optimal for the DC formulation.

\begin{example} 
Let us revisit Example~\ref{exa:failure-Q2P} but replace the objective by $f(x)=x^2$. Since $\phi(x)\equiv 0$ on $\X$,
the DC formulation~\eqref{p:ccp-dc} becomes $\min\{x^2:x\ge -1\}$, so $x=0$ is a global, hence local minimizer. On the other hand, the lifted point $(\bar{x},\bar{\bs{y}},\bar{\bs{z}})=(0,(0,0),(1,1))$ remains feasible for~\eqref{p:ccp-bilinear} and is again an isolated local minimizer in the lifted space by the same neighborhood argument as in Example~\ref{exa:failure-Q2P}. In this case, however, the lifted local minimizer is not spurious: its $\bs{x}$-component $\bar{\bs{x}}=0$ is also a local minimizer of the primal formulation, even though $Z(\cdot)$ is still not lower semicontinuous at $0$.
\end{example}

\section{Numerical Experiments}
\label{sec:numerical}

In this section, we conduct experiments to test the performance of the proposed algorithms in Section~\ref{sec:alg} on both real and synthetic datasets. For ease of reference, we denote our proposed Algorithm~\ref{alg:penalty-dc-primal} by PenDC-P(rimal) and Algorithm~\ref{alg:penalty-dc-lifted} by PenDC-L(ifted). We compare our algorithms with some other state-of-the-art methods, including the CVaR approximation~\cite{rockafellar2002conditional,nemirovski2007convex}, mixed-integer formulation~\eqref{p:ccp-mip} (MIP) proposed in~\cite{ahmed2008solving}, a DC based algorithm (DCA) in~\cite{wang2025proximal}, and the bisection-based approximation algorithm ALSO-X+ proposed in~\cite{jiang2022also}. For the MIP formulation, we directly solve the following mixed-integer program:
\begin{equation} \label{p:ccp-mip}
v^* =  \min_{\bs{x} \in \X, \bs{z}\in \{0,1\}^S} \left\{ f(\bs{x}): \sum^S_{s=1}z_s \geq S-m, \; g_s(\bs{x}) \leq (1-z_s)M_s, \;\forall s \in [S]  \right\},
\end{equation}

In particular, we use Gurobi (v12.0.3) to solve all linear, quadratic, and mixed-integer (sub)problems. We set a time limit of 600 seconds for all the MIP runs with the default optimality gap tolerance of 0.01\%. For instances that cannot be solved to optimality within the time limit, we use ``gap" to denote averaged optimality gap as $\textrm{gap} (\%)=(| \textrm{UB} -\textrm{LB} |)/(|\textrm{LB}|)\times 100$ among the instances. We use ``fval" to denote averaged returned objective values of the instances, ``time" to denote the averaged CPU time (in seconds) and ``prob" to denote the average probability of the chance constraint. We use ``/" to indicate that the algorithm fails to provide a valid feasible solution for at least one of the instances. For each setting, we generate five random instances and report the average performance over these instances. The row ``solved" reports the number of instances returned with a feasible solution among those five instances.

For the outer loop of the proposed method, we terminate the algorithm when the sample based chance constraint is satisfied. For the inner loop of the proposed methods and all the test methods except ALSO-X+, we terminate the algorithm when $|f^k - f^{k+1}|/\max\{1,|f^{k+1}|\} \leq 10^{-6}$ for $k = 0, 1, 2\cdots$. For PenDC-P and PenDC-L, the initial points are chosen randomly. For the first two outer iterations of the two penalty methods, we terminate the inner loop after 1 and 2 iterations, respectively, as a warm start. For ALSO-X+, we terminate the algorithm when the difference between the upper bound and the lower bound of the objective value $|t^U - t^L| \leq 10^{-6}$. All numerical experiments in this section are implemented in Python 3.9.21 and executed on a Linux server equipped with 256 GB RAM and
a 96-core AMD EPYC 7402 CPU running at 2.8 GHz.

\subsection{A VaR-constrained Portfolio Optimization Problem}

In this section, we consider a Value-at-Risk (VaR) constrained mean--variance portfolio selection model studied in~\cite{bai2021augmented, wang2025proximal}. Let $\bs{\mu}\in\Re^n$ and $\boldsymbol{\Sigma}\in\Re^{n\times n}$ denote the estimated mean vector and covariance matrix of returns for $n$ risky assets, and let $\gamma>0$ be a risk-aversion parameter. Let $\bs{x}\in\Re^n_{+}$ denote the portfolio weight vector. The problem can be formulated as follows:
\[
\min_{\bs{x}\in\Re^n}\left\{\gamma \bs{x}^\top \boldsymbol{\Sigma}\bs{x} - \bs{\mu}^\top \bs{x}: \Pr\{\txi^\top \bs{x} \geq R\} \geq 1-\alpha, \sum^n_{i=1}x_i = 1, 0 \leq x_i \leq u_i, \; i\in[n]\right\}.
\]
The problem can be interpreted as follows: the objective minimizes a quadratic variance penalty minus expected return, subject to a chance constraint requiring that the realized portfolio return $\boldsymbol{\xi}^\top \bs{x}$ exceeds a prespecified target $R$ with probability at least $1-\alpha$. In addition, we impose the budget constraint $\sum_{i=1}^n x_i=1$ and box constraints $0\le x_i\le u_i$ for each $i\in[n]$ to avoid overconcentration.

To construct test instances from real data, we use a dataset of 2523 daily returns of 435 stocks in the S\&P 500 index over March 2006 to March 2016, which can be downloaded from \url{https://github.com/INFORMSJoC/2024.0648}. Specifically, we consider four problem sizes $n\in\{100,200,300,400\}$ and set the sample size to $S=3n$. For each pair $(n,S)$, we run the algorithms on five independent instances and report the average performance metrics over these five instances. For the PenDC-L method, we set $\sigma_0 = 5\cdot 10^{-3}$, $\beta  = 4.0$, and $\rho = 10^{-4}$. For the PenDC-P method, we set $\sigma_0 = 3\cdot 10^{-3}$, $\beta  = 1.5$, and $\rho = 0$. 

\begin{table}[ht!]
\centering
\caption{Comparisons of the portfolio optimization problem.}
\label{tab:portfolio-avg}
\scriptsize
\setlength{\tabcolsep}{3pt}
\renewcommand{\arraystretch}{0.92}
\begin{tabular}{ccrrrrrr}
\toprule
$(\alpha,S)$ & & MIP & CVaR & PenDC-L & PenDC-P & DCA & ALSO-X+\\
\midrule
\multirow{4}{*}{$\left(\begin{array}{c} 0.05 \\ 300 \end{array}\right)$} & fval  & -0.013550 & -0.011785 &\textbf{ -0.013398} & -0.013053 & -0.012125 & -0.013097\\
& time(gap)  & 207.1164 (0.0\%) & 0.1093 & \textbf{0.1109} & 0.1936 & 0.4343 & 3.3849\\
& prob  & 0.9500 & 1.0000 & 0.9500 & 0.9507 & 0.9653 & 0.9500\\
& solved & 5/5 & 5/5 & 5/5 & 5/5 & 5/5 & 5/5\\
\midrule
\multirow{4}{*}{$\left(\begin{array}{c} 0.05 \\ 600 \end{array}\right)$} & fval  & -0.013508 & -0.011790 &\textbf{ -0.013379} & -0.012992 & -0.012527 & -0.013208\\
& time(gap)  & 600.0085 (4.8\%) & 0.6746 & \textbf{0.3961 }& 0.9973 & 5.4026 & 18.6961\\
& prob  & 0.9500 & 1.0000 & 0.9507 & 0.9500 & 0.9620 & 0.9507\\
& solved & 5/5 & 5/5 & 5/5 & 5/5 & 5/5 & 5/5\\
\midrule
\multirow{4}{*}{$\left(\begin{array}{c} 0.05 \\ 900 \end{array}\right)$} & fval  & -0.013428 & -0.011697 &\textbf{ -0.013382} & -0.012817 & -0.012850 & -0.013144\\
& time(gap)  & 600.0144 (8.7\%) & 1.9169 & \textbf{0.9956} & 2.5666 & 33.2572 & 58.1248\\
& prob  & 0.9500 & 1.0000 & 0.9504 & 0.9500 & 0.9182 & 0.9504\\
& solved & 5/5 & 5/5 & 5/5 & 5/5 & 5/5 & 5/5\\
\midrule
\multirow{4}{*}{$\left(\begin{array}{c} 0.05 \\ 1200 \end{array}\right)$} & fval  & -0.013557 & -0.011825 & \textbf{-0.013597} & -0.013247 & -0.013090 & -0.013273\\
& time(gap)  & 600.0488 (10.6\%) & 4.7453 & \textbf{2.2373} & 7.1035 & 83.0278 & 155.2660\\
& prob  & 0.9504 & 1.0000 & 0.9507 & 0.9505 & 0.9248 & 0.9503\\
& solved & 5/5 & 5/5 & 5/5 & 5/5 & 5/5 & 5/5\\
\midrule\midrule
\multirow{4}{*}{$\left(\begin{array}{c} 0.10 \\ 300 \end{array}\right)$} & fval  & -0.014429 & -0.011785 & \textbf{-0.014281} & -0.014138 & -0.013522 & -0.014108\\
& time(gap)  & 36.5684 (0.0\%) & 0.1085 & \textbf{0.1002} & 0.1781 & 1.1241 & 3.3367\\
& prob  & 0.9000 & 1.0000 & 0.9060 & 0.9007 & 0.9020 & 0.9007\\
& solved & 5/5 & 5/5 & 5/5 & 5/5 & 5/5 & 5/5\\
\midrule
\multirow{4}{*}{$\left(\begin{array}{c} 0.10 \\ 600 \end{array}\right)$} & fval  & -0.014213 & -0.011790 & \textbf{-0.014151} & -0.013986 & -0.013718 & -0.014073\\
& time(gap)  & 600.0077 (1.4\%) & 0.6771 & \textbf{0.3166} & 0.8218 & 11.4020 & 17.5378\\
& prob  & 0.9003 & 1.0000 & 0.9057 & 0.9000 & 0.8953 & 0.9000\\
& solved & 5/5 & 5/5 & 5/5 & 5/5 & 5/5 & 5/5\\
\midrule
\multirow{4}{*}{$\left(\begin{array}{c} 0.10 \\ 900 \end{array}\right)$} & fval  & -0.014347 & -0.011697 & \textbf{-0.014272} & -0.013964 & -0.013831 & -0.014196\\
& time(gap)  & 600.0097 (3.1\%) & 1.9063 & \textbf{0.8954} & 2.3002 & 43.9097 & 54.7558\\
& prob  & 0.8998 & 1.0000 & 0.9058 & 0.9000 & 0.9042 & 0.9007\\
& solved & 5/5 & 5/5 & 5/5 & 5/5 & 5/5 & 5/5\\
\midrule
\multirow{4}{*}{$\left(\begin{array}{c} 0.10 \\ 1200 \end{array}\right)$} & fval  & -0.014558 & -0.011825 & \textbf{-0.014455} & -0.014448 & -0.014253 & -0.014453\\
& time(gap)  & 600.0161 (3.6\%) & 4.7528 & \textbf{1.9307} & 6.4855 & 154.4464 & 141.1504\\
& prob  & 0.9015 & 1.0000 & 0.9107 & 0.9000 & 0.8973 & 0.9013\\
& solved & 5/5 & 5/5 & 5/5 & 5/5 & 5/5 & 5/5\\
\bottomrule
\end{tabular}
\end{table}

The results in Table~\ref{tab:portfolio-avg} demonstrate that PenDC-L performs competitively in terms of solution quality and computational efficiency on the test problems.
Across all tested combinations of $(\alpha,S)$, PenDC-L produces objective values that are nearly identical to the MIP benchmark whenever the MIP is solved to optimality, while requiring orders-of-magnitude less runtime and remaining stable as $n$ increases. In contrast, the CVaR approximation is fast but systematically more conservative, as reflected by its empirical satisfaction probabilities being close to $1$ in all instances. This conservatism directly translates into a noticeable objective gap. Compared with the DCA, PenDC-P can improve the performance of DCA by producing a feasible solution with lower objective value and shorter running time. For the ALSO-X+ method, we can see that it can return a feasible solution with the lowest objective value among the remaining methods. However, this comes at the cost of substantially longer runtime.

\subsection{A Probabilistic Resource Planning Problem}

We next consider a chance constrained linear resource planning problem, which has been studied in~\cite{luedtke2010integer, bai2021augmented}. The model concerns shipping goods from $n$ suppliers to $m$ customers at minimum total transportation cost, where customer demands are uncertain. Specifically, the demand of customer $j\in[m]$ is modeled by a random variable $\xi_j$, while each supplier $i\in[n]$ is subject to a deterministic capacity limit $\theta_i>0$. Let $c_{ij}\ge 0$ denote the unit shipping cost from supplier $i$ to customer $j$, and let $x_{ij}\ge 0$ be the amount shipped along arc $(i,j)$. The shipment plan is decided in advance of demand realization, and feasibility is enforced in a probabilistic sense by requiring that all customer demands are met simultaneously with probability at least $(1-\alpha)$. The resulting formulation is
\begin{align*}
\min_{\bs{x}\in\Re^{n\times m}} \left\{ \sum_{i=1}^n \sum_{j=1}^m c_{ij}x_{ij}:
 \begin{array}{l}
\displaystyle \Pr\left\{\sum_{i=1}^n x_{ij}\ge \xi_j,\ \forall j\in[m]\right\}\ge 1-\alpha, \\
\displaystyle \sum_{j=1}^m x_{ij}\le \theta_i,\ \forall i\in[n],
\quad x_{ij}\ge 0, \quad \forall i\in[n], \;j\in[m]
 \end{array}\right\}.
\end{align*}

We use the public dataset provided at \url{http://homepages.cae.wisc.edu/~luedtkej/}. Throughout, we let $(n,m)\in\{(40,100), (40, 200)\}$ and consider three sample sizes $S\in\{1000,2000, 3000\}$. For both the PenDC-L and PenDC-P methods, we set $\sigma_0 = 5$, $\beta  = 4.5$, and $\rho = 10^{-3}$.

\begin{table}[ht!]
\centering
\caption{Comparisons of the probabilistic resource planning problem.}
\label{tab:resource-avg}
\scriptsize
\setlength{\tabcolsep}{3pt}
\renewcommand{\arraystretch}{0.92}
\begin{tabular}{ccrrrrrr}
\toprule
$(\alpha,n,S)$ & & MIP & CVaR & PenDC-L & PenDC-P & DCA & ALSO-X+\\
\midrule
\multirow{4}{*}{$\left(\begin{array}{c} 0.05 \\ 100 \\ 1000 \end{array}\right)$} & fval  & 4.1309 & / & \textbf{4.1422} & 4.2554 & 4.2560 & 4.1754\\
& time(gap)  & 143.8471 (0.0\%)  & / & \textbf{19.2711} & 25.2524 & 22.7656 & 67.8204\\
& prob  & 0.9500 & / & 0.9502 & 0.9500 & 0.9500 & 0.9500\\
& solved & 5/5 & 4/5 & 5/5 & 5/5 & 5/5 & 5/5\\
\midrule
\multirow{4}{*}{$\left(\begin{array}{c} 0.05 \\ 100 \\ 2000 \end{array}\right)$} & fval  & 4.4168 & 4.6644 & \textbf{4.4264} & 4.6088 & 4.6083 & 4.4578\\
& time(gap)  & 429.6553 (0.0\%) & 1.8925 & 54.8647 & 59.5165 & \textbf{46.3589} & 189.1209\\
& prob  & 0.9500 & 1.0000 & 0.9500 & 0.9500 & 0.9500 & 0.9503\\
& solved & 5/5 & 5/5 & 5/5 & 5/5 & 5/5 & 5/5\\
\midrule
\multirow{4}{*}{$\left(\begin{array}{c} 0.05 \\ 200 \\ 2000 \end{array}\right)$} & fval  & 8.5613 & / & \textbf{8.3748} & 8.6736 & / & 8.4288\\
& time(gap)  & 600.2509 (6.4\%) & / & \textbf{204.4876} & 457.3059 & / & 645.4984\\
& prob  & 0.9638 & / & 0.9500 & 0.9278 & / & 0.9501\\
& solved & 5/5 & 4/5 & 5/5 & 5/5 & 4/5 & 5/5\\
\midrule
\multirow{4}{*}{$\left(\begin{array}{c} 0.05 \\ 200 \\ 3000 \end{array}\right)$} & fval  & 8.8224 & / & \textbf{8.5996} & 8.9440 & 8.9440 & 8.6534\\
& time(gap)  & 600.3817 (12.6\%) & / & 334.5961 & 220.8843 & \textbf{165.6295} & 1302.5201\\
& prob  & 0.9654 & / & 0.9500 & 0.9500 & 0.9500 & 0.9500\\
& solved & 5/5 & 4/5 & 5/5 & 5/5 & 5/5 & 5/5\\
\midrule\midrule
\multirow{4}{*}{$\left(\begin{array}{c} 0.10 \\ 100 \\ 1000 \end{array}\right)$} & fval  & 4.0540 & / & \textbf{4.0663} & 4.2306 & 4.2306 & 4.0847\\
& time(gap)  & 406.5311 (0.0\%)  & / & \textbf{19.3168} & 25.8939 & 25.8532 & 81.7118\\
& prob  & 0.9000 & / & 0.9000 & 0.9000 & 0.9000 & 0.9000\\
& solved & 5/5 & 4/5 & 5/5 & 5/5 & 5/5 & 5/5\\
\midrule
\multirow{4}{*}{$\left(\begin{array}{c} 0.10 \\ 100 \\ 2000 \end{array}\right)$} & fval  & 4.3221 & 4.6644 & \textbf{4.3318} & 4.5787 & 4.5800 & 4.3581\\
& time(gap)  & 600.3751 (0.6\%)  & 1.8589 & 52.9770 & 62.6593 & \textbf{50.3822} & 235.6824\\
& prob  & 0.9004 & 1.0000 & 0.9002 & 0.9000 & 0.9000 & 0.9000\\
& solved & 5/5 & 5/5 & 5/5 & 5/5 & 5/5 & 5/5\\
\midrule
\multirow{4}{*}{$\left(\begin{array}{c} 0.10 \\ 200 \\ 2000 \end{array}\right)$} & fval  & 8.4548 & / & \textbf{8.2194} & / & / & 8.2649\\
& time(gap)  & 600.2532 (12.4\%) & / & \textbf{220.6448} & / & / & 844.3599\\
& prob  & 0.9079 & / & 0.9002 & / & / & 0.9001\\
& solved & 5/5 & 4/5 & 5/5 & 4/5 & 4/5 & 5/5\\
\midrule
\multirow{4}{*}{$\left(\begin{array}{c} 0.10 \\ 200 \\ 3000 \end{array}\right)$} & fval  & 8.7236 & / & \textbf{8.4273} & 8.8973 & 8.8974 & 8.4733\\
& time(gap)  & 602.9494 (18.0\%) & / & 352.0631 & 235.6632 & \textbf{191.6438} & 1388.0507\\
& prob  & 0.9171 & / & 0.9001 & 0.9000 & 0.9000 & 0.9001\\
& solved & 5/5 & 4/5 & 5/5 & 5/5 & 5/5 & 5/5\\
\bottomrule
\end{tabular}
\vspace{2pt}
\begin{flushleft}
\footnotesize \qquad *The magnitude of fval is $10^{7}$. \end{flushleft}
\end{table}

Table~\ref{tab:resource-avg} highlights several distinctive computational features of the compared methods on the resource planning problem.
Among the iterative approaches, DCA is often the fastest whenever it succeeds, because DCA has a finite termination guarantee in the polyhedral setting, and the number of iterations is smaller than that of the penalty based methods. In this numerical setting, the DCA method usually converges within only 3-5 iterations.
However, Table~\ref{tab:resource-avg} also shows that both DCA and the CVaR approximation fail to provide a valid solution under some instances, which is consistent with the fact that both reformulations may become overly conservative and therefore infeasible. 
In comparison, the PenDC-P method can sometimes alleviate infeasibility caused by overly conservative subproblems,
and can often return a lower objective value with slightly longer running time compared with the DCA method, despite the fact that the objective value is still larger than the ALSO-X+.
Finally, we can see that PenDC-L outperforms all the other methods in terms of the objective value since it returns the lowest objective, even compared with ALSO-X+. Moreover, its running time remains comparable to the DCA due to an efficient warm-start strategy.

\subsection{Linear Objective with Joint Quadratic Chance Constraint}

We further test our algorithm on a problem with a linear objective and a joint nonlinear (convex) chance constraint, which has been used as a benchmark in~\cite{hong2011sequential, wang2025proximal}. The decision variable is $\bs{x}\in\Re^d_{+}$ and the objective is to minimize the negative sum of components. The feasibility requirement is imposed through a joint quadratic chance constraint of the form
\[
\min_{\bs{x}\in\Re^d_{+}} \left\{ -\sum_{i=1}^d x_i
:
\Pr\left\{\sum_{i=1}^d \xi_{ij}^2 x_i^2 \le \theta,\ \forall j\in[m]\right\}\ \ge\ 1-\alpha\right\},
\]
where the random coefficients $\{\xi_{ij}\}_{i\in[d],\,j\in[m]}$ follow a dependent Gaussian structure. 

In particular, for each $j$, the vector $(\xi_{1j},\ldots,\xi_{dj})$ is multivariate normal with $\mathbb{E}[\xi_{ij}]=j/d$ and $\mathrm{Var}(\xi_{ij})=1$, and the within-$j$ correlations satisfy $\mathrm{Cov}(\xi_{ij},\xi_{i'j})=0.5$ for $i\neq i'$. Across different indices $j\neq j'$, the corresponding random variables are independent, i.e., $\mathrm{Cov}(\xi_{ij},\xi_{i'j'})=0$. In the numerical study, we fix $d=20$, $m=20$, and $\theta=100$, and we vary the sample size $S$. Specifically, we consider $S\in\{500,1000,2000\}$. For the PenDC-P method, we let $\sigma^0 = 4\cdot10^{-3}$, $\beta =15$ and $\rho = 0$. For the PenDC-L method, we let $\sigma^0 = 8\cdot10^{-5}$, $\beta =10$ and $\rho = 10^{-3}$. For the first two outer iterations of the two penalty methods, we terminate the inner loop after 1 and 2 iterations, respectively, as a warm start. We also set a time limit of 600 seconds for the DCA method.

\begin{table}[ht!]
\centering
\caption{Comparisons of the norm optimization problem.}
\label{tab:nonlinear-avg}
\scriptsize
\setlength{\tabcolsep}{3pt}
\renewcommand{\arraystretch}{0.92}
\begin{tabular}{ccrrrrrr}
\toprule
$(\alpha,S)$ & & MIP & CVaR & PenDC-L & PenDC-P & DCA & ALSO-X+\\
\midrule
\multirow{4}{*}{$\left(\begin{array}{c} 0.05 \\ 500 \end{array}\right)$} & fval  & -16.5403 & -14.9226 & \textbf{-16.5263} & -15.4856 & / & -16.5106\\
& time(gap)  & 545.4053 (2.2\%) & 8.6787 & \textbf{43.0250} & 607.0177 & / & 220.0617\\
& prob  & 0.9500 & 0.9792 & 0.9500 & 0.9500 & / & 0.9500\\
& solved & 5/5 & 5/5 & 5/5 & 5/5 & 4/5 & 5/5\\
\midrule
\multirow{4}{*}{$\left(\begin{array}{c} 0.05 \\ 1000 \end{array}\right)$} & fval  & -16.3881 & -15.0186 & \textbf{-16.3725} & / & / & -16.3646\\
& time(gap)  & 600.1338 (24.6\%) & 20.4617 & \textbf{93.8265} & / & / & 559.3773\\
& prob  & 0.9500 & 0.9820 & 0.9500 & / & / & 0.9500\\
& solved & 5/5 & 5/5 & 5/5 & 0/5 & 4/5 & 5/5\\
\midrule
\multirow{4}{*}{$\left(\begin{array}{c} 0.05 \\ 2000 \end{array}\right)$} & fval  & -16.0655 & -14.8409 & \textbf{-16.1215} & / & -16.0585 & -16.1146\\
& time(gap)  & 600.7272 (50.1\%) & 48.9682 & \textbf{252.3422} & / & 626.9383 & 1289.8408\\
& prob  & 0.9506 & 0.9808 & 0.9502 & / & 0.9500 & 0.9501\\
& solved & 5/5 & 5/5 & 5/5 & 0/5 & 5/5 & 5/5\\
\midrule\midrule
\multirow{4}{*}{$\left(\begin{array}{c} 0.10 \\ 500 \end{array}\right)$} & fval  & -17.4619 & -15.7353 & -17.4138 & -15.6822 & -17.3876 & \textbf{-17.4291}\\
& time(gap)  & 600.0095 (22.9\%) & 7.6620 & \textbf{42.2942} & 621.9502 & 590.9415 & 236.4814\\
& prob  & 0.9000 & 0.9620 & 0.9000 & 0.8992 & 0.9000 & 0.9000\\
& solved & 5/5 & 5/5 & 5/5 & 5/5 & 5/5 & 5/5\\
\midrule
\multirow{4}{*}{$\left(\begin{array}{c} 0.10 \\ 1000 \end{array}\right)$} & fval  & -17.3869 & -15.7922 & \textbf{-17.3578} & / & -17.2949 & -17.3490\\
& time(gap)  & 601.3015 (55.3\%) & 18.5244 & \textbf{94.8007} & / & 614.2368 & 611.8454\\
& prob  & 0.9000 & 0.9638 & 0.9006 & / & 0.9000 & 0.9004\\
& solved & 5/5 & 5/5 & 5/5 & 0/5 & 5/5 & 5/5\\
\midrule
\multirow{4}{*}{$\left(\begin{array}{c} 0.10 \\ 2000 \end{array}\right)$} & fval  & -17.1944 & -15.6308 & \textbf{-17.1653} & / & -17.1227 & -17.1643\\
& time(gap)  & 608.1971 (63.3\%) & 50.8092 & \textbf{258.9725} & / & 634.5459 & 1453.3697\\
& prob  & 0.9001 & 0.9636 & 0.9003 & / & 0.9000 & 0.9002\\
& solved & 5/5 & 5/5 & 5/5 & 0/5 & 5/5 & 5/5\\
\bottomrule
\end{tabular}
\vspace{2pt}
\end{table}

From Table~\ref{tab:nonlinear-avg}, we can see that both the DCA and the PenDC-P methods exhibit long running times due to the severe oscillation in the updates. Moreover, the PenDC-P method fails to provide a feasible solution in most of the higher-dimensional instances, and the DCA sometimes violates the empirical chance constraint, which reflects their limitations in such settings. 
In contrast, the PenDC-L maintains stability in runtimes while achieving the lowest objective values in most settings among the iterative methods. Compared with the ALSO-X+, the PenDC-L method returns a lower objective value with considerably shorter running time, highlighting its scalability.

\section{Conclusion}
\label{sec:conclusion}

In this paper, we developed penalty based DC algorithms for the SAA approximation of chance constrained programs in a convex setting. Using a rank-based DC representation of the empirical chance constraint, we proposed a primal-space method that avoids feasible initialization and a numerically more stable lifted formulation with a finite termination guarantee. We established exact penalty and stationarity guarantees for both formulations under mild constraint qualification assumptions. Numerical studies demonstrated the efficiency of the proposed methods. Future works include extending the method to distributionally robust chance constrained programs (DRCCPs) under Wasserstein ambiguity sets~\cite{xie2021distributionally, chen2024data}, and chance constrained programs with linear conic inequality constraints~\cite{van2024inner}.

\bibliographystyle{plain}
\bibliography{reference}

\appendix

\section{Omitted Proofs in Section~\ref{sec:priliminaries}}
\label{sec:proofs}

\noindent\textbf{Proof of Proposition~\ref{prop:prelim-gmfcq-mr}}
\begin{proof}
We start by recalling the standard representation of the Clarke directional derivative
(see, e.g., \cite[Proposition~2.1.2]{clarke1990optimization}):
\begin{equation}\label{eq:clarke-dd-support}
g^\circ(\bar{\bs{x}};\bs{d})\;=\;\max_{\bs{v}\in\partial g(\bar{\bs{x}})}\ \langle \bs{v},\bs{d}\rangle.
\end{equation}

\smallskip
\noindent\textbf{Step 1: GMFCQ $\Longrightarrow$ $0\notin \partial g(\bar{\bs{x}})+\N_{\X}(\bar{\bs{x}})$.}
Assume GMFCQ holds at $\bar{\bs{x}}$, i.e., there exists $\bs{d}\in\T_{\X}(\bar{\bs{x}})$ such that
$g^\circ(\bar{\bs{x}};\bs{d})<0$. By~\eqref{eq:clarke-dd-support}, this implies
$\langle \bs{v},\bs{d}\rangle<0$ for all $\bs{v}\in\partial g(\bar{\bs{x}})$.
If $0\in \partial g(\bar{\bs{x}})+\N_{\X}(\bar{\bs{x}})$, then there exist
$\bs{v}\in\partial g(\bar{\bs{x}})$ and $\bs{z}\in\N_{\X}(\bar{\bs{x}})$ with $\bs{v}+\bs{z}=0$.
Since $\bs{d}\in\T_{\X}(\bar{\bs{x}})$ and $\bs{z}\in\N_{\X}(\bar{\bs{x}})$, we have $\langle \bs{z},\bs{d}\rangle\le 0$ and hence
$\langle \bs{v},\bs{d}\rangle=-\langle \bs{z},\bs{d}\rangle\ge 0$, a contradiction.
Therefore $0\notin \partial g(\bar{\bs{x}})+\N_{\X}(\bar{\bs{x}})$.

\smallskip
\noindent\textbf{Step 2: $0\notin \partial g(\bar{\bs{x}})+\N_{\X}(\bar{\bs{x}})$ $\Longrightarrow$ metric regularity.}
Define
\[
\ker\!\bigl([\partial g(\bar{\bs{x}}), I_d]\bigr)
:=\bigl\{(\lambda,\bs{z})\in\Re\times\Re^d:\ 0\in \lambda\,\partial g(\bar{\bs{x}})+\bs{z}\bigr\}.
\]
Since $g(\bar{\bs{x}})=0$, we have $\N_{\Re_-}(g(\bar{\bs{x}}))=\N_{\Re_-}(0)=\Re_+$.
A direct verification shows that
\begin{equation*}
0\ \notin\ \partial g(\bar{\bs{x}})+\N_{\X}(\bar{\bs{x}})
\quad\Longleftrightarrow\quad
\ker\!\bigl([\partial g(\bar{\bs{x}}), I_d]\bigr)\ \cap\ \bigl(\N_{\Re_-}(g(\bar{\bs{x}}))\times \N_{\X}(\bar{\bs{x}})\bigr)
\;=\;\{(0,\bs{0})\},
\end{equation*}
which is the regularity condition \cite[Eq.~(2.8)]{burke1991exact} (see also \cite[Eq.~(33)]{borwein1986stability}).
Applying \cite[Thm.~2.4]{burke1991exact} (see also \cite[Thm.~3.2]{borwein1986stability}) to the set-valued mapping
$\mathcal{G}(\bs{x})=g(\bs{x})+\Re_+$, we conclude that $\mathcal{F}=\mathcal{G}^{-1}(0)$ is metrically regular at $\bar{\bs{x}}$.

Finally, metric regularity at $(\bar{\bs{x}},0)$ yields constants $\kappa>0$ and $\varepsilon>0$ such that
\[
\dist\bigl(\bs{x},\mathcal{G}^{-1}(0)\bigr)
\ \le\ \kappa\,\dist\bigl(0,\mathcal{G}(\bs{x})\bigr)
\ =\ \kappa [g(\bs{x})]_+,
\qquad \forall\,\bs{x}\in \X\cap B(\bar{\bs{x}},\varepsilon),
\]
which is exactly~\eqref{eq:prelim-error-bound}.
\end{proof}

\noindent\textbf{Proof of Theorem~\ref{thm:prelim-local-exact-penalty}}
\begin{proof}
\noindent\textbf{(i)} Let $\sigma>0$ and assume that $\bar{\bs{x}}$ is a local minimizer of~\eqref{p:prelim-penalty} with $g(\bar{\bs{x}})\le 0$.
Then $[g(\bar{\bs{x}})]_+=0$.
Take $r>0$ such that
\[
f(\bar{\bs{x}})+\sigma[g(\bar{\bs{x}})]_+
\le
f(\bs{x})+\sigma[g(\bs{x})]_+,
\qquad \forall\,\bs{x}\in \X\cap B(\bar{\bs{x}},r).
\]
For any $\bs{x}\in \mathcal{F}\cap B(\bar{\bs{x}},r)$, we have $[g(\bs{x})]_+=0$ and thus
$f(\bar{\bs{x}})\le f(\bs{x})$. Hence $\bar{\bs{x}}$ is a local minimizer of~\eqref{p:prelim-constrained}.

\medskip
\noindent\textbf{(ii)} Assume that $\bar{\bs{x}}$ is a local minimizer of~\eqref{p:prelim-constrained}.
Let $\kappa,\varepsilon$ be the constants in the error bound~\eqref{eq:prelim-error-bound}.
Fix any $\sigma>L_f\kappa$.
We show that $\bar{\bs{x}}$ locally minimizes~\eqref{p:prelim-penalty}.

Take any $\bs{x}\in \X\cap B(\bar{\bs{x}},\varepsilon)$ and choose $\tilde{\bs{x}}\in\mathcal{F}$ such that
$\|\bs{x}-\tilde{\bs{x}}\|=\dist(\bs{x},\mathcal{F})$.
By Lipschitzness of $f$ and the error bound,
\[
f(\tilde{\bs{x}})
\le
f(\bs{x})+L_f\|\tilde{\bs{x}}-\bs{x}\|
= f(\bs{x})+L_f\dist(\bs{x},\mathcal{F})
\le f(\bs{x})+L_f\kappa[g(\bs{x})]_+.
\]
Moreover, since $\bar{\bs{x}}$ is a local minimizer of~\eqref{p:prelim-constrained}, we have $f(\bar{\bs{x}})\le f(\tilde{\bs{x}})$ for all $\bs{x}$ sufficiently close to $\bar{\bs{x}}$.
Therefore, for such $\bs{x}$,
\[
f(\bar{\bs{x}})
\le
f(\bs{x})+L_f\kappa[g(\bs{x})]_+
\le
f(\bs{x})+\sigma[g(\bs{x})]_+.
\]
Since $[g(\bar{\bs{x}})]_+=0$, this shows that $\bar{\bs{x}}$ is a local minimizer of~\eqref{p:prelim-penalty}.
Taking $\bar\sigma:=L_f\kappa$ completes the proof.
\end{proof}

\end{document}